\DeclareRobustCommand{\pmod}{\@ifstar\@pmods\@@pmod}
\def\@pmods#1{\mkern4mu({\operator@font mod}\mkern 6mu#1)}
\newcommand\blfootnote[1]{%
  \begingroup
  \renewcommand{\@makefntext}[1]{\noindent\makebox[1.8em][r]#1}
  \renewcommand\thefootnote{}\footnote{#1}%
  \addtocounter{footnote}{-1}%
  \endgroup
}
\DeclareMathOperator{\meas}{meas}
\newtheorem{theorem}{Theorem}[section]
\newtheorem{lemma}{Lemma}[section]
\newtheorem{corollary}{Corollary}[section]
\begin{document}

\title{Prime Solutions of Diagonal Diophantine Systems}

\author{Alan Talmage\footnote{atalmage@semo.edu 
\newline Southeast Missouri State University, Cape Girardeau, Missouri, USA \newline 2020 Mathematics Subject Classification: 11P32, 11P55. \newline Keywords: Hardy-Littlewood method, diagonal systems, Waring-Goldbach problem, Diophantine systems.}}
%
%
%
%
%

\maketitle

\begin{abstract}

An asymptotic formula for the number of prime solutions of a general diagonal system of Diophantine equations is established, contingent on the existence of an appropriate mean value bound and on local solvability.  In conjunction with the Vinogradov Mean Value Theorem this yields an asymptotic formula for solutions of Vinogradov systems and in conjunction with Hooley's work on seven cubes this yields a conditional result for the Waring-Goldbach problem on seven cubes of primes, contingent on Hooley's form of the Riemann hypothesis.


\end{abstract}


\section{Introduction}

The proofs of Vinogradov's mean value theorem by Bourgain, Demeter, and Guth~\cite{bdg} and by Wooley~\cite{wooley_beyond_vmvt} have opened up new frontiers in finding integer solutions to systems of Diophantine equations.  In this work, we develop corresponding results regarding prime solutions of such systems, advancing the state of the Waring-Goldbach problem for systems of Diophantine equations to match that of Waring's problem in many cases.  In particular, following the progress of Brandes and Parsell~\cite{brandes_parsell}, we provide results for prime solutions of Diophantine systems that explicitly depend on the strength of available mean value bounds, permitting future progress on mean values to immediately update the results given here.  As an example of the progress achieved, we prove an asymptotic formula for the number of representations of a positive integer as a sum of seven cubes of primes conditional on a Riemann-type hypothesis of Hooley, matching Hooley's work~\cite{hooley} for seven cubes of integers.

Consider a general diagonal system of $t$ equations in $s$ prime variables $(p_1,$ $\ldots, p_s)$
\begin{equation}\label{the_system}
	\sum_{i=1}^s u_{ij}p_i^{k_j} = 0, \qquad (1 \leq j \leq t),
\end{equation}
where the $u_{ij}$ are nonzero integer coefficients and the $k_j$ are distinct positive integers.  Let 
\[
	k = \displaystyle\max_{1 \leq j \leq t} k_j,
\]
\[
	K = \displaystyle\sum_{j=1}^t k_j
\]
be the maximum degree and the total degree of the system, respectively.

Any theorem counting solutions to such a system necessarily depends on the strengths of the relevant mean value bounds.  Rather than restricting our study to the small number of cases where good mean value bounds are known, we will state a result conditional on the strength of the mean value bound.  Let $J_{\ell,\mathbf{k}}(P)$ be the number of integral solutions to the system
\[
	\sum_{i=1}^\ell x_i^{k_j} = \sum_{i=1}^\ell y_i^{k_j}, \qquad (1 \leq j \leq t)
\]
with $1 \leq x_i, y_i \leq P$.  Let $s_\varepsilon(\mathbf{k})$ be the least integer $\ell$ such that $J_{\ell, \mathbf{k}}(P) \ll P^{2\ell-K+\varepsilon}$.

The problem of determining when a system (\ref{the_system}) admits local solutions modulo each prime $p$ is still open.  Since we focus here on the global problem, we will assume that local solutions exist.

Let $S_0$ be the set of solutions of the system~(\ref{the_system}) and let
\[
	R(P) = \sum_{\substack{(p_1, \ldots, p_s) \in S_0 \\}} \prod_{i=1}^s \log p_i
\]
count them with logarithmic weighting. 

\begin{theorem}\label{main_theorem} If
\begin{itemize}

\item the system (\ref{the_system}) has a nontrivial real solution,

\item for every prime $p$, the system (\ref{the_system}) has a solution over the reduced residue classes modulo $p$, and

\item $s \geq 2s_\varepsilon(\mathbf{k})+1,$

then $R(P) \sim CP^{s-K}$ for some positive constant $C = C(\mathbf{u}_{ij}, \mathbf{k})$.
\end{itemize}
\end{theorem}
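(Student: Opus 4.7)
The plan is to run the Hardy--Littlewood circle method in $t$ variables on the generating sum
\[
f_i(\boldsymbol{\alpha}) = \sum_{p \leq P} (\log p)\, e\Bigl(\sum_{j=1}^t \alpha_j u_{ij} p^{k_j}\Bigr),
\]
so that orthogonality yields
\[
R(P) = \int_{[0,1)^t} \prod_{i=1}^s f_i(\boldsymbol{\alpha})\, d\boldsymbol{\alpha}.
\]
Fix $Q = P^\delta$ for a small $\delta > 0$ and dissect the torus into major arcs $\mathfrak{M}$, where each $\alpha_j$ admits a rational approximation $|\alpha_j - a_j/q| \leq Q P^{-k_j}$ with common denominator $q \leq Q$, and complementary minor arcs $\mathfrak{m}$.

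On $\mathfrak{M}$ I would apply the Siegel--Walfisz theorem to replace each $f_i(\boldsymbol{\alpha})$ by its predicted approximation -- a product of a normalized Gauss-type sum over $\mathbb{Z}/q\mathbb{Z}$ with a smooth archimedean integral -- incurring only an admissible error. Expanding the product and integrating over $\mathfrak{M}$, then extending $Q \to \infty$, produces an asymptotic of the form $\mathfrak{S} \mathfrak{J} P^{s - K}$. The singular integral $\mathfrak{J}$ is positive by the standard Fourier-smoothing argument applied to the nontrivial real solution, and the singular series $\mathfrak{S}$ factors as an Euler product whose local factors are bounded below by Hensel lifting from the reduced-residue solvability hypothesis; the Euler product converges absolutely under $s \geq 2 s_\varepsilon(\mathbf{k}) + 1$.

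The main obstacle is the minor arc bound. I would extract $\sup_{\boldsymbol{\alpha} \in \mathfrak{m}} |f_{i_0}(\boldsymbol{\alpha})|$ for one index $i_0$ and apply H\"older's inequality to the remaining $s - 1 \geq 2 s_\varepsilon(\mathbf{k})$ factors, reducing the task to controlling a single $L^{2 s_\varepsilon}$ mean of $f_i$ together with a pointwise minor arc bound for $f_{i_0}$. The mean value follows from the hypothesis: after a linear change of Fourier variables absorbing the coefficients $u_{ij}$, the weighted prime integral is majorized by $(\log P)^{2 s_\varepsilon}\, J_{s_\varepsilon, \mathbf{k}}(P) \ll P^{2 s_\varepsilon - K + \varepsilon}$. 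The pointwise bound is the delicate step and requires a Weyl--Vinogradov estimate for the prime exponential sum with polynomial argument $\sum_j \alpha_j u_{i_0 j} x^{k_j}$. One must show that the $t$-variable minor arc condition forces the leading coefficient of this polynomial to fail a one-dimensional Diophantine approximation criterion -- this transference is precisely where distinctness of the $k_j$ and non-vanishing of the $u_{ij}$ enter -- and then invoke Vaughan's identity to obtain $\sup_\mathfrak{m} |f_{i_0}| \ll P^{1 - \sigma + \varepsilon}$ for some $\sigma > 0$ depending on $k$. The minor arcs then contribute $\ll P^{s - K - \sigma + \varepsilon} = o(P^{s-K})$, and the asymptotic follows with $C = \mathfrak{S} \mathfrak{J} > 0$.
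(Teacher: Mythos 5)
Your minor-arc strategy coincides with the paper's: peel off one factor with a supremum, apply H\"older to the remaining $s-1 \geq 2s_\varepsilon(\mathbf{k})$ factors so that the $L^{2s_\varepsilon(\mathbf{k})}$ mean of the prime exponential sum is dominated by $J_{s_\varepsilon(\mathbf{k}),\mathbf{k}}(P)$ (the nonzero coefficients $u_{ij}$ wash out of the underlying counting problem), and obtain a pointwise power saving on $\mathfrak{m}(Q)$ via Vaughan's identity after showing that a minor $\bm\alpha$ forces some $\alpha_{j_0}$ to fail a one-variable rational approximation. This is exactly Lemma~\ref{prime_mvt}, Theorem~\ref{F_bound}/Corollary~\ref{weyl_bound}, and Theorem~\ref{minor_arc_bound}.

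The gap is in the major arcs. You fix $Q = P^\delta$ and then propose to replace each $f_i$ by the local-density product across all of $\mathfrak{M}(Q)$ ``by Siegel--Walfisz \ldots\ incurring only an admissible error.'' But Siegel--Walfisz controls $\psi(x;q,a)-x/\phi(q)$ only for $q \ll (\log x)^A$, and the error it supplies, of size $P\exp(-c\sqrt{\log P})$, exceeds every fixed power $P^{-\eta}$. When $q$ is as large as $P^\delta$ (or when $|\bm\theta|P^{\mathbf k}$ is as large as $Q/q$), the expected main term $\phi(q)^{-1}W_i(q,\mathbf{a})\int_0^P e(\bm\theta \mathbf{u}_i x^{\mathbf k})\,dx$ is of size roughly $q^{-1/(k+1)+\varepsilon}P(1+|\bm\theta|P^{\mathbf k})^{-1/(k+1)}$, a genuine power of $P$ below $P$, so a Siegel--Walfisz-sized error swamps it and the approximation carries no information. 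You must either shrink the major arcs to $q \leq (\log P)^A$ (but then your minor arcs no longer admit the power-saving pointwise bound you are relying on, and the whole balance has to be redone with only logarithmic savings), or do what the paper does: split $\mathfrak{M}(Q)$ into an inner region $\mathfrak{B}(Q)$, where $q(1+|\bm\theta|P^{\mathbf k}) \leq (\log P)^A$ and Siegel--Walfisz suffices (Lemma~\ref{small_major_arc_approximation}), and an outer region $\mathfrak{N}(Q)$, where one expands $f_i$ in Dirichlet characters, inserts the explicit formula for $\psi(x,\chi)$, and controls the resulting sum over zeros by the zero-density estimates of Lemma~\ref{zero_density_estimates_lemma}, the exponential-integral bound of Theorem~\ref{I_bound}, and the mean value for $W(q;\mathbf a,\chi)$ from Lemma~\ref{W_mvt}. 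That treatment of $\mathfrak{N}(Q)$ in Lemma~\ref{outer_major_arc_approx} is the technical heart of the argument and is absent from your proposal. The remaining pieces you describe (multiplicativity of the singular series, positivity via Hensel lifting from the local solvability hypothesis, and positivity of the singular integral from the real solution as in Brandes--Parsell) do match the paper.
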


This theorem is stated conditionally on the best known value of $s_\varepsilon(\mathbf{k})$ for maximal generality.  In cases where such mean value bounds are known, this immediately provides an explicit theorem.  Notably, the proof of the Vinogradov mean value theorem by Bourgain, Demeter, and Guth~\cite{bdg} provides mean values for Vinogradov systems, which, in combination with Theorem~\ref{main_theorem}, gives the following result.

\begin{corollary}\label{vinogradov_systems_theorem} If (\ref{the_system}) is a Vinogradov system, i.e., $\mathbf{k} = (1, 2, 3, \ldots, k)$, then if
\begin{itemize}

\item the system (\ref{the_system}) has a nontrivial real solution,

\item for every prime $p$, the system (\ref{the_system}) has a solution over the reduced residue classes modulo $p$, and

\item $s \geq k^2+k+1$,

then $R(P) \sim CP^{s-k(k+1)/2}$ for some positive constant $C = C(\mathbf{u}_{ij},\mathbf{k})$.
\end{itemize}
\end{corollary}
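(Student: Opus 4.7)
The plan is to deduce the corollary directly from Theorem~\ref{main_theorem} by substituting in the best known mean value bound for Vinogradov systems. First I would compute the total degree of a Vinogradov system, which is
\[
    K = \sum_{j=1}^k j = \frac{k(k+1)}{2},
\]
so that the target exponent $P^{s-K}$ in Theorem~\ref{main_theorem} matches $P^{s - k(k+1)/2}$ in the statement of Corollary~\ref{vinogradov_systems_theorem}.

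Next I would invoke the Bourgain--Demeter--Guth resolution of the main conjecture in Vinogradov's mean value theorem~\cite{bdg}: for $\mathbf{k} = (1,2,\ldots,k)$ and every $\ell \geq k(k+1)/2$, one has
\[
    J_{\ell,\mathbf{k}}(P) \ll P^{2\ell - k(k+1)/2 + \varepsilon}.
\]
By definition of $s_\varepsilon(\mathbf{k})$ as the least such $\ell$, this gives $s_\varepsilon(\mathbf{k}) \leq k(k+1)/2$. Consequently, the hypothesis $s \geq k^2 + k + 1$ in the corollary implies
\[
    s \geq k^2 + k + 1 \geq 2\cdot\frac{k(k+1)}{2} + 1 \geq 2s_\varepsilon(\mathbf{k}) + 1,
\]
so the third hypothesis of Theorem~\ref{main_theorem} holds. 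The remaining two hypotheses (existence of a nontrivial real solution and of solutions in the reduced residue classes modulo every prime) are assumed in the corollary verbatim.

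Applying Theorem~\ref{main_theorem} then yields $R(P) \sim CP^{s-K} = CP^{s - k(k+1)/2}$ for a positive constant $C$ depending on the coefficients $u_{ij}$, as claimed. There is no real obstacle here: the content of the corollary lies entirely in Theorem~\ref{main_theorem} together with the known value of $s_\varepsilon(\mathbf{k})$ for Vinogradov systems, and the proof is simply a matter of checking that the BDG bound meets the quantitative threshold required by the main theorem.
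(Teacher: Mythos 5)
Your proposal is correct and follows essentially the same route as the paper: compute $K = k(k+1)/2$, use the Bourgain--Demeter--Guth bound to conclude $s_\varepsilon(\mathbf{k}) \leq k(k+1)/2$ for Vinogradov systems, and verify that $s \geq k^2+k+1$ meets the threshold $s \geq 2s_\varepsilon(\mathbf{k})+1$ of Theorem~\ref{main_theorem}. This is precisely how the paper deduces the corollary, so there is nothing to add.
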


This result can also be applied to the single equation of the standard Waring-Goldbach problem, given the existence of sufficiently good mean value bounds.  Let $H(3)$ be the smallest $s$ such that all sufficiently large odd numbers can be expressed as the sum of $s$ cubes of primes, in keeping with standard notation for the Waring-Goldbach problem.  Mean value bounds of sufficient strength to apply our theorem are not known unconditionally.  Conditional on a specific Riemann-type hypothesis, which we shall denote HW, Hooley~\cite{hooley} has a mean value bound of the form $s_\varepsilon(\{3\}) \le 3$.  (We make no attempt to elucidate the nature of this hypothesis here, since the key strength of Theorem~\ref{main_theorem} is that is allows us to treat the mean value bound as a black box.)  Combining this with Theorem~\ref{main_theorem} (and the observation that the definition of $H(3)$ automatically takes care of the local conditions) yields the following:

\begin{corollary}\label{seven_cubes} Assuming Hooley's hypothesis HW, $H(3) \le 7$.
\end{corollary}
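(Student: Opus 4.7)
The plan is to deduce Corollary~\ref{seven_cubes} as a specialization of the method behind Theorem~\ref{main_theorem} to the Waring-Goldbach problem for seven cubes. Take $t = 1$, $k_1 = 3$, and $s = 7$, so that $K = 3$. Hooley's hypothesis HW supplies $s_\varepsilon(\{3\}) \le 3$, so the key inequality $s \geq 2s_\varepsilon(\mathbf{k}) + 1$ holds as $7 \geq 7$. As emphasized in the excerpt, the definition of $H(3)$ already restricts $N$ to residue classes admitting local solvability at every prime, and the real solution condition is automatic for any sufficiently large positive $N$. Hence all hypotheses of Theorem~\ref{main_theorem} are in force.

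The one subtlety is that Theorem~\ref{main_theorem} is stated for the homogeneous system $\sum u_{ij}p_i^{k_j} = 0$, whereas $H(3) \le 7$ concerns the inhomogeneous equation $p_1^3 + \cdots + p_7^3 = N$. I would rerun the circle-method proof of Theorem~\ref{main_theorem} with the twisted integral
\[
r(N) = \int_0^1 f(\alpha)^7 \, e(-N\alpha)\, d\alpha, \qquad f(\alpha) = \sum_{p \le P}(\log p)\,e(\alpha p^3), \quad P \asymp N^{1/3}.
\]
The added factor $e(-N\alpha)$ has no bearing on the minor-arc estimate
\[
\Big| \int_{\mathfrak{m}} f(\alpha)^7 \, e(-N\alpha)\, d\alpha \Big| \le \sup_{\alpha \in \mathfrak{m}} |f(\alpha)| \cdot \int_0^1 |f(\alpha)|^6\, d\alpha,
\]
in which the sixth moment is controlled by HW (after the standard Siegel-Walfisz-type transference from integer to prime variables), while the pointwise supremum on the minor arcs is bounded via a Weyl-type estimate for cubic exponential sums over primes. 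The extra factor of $f$ pulled out here is precisely the role of the $+1$ in $2s_\varepsilon(\mathbf{k}) + 1$. The major-arc analysis produces a main term of order $\mathfrak{S}(N)\mathfrak{J}(N)\,N^{4/3}$ with both $\mathfrak{S}(N)$ and $\mathfrak{J}(N)$ positive by the local and real solvability hypotheses, so $r(N) > 0$ for every sufficiently large admissible $N$.

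The principal obstacle is just checking that the circle-method proof underlying Theorem~\ref{main_theorem} is robust to the inhomogeneous twist and that HW's integral mean value bound transfers cleanly to a bound for the prime exponential sum $f(\alpha)$. Both are routine in the Waring-Goldbach literature. With these in hand, HW shoulders the real burden of trimming the variable count to seven, and Corollary~\ref{seven_cubes} follows.
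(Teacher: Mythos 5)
Your proposal matches the paper's own derivation: the paper obtains the corollary simply by feeding Hooley's bound $s_\varepsilon(\{3\})\le 3$ into Theorem~\ref{main_theorem} with $t=1$, $k_1=3$, $s=7$ (so $2s_\varepsilon(\mathbf{k})+1=7$), remarking that the definition of $H(3)$ disposes of the local conditions. Your additional discussion of the inhomogeneous equation $p_1^3+\cdots+p_7^3=N$ and the twisted integral with $e(-N\alpha)$ addresses a point the paper silently elides (Theorem~\ref{main_theorem} is stated for homogeneous systems), and the standard adaptation you sketch is the right way to fill that in.
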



We prove the main theorem by a circle method argument.  Section~\ref{sec:prelims} sets up the circle method and collects several needed lemmas from the literature.  Section~\ref{sec:exp_int} is devoted to the proof of an exponential integral bound of sufficient precision to treat the minor arcs.  Section~\ref{sec:minor_arcs} uses the assumed mean value bounds and Vaughan's identity to treat the minor arcs.  Section~\ref{sec:major_arcs} performs the major arc breakdown into a singular series and a singular integral.  Section~\ref{sec:asymp_formula} treats the singular series and integral to obtain an asymptotic formula.  Section~\ref{sec:main_theorem} concludes the argument and the proof of the main theorem.

\section{Preliminaries}\label{sec:prelims}

The letter $p$ will always be assumed to refer to a prime.  The exponential function $e(\alpha)$ is taken to mean $e^{2\pi i \alpha}$.  The letter $\varepsilon$ refers to a sufficiently small positive constant and the letter $C$ to a positive constant: these will not refer to consistent values from one place to another.  The functions $\Lambda$ and $\mu$ are those of von Mangoldt and M\"obius respectively.  Symbols in bold are tuples, with the corresponding symbol with a subscript denoting a component.  Operations performed on tuples are to be taken componentwise and multiplication is taken to mean the standard inner product.  Thus, for example,
\[
	e(\bm\alpha \mathbf{u}_i p^\mathbf{k}) = e\left(\sum_{j=1}^t \alpha_j u_{ij} p^{k_j}\right).
\]
We write $f(x) \ll g(x)$ for $f(x) = O(g(x))$, $f(x) \asymp g(x)$ if both $f(x) \ll g(x)$ and $g(x) \ll f(x)$ hold, and $f(x) \sim g(x)$ if $f(x)/g(x) \rightarrow 1$ as $x \rightarrow \infty$.

We assume without loss of generality that for each $j$, $\gcd(u_{1j}, \ldots, u_{sj})$ $= 1$.

Define the generating function
\[
	f_i(\bm\alpha; P) = f_i(\bm\alpha) = \sum_{p \leq P} (\log p)e(\bm\alpha \mathbf{u}_i p^\mathbf{k}).
\]
We will typically suppress the argument $P$ when there is no risk of ambiguity.  Much of our analysis will be blind to the coefficients $u_{ij}$, so we will also work with the coefficient-independent generating function
\[
	f(\bm\alpha) = \sum_{p \leq P} (\log p)e(\bm\alpha p^\mathbf{k}).
\]

Let $\mathcal{A} = (\mathbb{R}/\mathbb{Z})^t$.  Now
\begin{equation}\label{R_breakdown}
	\int_\mathcal{A} \prod_{i=1}^s f_i(\bm\alpha) d\bm\alpha
\end{equation}
\[
	= \int_\mathcal{A} \sum_{p_1, \ldots, p_s \leq P} \prod_{i=1}^s (\log p_i)e(\bm\alpha \mathbf{u}_i p^\mathbf{k}) d\bm\alpha
\]
\[
	= \sum_{\substack{(p_1, \ldots, p_s) \in S_0 \\}} \prod_{i=1}^s \log p_i = R(P)
\]
by orthogonality.  We divide $\mathcal{A}$ into major and minor arcs.  Let $Q = P^\delta$ for some sufficiently small $\delta>0$ to be fixed later.
 and for all $q < Q$, $1 \leq a_j \leq q$ for $1 \leq j \leq t$, $\gcd(a_1, \ldots, a_t, q) = 1$, let a typical major arc $\mathfrak{M}(\mathbf{a},q;Q)$ consist of all $\bm\alpha$ such that for each $j$ with $1 \leq j \leq t$,

\[
	\left|\alpha_j-\frac{a_j}{q}\right| \leq \frac{Q}{qP^{k_j}}.
\]
Let the major arcs $\mathfrak{M}(Q)$ be the union of all such $\mathfrak{M}(\bm\alpha,q;Q)$, and let the minor arcs $\mathfrak{m}(Q)$ be the remainder of $\mathcal{A}$.

We now collect several results from the literature and preliminary lemmas that follow directly from them.

\begin{lemma}\label{zero_density_estimates_lemma} Let $N(\alpha, T;\chi)$ denote the zeros $\rho=\beta+i\gamma$ of $L(s;\chi)$ with $\beta\ge\alpha$, $|\gamma|\le T$ .  Then there is a positive constant $c$ such that, uniformly for $\frac12\le \alpha\le 1$,
\[
\sum_{\chi\pmod* q} N(\alpha,T;\chi)\ll (qT)^{c(1-\alpha)}
\]
and
\[
\sum_{q\le Q}\,\,\sideset{}{^*}\sum_{\chi^*\,\mathrm{mod}\, q} N(\alpha,T;\chi^*)\ll (QT)^{c(1-\alpha)}
\]
where the implicit constant is absolute and $\sideset{}{^*}\sum$ indicates that the sum is restricted to primitive characters modulo $q$.
\end{lemma}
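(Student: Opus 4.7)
The plan is to recognize this lemma as the classical hybrid zero-density theorem for Dirichlet $L$-functions, originally due to Montgomery, with improvements by Huxley, Jutila, and others. The first bound is the density theorem for a single modulus $q$ summed over all characters modulo $q$, and the second is its averaged form over primitive characters of conductor at most $Q$. Both appear in standard references on analytic number theory, and for the applications of this paper we need only some absolute constant $c$, not a sharp exponent.

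The standard proof proceeds by a zero-detection argument. First, I would mollify $L(s,\chi)$ by a short Dirichlet polynomial
\[
M_Y(s,\chi)=\sum_{n\le Y}\mu(n)\chi(n)n^{-s},
\]
so that the product $L(s,\chi)M_Y(s,\chi)$ has Dirichlet expansion with constant term $1$. For any nontrivial zero $\rho=\beta+i\gamma$ of $L(s,\chi)$, a smoothed Perron formula or contour shift then forces an inequality of the shape
\[
1\ll \Bigl|\sum_{N<n\le 2N}a_n\chi(n)n^{-\rho}\Bigr|
\]
for some dyadic parameter $N$ determined by $Y$, $T$, and the accuracy target, where the coefficients $a_n$ arise from the mollification.

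Second, I would sum this lower bound over zeros and over characters. A standard well-spacedness lemma confines zeros to sets of size $O(\log(qT))$ in any fixed strip, which converts the zero sum into an $L^{2}$ estimate on the detecting polynomial. Applying the hybrid large sieve
\[
\sum_{q\le Q}\frac{q}{\varphi(q)}\sideset{}{^*}\sum_{\chi\pmod*{q}}\int_{-T}^{T}\Bigl|\sum_{n\le N}b_n\chi(n)n^{-it}\Bigr|^{2}dt\ll (N+Q^{2}T)\sum_{n\le N}|b_n|^{2}
\]
and then balancing the mollifier length $Y$ against $\alpha$ yields the exponential dependence $(QT)^{c(1-\alpha)}$. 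The first inequality in the lemma is essentially the specialisation $Q=q$, where only the single-modulus large sieve is needed.

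The main technical obstacle is the optimisation step: achieving a genuinely uniform exponent $c(1-\alpha)$ across the whole range $\alpha\in[\tfrac12,1]$ requires splitting the detecting polynomial into long and short ranges and treating the short range via the Hal\'asz--Montgomery method, which is what prevents a completely elementary derivation. Since we only need the existence of some absolute $c$ and both bounds appear essentially verbatim as Theorems~12.1 and~12.2 of Montgomery's \emph{Topics in Multiplicative Number Theory} (with the averaged form also given in Chapter~10 of Iwaniec--Kowalski), the cleanest presentation here is a direct citation to those sources; no refinement beyond the classical statements is required for the later sections.
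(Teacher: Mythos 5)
Your identification of the statement as a hybrid zero-density estimate is right in spirit, but there is a genuine gap: the lemma as stated is a \emph{log-free} density estimate. The bound $\ll (qT)^{c(1-\alpha)}$ with an absolute implied constant, uniformly for $\tfrac12\le\alpha\le 1$, forces the right-hand side to be $O(1)$ when $1-\alpha\ll 1/\log(qT)$, so no factor $(\log qT)^{B}$ can be tolerated or absorbed by enlarging $c$ in that range. The results you propose to cite (Theorems 12.1 and 12.2 of Montgomery's \emph{Topics}, and the density theorems in Iwaniec--Kowalski) all carry such log powers, so they do not give the stated lemma; likewise the zero-detection argument you sketch (mollifier, well-spacing, hybrid large sieve, Hal\'asz--Montgomery) is exactly the machinery that produces those log-weighted bounds. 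Removing the logarithms near $\alpha=1$ is precisely the extra content of the Linnik--Tur\'an power-sum method as implemented by Fogels and by Gallagher, and indeed the paper's proof is simply a citation to Fogels (Theorem 1.1) and Gallagher's ``large sieve density estimate near $\sigma=1$'' (Theorem 6), which are log-free. (The fact that Gallagher's hybrid bound has $(Q^{2}T)^{c(1-\alpha)}$ rather than $(QT)^{c(1-\alpha)}$ is harmless, since $c$ is an unspecified absolute constant.)

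So your proposal would prove a weaker statement than the one claimed. Whether the later application in the paper could survive a few logarithms (there is a power saving in $Q$ at the end of the outer major-arc estimate, and a factor $R^{-\delta}$ with $R=(\log P)^{A}$) is a separate question; as a proof of the lemma as stated, citing Montgomery-type theorems or running the standard detection argument is not sufficient, and you should instead invoke the log-free estimates of Fogels or Gallagher (or reproduce a power-sum argument).
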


This follows from Theorem 1.1 of~\cite{fogels}.  (See also Theorem 6 of~\cite{gallegher}.)

\begin{lemma}\label{vmvt} (Vinogradov Mean Value Theorem) Let $F(\bm\alpha) = \displaystyle\sum_{n \leq N} e(\alpha_1 n + \alpha_2 n^2 + \ldots + \alpha_k n^k)$.  Then
\[
	\int_{[0,1]^k} |F(\bm\alpha)|^{2m} d\bm\alpha \ll N^{m+\varepsilon} + N^{2m-\frac{k(k+1)}{2} + \varepsilon}.
\]
Moreover, if $s > k(k+1)$, 
\[
	\int_{[0,1]^k} |F(\bm\alpha)|^s d\bm\alpha \ll N^{s-\frac{k(k+1)}{2}}.
\]
\end{lemma}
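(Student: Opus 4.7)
The plan begins with orthogonality: expanding $|F|^{2m}$ and integrating over $[0,1]^k$ shows that $\int_{[0,1]^k}|F(\bm\alpha)|^{2m}\,d\bm\alpha$ equals the number $J_{m,(1,2,\ldots,k)}(N)$ of integer solutions of the Vinogradov system $\sum_i x_i^j = \sum_i y_i^j$ $(1 \le j \le k)$ with variables in $[1,N]$. The first inequality is therefore the Main Conjecture for Vinogradov's mean value theorem, and the task reduces to establishing this.

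To prove the Main Conjecture I would follow one of the two now-standard routes. The decoupling approach of Bourgain, Demeter, and Guth establishes an $\ell^2(L^p)$ decoupling inequality for the moment curve $(t,t^2,\ldots,t^k)\subset\mathbb{R}^k$ at the critical exponent $p=k(k+1)$, via a multiscale induction that feeds lower-dimensional decoupling into Brascamp-Lieb-type transversality estimates; the decoupling inequality, applied to $F$, translates via Plancherel and a dyadic pigeonholing into the desired integer-solution bound. Alternatively, Wooley's efficient-congruencing method sets up a nested family of auxiliary mean values that track solutions with prescribed congruence conditions modulo prime powers $p^j$, and iterates a key inequality relating mean values at successive congruence levels to drive the exponent down to its conjectural value. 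Either way, this is the principal obstacle: both proofs are substantial, multi-stage arguments.

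For the ``moreover'' assertion with $s > k(k+1)$, choose an even integer $2m$ satisfying $k(k+1) < 2m \le s$. The first inequality then gives $\int|F|^{2m}\ll N^{2m-k(k+1)/2+\varepsilon}$, since the second exponent dominates the first in this regime. Combining with the trivial pointwise bound $|F|^{s-2m}\le N^{s-2m}$ yields $\int|F|^s\ll N^{s-k(k+1)/2+\varepsilon}$. To remove the $\varepsilon$, I would split $[0,1]^k$ into Hardy-Littlewood major and minor arcs: the major arcs contribute an exact main term of order $N^{s-k(k+1)/2}$ via standard singular series/integral analysis, while on the minor arcs a pointwise Weyl-type estimate of the form $|F(\bm\alpha)|\ll N^{1-\sigma}$ combined with the $2m$-th moment bound gives a contribution of order $N^{s-k(k+1)/2 - \sigma(s-k(k+1))+\varepsilon}$, which is strictly smaller than $N^{s-k(k+1)/2}$ once $\varepsilon$ is chosen small relative to $\sigma(s-k(k+1))$. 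This yields the sharp bound without the $\varepsilon$.
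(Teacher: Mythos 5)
Your proposal is, at its core, the same as the paper's treatment: the paper gives no proof of this lemma, simply citing Theorem 1.1 of Bourgain--Demeter--Guth for the first bound and formula (7) of the same paper for the second, and your first two paragraphs amount to the same thing --- reducing the first bound to the Main Conjecture via orthogonality and deferring its proof to decoupling or efficient congruencing, which is the only realistic option for a result of this depth. For the supercritical statement you sketch the standard $\varepsilon$-removal (H\"older against an even moment, a Weyl-type pointwise bound on minor arcs, and a major-arc estimate), which is precisely the argument encapsulated in BDG's formula (7), so the outline is sound; the one slip is your choice of $2m$: when $k(k+1) < s < k(k+1)+2$ there is no even integer $2m$ with $k(k+1) < 2m \le s$. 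The fix is to take $2m = k(k+1)$, the critical exponent, where the two terms $N^{m+\varepsilon}$ and $N^{2m-k(k+1)/2+\varepsilon}$ coincide; then H\"older gives $\int |F|^s \le (\sup |F|)^{s-2m}\int |F|^{2m}$, the minor-arc Weyl saving is $\sigma(s-k(k+1))>0$, and the major arcs contribute $\ll N^{s-k(k+1)/2}$ for $s>k(k+1)$, so the $\varepsilon$-free bound follows exactly as you describe.
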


The first statement is Theorem 1.1 of~\cite{bdg}.  The second is formula (7) of the same paper.

Let
\[
	W(q, \mathbf{a}, \chi) = \sum_{r=1}^q e\left(\frac{\mathbf{a} r^\mathbf{k}}{q}\right)\chi(q)
\]
and
\[
	W_i(q, \mathbf{a}, \chi) = \sum_{r=1}^q e\left(\frac{\mathbf{a} \mathbf{u}_i r^\mathbf{k}}{q}\right)\chi(q)
\]

\begin{lemma}\label{cochrane_zheng_W_bound}
\[
	W_i(q, \mathbf a, \chi) \ll q^{1-\frac{1}{k+1} + \varepsilon}.
\]
\end{lemma}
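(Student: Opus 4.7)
The plan is to recognize $W_i(q,\mathbf a,\chi)$ as a hybrid exponential–character sum of the form treated by Cochrane and Zheng, and to invoke their uniform bound. Setting $f(r) = \mathbf a \mathbf u_i r^{\mathbf k} = \sum_{j=1}^t a_j u_{ij} r^{k_j} \in \mathbb Z[r]$, this is a polynomial in $r$ of degree at most $k=\max_j k_j$, so (correcting the apparent typo $\chi(q)$ to $\chi(r)$) we may write $W_i(q,\mathbf a,\chi) = \sum_{r=1}^q \chi(r)\, e_q(f(r))$, which is precisely the kind of sum to which the Cochrane–Zheng estimate applies.

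The first step is to reduce to prime-power moduli. Writing $q = \prod_{p} p^{m_p}$, the Chinese Remainder Theorem factors both the additive character $e_q(\cdot)$ (via Bezout coefficients) and the multiplicative character $\chi$ across prime-power components, so $W_i(q,\mathbf a,\chi)$ decomposes multiplicatively as a product of analogous sums $W_i(p^{m_p},\mathbf a',\chi')$. It therefore suffices to prove the bound $|W_i(p^m,\mathbf a,\chi)| \ll (p^m)^{1-1/(k+1)+\varepsilon}$ uniformly in $p$ and $m$, with implicit constant depending only on $k$.

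The second step splits into cases on the prime power. For $m=1$ with $p$ not dividing the content of the non-constant part of $f \pmod p$, Weil's bound for hybrid sums gives $|W_i(p,\mathbf a,\chi)| \ll k\sqrt p$, which is far stronger than required. For prime powers $p^m$ with $m \geq 2$, and for primes dividing the content, the argument requires the $p$-adic change-of-variables and Hensel-lifting technique of Cochrane and Zheng; the normalization $\gcd(u_{1j},\ldots,u_{sj})=1$ prevents the content from being too large globally, so the resulting loss over ``bad'' primes contributes at worst a $q^\varepsilon$ factor.

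The main technical obstacle is concentrated in the small-prime, high-power regime ($p \leq k$, $m$ large), where Weil's theorem degenerates and one genuinely needs Cochrane–Zheng's delicate stratification of the critical point set of $f \pmod{p^m}$ to obtain the exponent $1-1/(k+1)$. Since their analysis is carried out in full in the literature, the proof of the lemma reduces to invoking their theorem and verifying that the hypotheses on $f$---namely that the non-constant coefficients are not uniformly divisible by a large power of any prime dividing $q$---are satisfied given our coefficient normalization.
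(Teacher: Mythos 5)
Your proposal is correct and essentially the same as the paper's treatment: the lemma is proved there by a one-line appeal to Corollary 1.1 of Cochrane--Zheng, and your CRT reduction to prime powers, Weil's bound for $m=1$, and their stratification for higher prime powers is exactly the standard unpacking of that citation (including the correction of $\chi(q)$ to $\chi(r)$). One small fix: the content of $f(r)=\sum_j a_j u_{ij} r^{k_j}$ at a prime $p\mid q$ is controlled by the major-arc condition $\gcd(a_1,\dots,a_t,q)=1$ together with the fact that the $u_{ij}$ are fixed nonzero integers (so the implied constant may depend on $\mathbf u$), not by the normalization $\gcd(u_{1j},\dots,u_{sj})=1$, which runs over the variable index $i$ for fixed $j$.
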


This follows from Corollary 1.1 of~\cite{cochrane_zheng}.

\begin{lemma}\label{W_mvt} 

Suppose that $\chi$ is a Dirichlet character modulo $q$ and that $s \geq 2s_\varepsilon(\mathbf k)+1$.  Then
\[
	\sum_{a_{k_1}=1}^q\cdots\sum_{a_{k_t}=1}^q |W(q;\mathbf a,\chi)|^{s-1} \ll q^{s-1+\varepsilon}.
\]
\end{lemma}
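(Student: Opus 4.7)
My plan is to reduce the $(s-1)$-th power moment to a $2m$-th power moment, where $m = s_\varepsilon(\mathbf k)$, via the trivial bound $|W(q,\mathbf a,\chi)| \leq q$, and then to evaluate this latter moment by orthogonality in $\mathbf a$ and the mean value hypothesis defining $s_\varepsilon(\mathbf k)$. Since $s - 1 \geq 2m$,
\[
\sum_{\mathbf a \pmod* q} |W(q,\mathbf a,\chi)|^{s-1} \leq q^{s-1-2m}\sum_{\mathbf a \pmod* q} |W(q,\mathbf a,\chi)|^{2m},
\]
so it suffices to prove $\sum_{\mathbf a \pmod* q}|W(q,\mathbf a,\chi)|^{2m} \ll q^{2m+\varepsilon}$.

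To handle this, I would expand $|W|^{2m}=W^m\overline{W}^m$ as a sum over $(r_1,\dots,r_{2m})\in[1,q]^{2m}$, swap the order of summation, and apply orthogonality in each coordinate of $\mathbf a$. This contributes a factor $q^t$ and restricts to tuples with $\sum_{i=1}^m r_i^{k_j} \equiv \sum_{i=m+1}^{2m} r_i^{k_j} \pmod q$ for every $j$. Bounding the character factors trivially by $1$ yields $\sum_{\mathbf a\pmod* q}|W|^{2m} \leq q^t I_m(q)$, where $I_m(q)$ is the number of tuples in $[1,q]^{2m}$ satisfying these $t$ congruences.

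The crucial step is then to show $I_m(q) \ll q^{2m-t+\varepsilon}$. Since $r_i^{k_j}\in[1,q^{k_j}]$, each congruence lifts to an honest equation $\sum_{i=1}^m r_i^{k_j} - \sum_{i=m+1}^{2m} r_i^{k_j} = q h_j$ for some integer $h_j$ with $|h_j| \leq m q^{k_j-1}$, giving $\ll \prod_j q^{k_j-1} = q^{K-t}$ choices of $\mathbf h$. For fixed $\mathbf h$, writing $A(\mathbf c)=\#\{\mathbf r\in[1,q]^m:\sum_i r_i^{k_j}=c_j\ \forall j\}$, the number of tuples is $\sum_{\mathbf c} A(\mathbf c)A(\mathbf c - q\mathbf h)$, and Cauchy–Schwarz together with translation invariance of $\sum_{\mathbf c}A(\mathbf c)^2$ bounds this by $\sum_{\mathbf c}A(\mathbf c)^2 = J_{m,\mathbf k}(q) \ll q^{2m-K+\varepsilon}$, by the definition of $s_\varepsilon(\mathbf k)$. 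Summing over $\mathbf h$ gives $I_m(q) \ll q^{2m-t+\varepsilon}$, and assembling yields $\sum_{\mathbf a\pmod* q}|W|^{s-1} \ll q^{s-1-2m}\cdot q^t\cdot q^{2m-t+\varepsilon} = q^{s-1+\varepsilon}$, as claimed.

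The main obstacle is this final step: one must carefully account for the number of ``carries'' $h_j$ and verify that Cauchy–Schwarz cleanly reduces the shifted count to $J_{m,\mathbf k}(q)$. The argument is standard for Vinogradov-type moments, but it depends on having enough room in the range $[1,q]$ of each $r_i$ that the congruence-to-equation lift incurs no extra loss, and on the fact that $A(\mathbf c-q\mathbf h)$ vanishes outside the natural range $[m,mq^{k_j}]$ so that the translation step is legitimate.
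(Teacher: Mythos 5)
Your proposal is correct and follows essentially the same route as the paper: expand by orthogonality in $\mathbf a$ to pick up the factor $q^t$ and the $t$ congruences, lift these to equations with $\ll q^{K-t}$ choices of the shift vector $\mathbf h$, and bound the count for each fixed $\mathbf h$ by the mean value $J_{m,\mathbf k}(q)\ll q^{2m-K+\varepsilon}$. The only differences are that you make explicit two steps the paper leaves implicit — reducing the odd exponent $s-1$ to the even moment $2m$ via the trivial bound $|W|\le q$, and justifying the fixed-$\mathbf h$ (inhomogeneous) count by Cauchy--Schwarz and translation invariance — which is a welcome tightening rather than a different argument.
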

\begin{proof}
By the orthogonality of the additive characters our sum is
\[
q^t \sideset{}{^*}\sum_{x_1,\ldots,x_K,y_1,\ldots y_K}\chi(x_1,\ldots x_K)\overline\chi(y_1,\ldots,y_K)
\]
where $\sideset{}{^*}\sum$ indicates that the sum is over $\mathbf x$ and $\mathbf y$ satisfying
\begin{align*}
x_1^{k_1}+\cdots +x_s^{k_1}&\equiv y_1^{k_1}+\cdots +y_s^{k_1}\pmod{q},\\
&\vdots\\
x_1^{k_t}+\cdots +x_s^{k_t}&\equiv y_1^{k_t}+\cdots +y_s^{k_t}\pmod q
\end{align*}
and $1\le x_j,y_j\le q$ with $(x_jy_j,q)=1$.  This sum is bounded by the number of solutions of
\begin{align*}
x_1^{k_1}+\cdots +x_s^{k_1} - & y_1^{k_1}-\cdots -y_s^{k_1} = h_1q,\\
&\vdots\\
x_1^{k_t}+\cdots +x_s^{k_t} - & y_1^{k_t}-\cdots -y_s^{k_t} = h_tq,
\end{align*}
with $1\le x_i,y_i\le q$ and $|h_j|\le Kq^{j-1}$.
For any given $\mathbf h$, by the definition of $s_\varepsilon(\mathbf k)$, the  number of such solutions is
\[
\ll q^{s-1-K+\varepsilon}.
\]
The number of $h_j$ for which there is a solution is 
\[
\ll q^{K-t}.
\]
The lemma follows.
\end{proof}

\section{An Exponential Integral}\label{sec:exp_int}



We will need the following exponential integral bound for the analysis of the major arcs.

\begin{theorem}\label{I_bound} Let $k \geq 2$, $\theta_1, \ldots, \theta_k, \tau$, and $\beta$ be real numbers such that $(k+1)/(k+2) \leq \beta \leq 1$, $\tau \neq 0$, and $\theta_k \neq 0$.  Let $\rho = \beta + 2\pi i \tau$ and let $X \geq 1$ be a real number.  Then
\[
	I(X; \bm\theta, \rho) = \int_0^X e(\theta_1 x + \cdots + \theta_k x^k) x^{\rho-1} dx
\]
satisfies
\[
	I(X; \bm\theta, \rho) \ll \frac{X^{\beta}}{(1 + X|\theta_1| + \cdots + X^k|\theta_k| + |\tau|)^{1/(1+k)}}.
\]

\end{theorem}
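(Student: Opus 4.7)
My plan is to prove this exponential integral bound by van der Corput's derivative test, applied after a normalization to the unit interval.

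Substituting $x = Xy$ and using $y^{\rho-1} = y^{\beta-1} e(\tau \log y)$ transforms the integral into
$$I(X;\bm\theta,\rho) = X^\rho \int_0^1 e(\Psi(y))\,y^{\beta-1}\,dy, \qquad \Psi(y) = \sum_{j=1}^k a_j y^j + \tau\log y,$$
where $a_j = \theta_j X^j$. Writing $A = 1 + \sum_{j=1}^k |a_j| + |\tau|$, the claim reduces to showing that the reduced integral $J$ over $[0,1]$ satisfies $|J| \ll A^{-1/(k+1)}$. If $A = O(1)$, the bound follows from $|J| \le \int_0^1 y^{\beta-1}\,dy = 1/\beta$, which is controlled by $\beta \ge (k+1)/(k+2)$; so I assume $A$ is large.

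Next I dyadically decompose $(0,1] = \bigsqcup_{n\ge 0} I_n$ with $I_n = (2^{-n-1}, 2^{-n}]$, and write $J_n = \int_{I_n} e(\Psi(y)) y^{\beta-1}\,dy$. On each $I_n$ the amplitude has size and total variation $\asymp 2^{n(1-\beta)}$, giving the trivial bound $|J_n| \ll 2^{-n\beta}$; the geometric series $\sum_n 2^{-n\beta}$ converges uniformly owing to $\beta \ge (k+1)/(k+2) > 0$, so it suffices to prove $|J_n| \ll A^{-1/(k+1)}\,2^{-n\beta}$ for each $n$. The differentiation formula
$$\Psi^{(m)}(y) = \sum_{j\ge m} \binom{j}{m} m!\, a_j\, y^{j-m} + (-1)^{m-1}(m-1)!\,\tau/y^m, \qquad 1 \le m \le k+1,$$
exhibits one new term per coefficient $a_j$ or $\tau$. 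When $|\tau|$ is the dominant contributor to $A$, the choice $m=k+1$ annihilates the polynomial part and leaves $|\Psi^{(k+1)}(y)| = k!|\tau|/y^{k+1} \gg |\tau|\,2^{n(k+1)}$ with $\Psi^{(k+1)}$ monotone on $I_n$; van der Corput of order $k+1$ with amplitude then delivers $|J_n| \ll |\tau|^{-1/(k+1)}\,2^{-n\beta}$, as required.

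The remaining case, in which some polynomial coefficient $|a_{j_0}| \gg A$ dominates, is the main obstacle. Here I would further subdivide $I_n$ into finitely many pieces (the number depending only on $k$) on which the constant term $j_0!\,a_{j_0}$ in $\Psi^{(j_0)}$ dominates both the polynomial correction terms $\binom{\ell}{j_0} j_0!\, a_\ell\, y^{\ell-j_0}$, $\ell > j_0$ (small for $y$ small), and the logarithmic term $(j_0-1)!\,\tau/y^{j_0}$ (small for $y$ bounded away from $0$, compatible here with $|\tau| \ll |a_{j_0}|$). On each subpiece van der Corput of order $j_0$ yields $|J_n| \ll |a_{j_0}|^{-1/(k+1)}\,2^{-n\beta}$; the exponent $-1/(k+1)$, weaker than the $-1/j_0$ that van der Corput naturally supplies, is what allows these subdivision losses to be absorbed. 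The delicate part is arranging the sub-partition uniformly in $n$ so that interference between the polynomial and logarithmic terms never spoils the lower bound $|\Psi^{(j_0)}(y)| \gg |a_{j_0}|$, and the hypothesis $\beta \ge (k+1)/(k+2)$ is used precisely to keep the dyadic series $\sum_n 2^{-n\beta}$ uniformly bounded despite these subdivisions.
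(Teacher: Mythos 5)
Your high-level strategy (dyadic decomposition of $(0,1]$, then a derivative test on each piece) parallels the paper's reduction to $[1,2]$, and your treatment of the case $|\tau| \gg A$ is clean: taking $m=k+1$ annihilates the polynomial part, the $\tau/y^{k+1}$ term is one-signed and of size $\asymp |\tau|2^{n(k+1)}$ on $I_n$, and the amplitude-weighted van der Corput bound then delivers $|J_n|\ll|\tau|^{-1/(k+1)}2^{-n\beta}$, which sums correctly.

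The gap is in the other case, and it is not a detail: it is the crux of the theorem. Having chosen $j_0$ with $|a_{j_0}|\asymp A$, you want $|\Psi^{(j_0)}(y)|\gg |a_{j_0}|$ on most of $I_n$ after ``finitely many'' further cuts. But $A$ can be distributed over several coefficients simultaneously (e.g.\ $|a_{j_0}|\asymp|a_{j_0+1}|\asymp A$), in which case the constant term $j_0!\,a_{j_0}$ does not dominate the polynomial corrections $\binom{\ell}{j_0}j_0!\,a_\ell y^{\ell-j_0}$ on the dyadic blocks with $y\asymp 1$ — the binomial factors can even make the corrections larger. And on those blocks $\Psi^{(j_0)}$ can genuinely vanish in the interior, so no bounded, $n$-uniform sub-partition restores a pointwise lower bound $\gg A$. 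What is actually required is a sublevel-set estimate: a proof that $\operatorname{meas}\{y:|\Psi'(y)|<A^{1/(k+1)}\}\ll A^{-1/(k+1)}$, where $\Psi'$ is a polynomial of degree $k-1$ in $y$ \emph{plus} the term $\tau/y$. This is precisely what the paper's iteration with the divided differences $h_{j+1}(x)=(h_j(x)-h_j(x_j))/(x-x_j)$ and the sets $\mathcal C_j,\mathcal D_j$ (Lemmas \ref{hj_structure}--\ref{C1_bound}) establishes; the first derivative test (Titchmarsh's Lemma 4.3) then handles the complement. Your sketch identifies the obstruction as ``the delicate part'' but does not supply this estimate, and without it the proposal does not close. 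If you want to keep the van der Corput framing, you would still need a polynomial sublevel-set bound of Remez/Cartan type keyed to the \emph{largest} coefficient (not the leading one), extended to absorb the $\tau/y^{j_0}$ term — at which point you have reproduced the content of the paper's $\mathcal C_j$ iteration in different clothing.

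One smaller point: for $j_0=1$, van der Corput of order $1$ also needs $\Psi'$ monotone (or $\Psi''$ of one sign) on each subinterval; this requires an additional bounded subdivision that you should make explicit.
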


This theorem may be of independent interest, so it is stated in some generality.  The crux of the proof, however, is encapsulated in the following restricted version.

\begin{lemma}\label{restricted_I_bound} Let $k \geq 2$ and let $\theta_1, \ldots, \theta_k$, and $\tau$ be real numbers such that $\tau\theta_k \neq 0$.  Then
\[
	I( \bm\theta, \tau) = \int_1^2 e(\theta_1 x + \cdots + \theta_k x^k + \tau \log x) dx
\]
satisfies
\[
	I(\bm\theta, \tau) \ll (1 + |\theta_1| + \cdots + |\theta_k| + |\tau|)^{-1/(1+k)}.
\]

\end{lemma}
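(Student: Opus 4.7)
Set $\Theta = 1 + |\theta_1| + \cdots + |\theta_k| + |\tau|$ and let $\phi(x) = \theta_1 x + \cdots + \theta_k x^k + \tau \log x$, so that $I(\bm\theta, \tau) = \int_1^2 e(\phi(x))\,dx$. Since $|I(\bm\theta,\tau)| \le 1$, the target bound is trivial when $\Theta$ is bounded, so I focus on the regime where $\Theta$ is large. The plan is to apply the van der Corput $j$-th derivative test on a partition of $[1,2]$ for varying $j \in \{1, \ldots, k+1\}$; the key observation is that the presence of $\tau \log x$ provides a nonvanishing $(k+1)$-st derivative, rather than just derivatives of order up to $k$ as in the pure polynomial case, and this is exactly what yields the exponent $1/(k+1)$.

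For $1 \le j \le k+1$ and $x \in [1,2]$,
\[
\phi^{(j)}(x) = \sum_{i=j}^{k} \frac{i!}{(i-j)!}\,\theta_i\, x^{i-j} + (-1)^{j-1}(j-1)!\,\tau\, x^{-j}
\]
is a linear combination of the parameters $\theta_1, \ldots, \theta_k, \tau$. I express the map $(\theta_1, \ldots, \theta_k, \tau) \mapsto (\phi^{(1)}(x), \ldots, \phi^{(k+1)}(x))$ by a $(k+1) \times (k+1)$ matrix $M(x)$; it is block upper triangular, with a $k \times k$ upper-triangular polynomial block carrying diagonal entries $1!, 2!, \ldots, k!$ and a final diagonal entry $(-1)^k k!\, x^{-(k+1)}$ coming from the logarithmic contribution. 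Thus $|\det M(x)|$ is bounded away from zero on $[1,2]$, $M(x)^{-1}$ has uniformly bounded operator norm, and I may conclude that
\[
\max_{1 \le j \le k+1} |\phi^{(j)}(x)| \gg_k \Theta \qquad \text{for every } x \in [1,2].
\]

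I then set $E_j = \{x \in [1,2] : |\phi^{(j)}(x)| \ge c\Theta\}$ with $c = c(k) > 0$ chosen small enough that $[1,2] = E_1 \cup \cdots \cup E_{k+1}$. Since $x^j \phi^{(j)}(x)$ is a polynomial of degree at most $k$, each level set $\{\phi^{(j)}(x) = \pm c\Theta\}$ contains only $O_k(1)$ points, so $E_j$ is a finite union of $O_k(1)$ intervals. On any interval $I \subseteq E_j$ with $j \ge 2$, the van der Corput $j$-th derivative test yields $|\int_I e(\phi(x))\,dx| \ll_k \Theta^{-1/j}$. For $j=1$, I further split each interval of $E_1$ at the zeros of $\phi''$ (again $O_k(1)$ of them) so that $\phi'$ is monotone on each piece, and then integration by parts produces a contribution $\ll_k \Theta^{-1}$. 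Summing the $O_k(1)$ pieces and extracting the worst bound, which comes from $j = k+1$, delivers $|I(\bm\theta, \tau)| \ll_k \Theta^{-1/(k+1)}$. The main obstacle, modest but essential, is verifying the uniform invertibility of $M(x)$ on $[1,2]$ and handling the monotonicity hypothesis of the first-derivative test; both reduce to direct computation given the triangular structure of $M(x)$ and the bounded number of zeros of each $\phi^{(j)}$.
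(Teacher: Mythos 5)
Your plan is sound and, as far as I can check, each step works: the matrix $M(x)$ sending $(\theta_1,\ldots,\theta_k,\tau)$ to $(\phi'(x),\ldots,\phi^{(k+1)}(x))$ is indeed upper triangular with diagonal $1!,\,2!,\,\ldots,\,k!,\,(-1)^k k!\,x^{-(k+1)}$, so its inverse is uniformly bounded on $[1,2]$ and $\max_j |\phi^{(j)}(x)| \gg_k \Theta$ pointwise; each $E_j$ is a union of $O_k(1)$ intervals because $x^j\phi^{(j)}(x)$ is a polynomial of degree at most $k$; and the van der Corput $j$-th derivative test on each piece gives $\Theta^{-1/j}$, with the worst case $j=k+1$ matching the claimed exponent. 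This is, however, a genuinely different route from the paper's. The paper never invokes derivative tests of order higher than one: it works only with $h_1(x)=\theta_1+2\theta_2x+\cdots+k\theta_kx^{k-1}+\tau/x$ (essentially $\phi'$), applies the first-derivative test (Titchmarsh, Lemma 4.3) on the set where $|h_1|$ exceeds $Y_1=(|\theta_1|+\cdots+|\theta_k|+|\tau|)^{1/(k+1)}$, and then shows the complementary ``small-derivative'' set $\mathcal{C}_1$ has measure $\ll Y_1^{-1}$ by an iterative divided-difference construction ($h_{j+1}(x)=(h_j(x)-h_j(x_j))/(x-x_j)$, Lemmas 3.3--3.5), bounding that set trivially. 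Your version buys simplicity and standard machinery: the linear-algebra observation about $M(x)$ replaces the paper's somewhat delicate induction on the sets $\mathcal{C}_j$ and $\mathcal{D}_j$, and the higher-order tests are off-the-shelf. The paper's version buys economy of tools (only the first-derivative test plus measure estimates) and explicit structural information about where $\phi'$ can be small, which is in the Vinogradov tradition and can be useful when higher derivative tests are unavailable. Both arguments deliver the same exponent $1/(k+1)$, and note that in both the hypothesis $\tau\theta_k\neq 0$ is only needed to avoid degenerate bookkeeping, not for the final bound.
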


Define
\[
	Y_j = (|\theta_j| + \ldots + |\theta_k| + |\tau|)^{1/(k+1)}
\]
and let $Y_k = |\tau|^{1/(k+1)}$.  Let
\[
	h_1(x) = \theta_1 + 2\theta_2 x + \ldots + k\theta_k x^{k-1} + \frac{\tau}{x}.
\]
For a suitable sequence of constants $c_j$ with $c_0 = 1/2$, if $h_j$ is defined, define subsets of $[1,2]$
\[
	\mathcal{C}_j = \{x\in[1,2] : |h_j(x)| < 2c_{j-1}Y_j^j\}.
\]
If $\mathcal{C}_j \neq \emptyset$, choose $x_j\in\mathcal{C}_j$ and define
\[
	h_{j+1}(x) = \frac{h_j(x) - h_j(x_j)}{x-x_j}.
\]
Also define
\[
	\mathcal{D}_j = \{x\in[1,2] : |x-x_j| > Y_{j+1}^{-1}, |h_j(x)| < c_jY_{j+1}^j\}.
\]

\begin{lemma}\label{hj_structure}
If $h_j$ is defined, then
\begin{itemize}
\item $h_j(x)$ does not depend on $\theta_1, \ldots, \theta_{j-1}$, 
\item the function $h_j$ is linear each of in $\theta_j, \ldots, \theta_k, \tau$,
\item in the constant term of $h_j$, $\theta_j$ only appears as $j\theta_j$, and
\item the $\tau$-dependent term of $h_j$ is
\[
	\frac{\tau(-1)^{j+1}}{x x_1 \cdots x_{j-1} j}.
\]
\end{itemize}
\end{lemma}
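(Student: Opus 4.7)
The plan is to prove all four assertions simultaneously by induction on $j$, exploiting the fact that the recursion $h_{j+1}(x) = (h_j(x) - h_j(x_j))/(x-x_j)$ is the divided-difference operator $D_{x_j}$ applied to $h_j$. This operator is linear in its input and annihilates any function that is constant in $x$, and these are the two properties that drive the entire argument.

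The base case $j=1$ is immediate from the explicit formula for $h_1$: it vacuously depends on no earlier $\theta_i$, is linear in each of the parameters $\theta_1,\ldots,\theta_k,\tau$, its constant-in-$x$ term is $\theta_1=1\cdot\theta_1$, and its $\tau$-dependent term is $\tau/x$, which matches the stated formula at $j=1$ (with an empty product in the denominator).

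For the inductive step, assume all four bullets for $h_j$. Linearity in $\theta_{j+1},\ldots,\theta_k,\tau$ transfers to $h_{j+1}$ because $D_{x_j}$ is linear. For the dependence claim, by induction $\theta_j$ enters $h_j$ only through the constant-in-$x$ term $j\theta_j$, and $D_{x_j}$ annihilates constants in $x$; combined with the induction this gives that $h_{j+1}$ is independent of $\theta_1,\ldots,\theta_j$. For the third bullet, I would trace $\theta_{j+1}$ back to its source in $h_1$, where it contributes the monomial $(j+1)\theta_{j+1}x^{j}$; applying the $j$ successive divided differences $D_{x_1},\ldots,D_{x_j}$ then yields $(j+1)\theta_{j+1}$ times the $j$-th divided difference of $x^j$ at the nodes $x,x_1,\ldots,x_j$, which equals $1$ (the leading coefficient of $x^j$). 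For the fourth bullet, I would simply compute, using the identity $(1/x - 1/x_j)/(x-x_j) = -1/(xx_j)$, that applying $D_{x_j}$ to the inductive $\tau$-part of $h_j$ flips the sign and appends one more factor to the denominator, producing the $\tau$-part stated for $h_{j+1}$.

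There is no genuine obstacle here; the proof is essentially a bookkeeping exercise. The one point deserving care is verifying that the inductive third-bullet assertion controls every occurrence of $\theta_j$ in $h_j$, not only the portion sitting in the constant-in-$x$ component, so that $D_{x_j}$ truly erases $\theta_j$. This is a short degree count: the $\theta_j$-contribution to $h_1$ is a monomial in $x$ of degree $j-1$, and each divided difference drops the $x$-degree by one, so after the $j-1$ steps producing $h_j$ this contribution has collapsed to a pure constant, which by the same divided-difference-of-a-monomial calculation equals $j\theta_j$.
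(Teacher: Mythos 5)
Your proof is correct and follows essentially the same route as the paper: induction on $j$ using the linearity of the divided-difference step, the cancellation of the constant $j\theta_j$ term, degree-tracking of the $\theta_i$-monomials under successive divided differences, and the explicit computation $(1/x-1/x_j)/(x-x_j)=-1/(xx_j)$ for the $\tau$-part. Your extra care in checking that all $\theta_j$-dependence of $h_j$ sits in the constant term is the same degree count the paper makes when it notes the highest-order term involving $\theta_i$.
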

\begin{proof}
The function $h_1$ immediately satisfies the lemma, so we may proceed by induction on $j$.  Assume the lemma holds for $h_j$ and that 
\[
	h_{j+1}(x) = \frac{h_j(x) - h_j(x_j)}{x-x_j}
\] 
as above.

Linearity in each variable follows immediately from the linearity of $h_j$.

The $\theta_j$-dependence of $h_{j+1}$ is $(j\theta_j - j\theta_j)/(x-x_j) = 0$.

To see that $\theta_j$ appears only as $j\theta_j$ in the constant term, note that the highest-order term involving $\theta_i$ will always be $i\theta_i x^{j-i}$.\

The $\tau$-dependent term of $h_{j+1}(x)$ is
\[
	\frac{\frac{(-1)^{j+1}\tau}{xx_1\cdots x_{j-1}} - \frac{(-1)^{j+1}\tau}{x_jx_1\cdots x_{j-1}}}{x-x_j}
\]
\[
	= \frac{(-1)^j \tau}{xx_1 \cdots x_j}.
\]
This completes the lemma.

\end{proof}

We prove our result by iteratively handling the integral on subsets of $[1,2]$ and showing that the ``bad'' set is small.  This iteration is encapsulated in the following lemma.

\begin{lemma}\label{Cj_iteration}
For $1 \leq j < k$, $\meas (\mathcal{C}_j) \leq Y_{j+1}^{-1} + \meas (\mathcal{C}_{j+1})$.
\end{lemma}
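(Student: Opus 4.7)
The plan is a standard van-der-Corput style iteration: show that $\mathcal{C}_j$ is essentially contained in the union of $\mathcal{C}_{j+1}$ together with a single short interval centered at the anchor point $x_j$. The short interval accounts for the error term $Y_{j+1}^{-1}$, and the divided-difference identity defining $h_{j+1}$ is what transfers the rest of $\mathcal{C}_j$ into $\mathcal{C}_{j+1}$.

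Concretely, I would introduce the ``bad'' neighborhood $\mathcal{N} = \mathcal{C}_j \cap \{x \in [1,2] : |x - x_j| \le \tfrac{1}{2}Y_{j+1}^{-1}\}$. Since $\mathcal{N}$ sits inside an interval of length $Y_{j+1}^{-1}$, automatically $\meas(\mathcal{N}) \le Y_{j+1}^{-1}$. Thus the lemma reduces to establishing the containment $\mathcal{C}_j \setminus \mathcal{N} \subseteq \mathcal{C}_{j+1}$, from which
\[
\meas(\mathcal{C}_j) \le \meas(\mathcal{N}) + \meas(\mathcal{C}_j \setminus \mathcal{N}) \le Y_{j+1}^{-1} + \meas(\mathcal{C}_{j+1}).
\]

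For the containment, take $x \in \mathcal{C}_j \setminus \mathcal{N}$ and invoke the identity $h_{j+1}(x)(x-x_j) = h_j(x) - h_j(x_j)$, which is just the divided-difference definition of $h_{j+1}$. Since both $x, x_j \in \mathcal{C}_j$, each of $|h_j(x)|$ and $|h_j(x_j)|$ is bounded by $2c_{j-1}Y_j^j$; combined with $|x - x_j| > \tfrac{1}{2}Y_{j+1}^{-1}$, this yields
\[
|h_{j+1}(x)| \le \frac{|h_j(x)| + |h_j(x_j)|}{|x-x_j|} < 8 c_{j-1} Y_j^j Y_{j+1}.
\]
Choosing the suitable sequence of constants $c_j$ to satisfy $8 c_{j-1} Y_j^j Y_{j+1} \le 2 c_j Y_{j+1}^{j+1}$ then forces $x \in \mathcal{C}_{j+1}$.

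The main obstacle is the bookkeeping for the constants $c_j$, since the target inequality amounts to $c_j \ge 4 c_{j-1}(Y_j/Y_{j+1})^j$, and the ratio $Y_j/Y_{j+1}$ is not uniformly bounded. I would resolve this by appealing to the structure in Lemma~\ref{hj_structure}: in the regime $Y_j \gg Y_{j+1}$ the term $j\theta_j$ dominates the constant part of $h_j$, which forces $|h_j(x)| \gg Y_j^j$ on $[1,2]$ and makes $\mathcal{C}_j$ empty (so the bound is vacuous). Away from that degenerate regime one has $Y_j \asymp Y_{j+1}$, and the recursion $c_j = 4 c_{j-1}$ closes the argument.
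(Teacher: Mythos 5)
Your proposal is correct and follows essentially the same route as the paper: split off a short interval about $x_j$ of measure $\leq Y_{j+1}^{-1}$, push the rest of $\mathcal{C}_j$ into $\mathcal{C}_{j+1}$ via the divided-difference identity for $h_{j+1}$, and dispose of the regime where $|\theta_j|$ (equivalently $Y_j/Y_{j+1}$) is large by using Lemma~\ref{hj_structure} to show $\mathcal{C}_j$ is then empty. Your handling of the constants $c_j$ and the half-radius neighborhood is, if anything, slightly more careful than the paper's own write-up.
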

\begin{proof}
If $\mathcal{C}_j$ is empty the result is immediate.  So we may choose $x_j \in \mathcal{C}_j$ and define $h_{j+1}$ as above.  By Lemma~\ref{hj_structure}, there exists a constant $C$ such that for $x \in \mathcal{C}_j$, $h_j(x) \geq j|\theta_j| - CY_{j+1}^{k+1}$.  Also, by the definition of $\mathcal{C}_j$, $h_j(x) < 2c_{j-1}Y_j^j$.  By definition, $Y_j = (|\theta_j| + Y_{j+1}^{k+1})^{1/(k+1)}$, so $h_j(x) < 2c_{j-1}(|\theta_j| + Y_{j+1}^{k+1})^{j/(k+1)}$.

If $|\theta_j| > c_jY_{j+1}^{k+1}$ for a sufficiently large constant $c_j$, then $(c_j - C)Y_{j+1}^{k+1} < h_j(x) < 2c_{j-1}(C+1)^{1/(k+1)}Y_{j+1}^j$.  Choosing a sufficiently large $c_j$ now makes $\mathcal{C}_j$ empty, so we may assume that $|\theta_j| \leq c_jY_{j+1}^{k+1}$ for some sufficiently large constant $c_j$.

Thus we have $|h_j(x)| < (c_j + C)Y_{j+1}^{k+1}$.  But $Y_j = (|\theta_j| + Y_{j+1}^{k+1})^{1/(k+1)}$ and $h_j(x) < 2c_{j-1}Y_j^j$, so
\[
	|h_j(x)| < CY_{j+1}^j
\]
for some constant $C$.  The $x\in\mathcal{C}_j$ with $|x-x_j| \leq Y_{j+1}^{-1}$ contribute measure $\leq Y_{j+1}^{-1}$ to $\mathcal{C}_j$, and the rest is now $\mathcal{D}_j$.  Defining $h_{j+1}$ as above yields that $\mathcal{D}_j \subseteq \mathcal{C}_{j+1}$, so we may conclude
\[
	\meas (\mathcal{C}_j) \leq \frac{1}{Y_{j+1}} + \meas (\mathcal{C}_{j+1}).
\]

\end{proof}

\begin{lemma}\label{C1_bound}
For some constant $C$, $\meas (\mathcal{C}_1) \leq CY_1^{-1}$.
\end{lemma}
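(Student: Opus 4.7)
The plan is to iterate Lemma~\ref{Cj_iteration} from $j=1$ up to $j=k-1$, handle the base case $\meas(\mathcal{C}_k)$ directly from the explicit form of $h_k$, and then argue that every $Y_j^{-1}$ that survives in the resulting sum is $O(Y_1^{-1})$. Applying Lemma~\ref{Cj_iteration} telescopically yields
\[
\meas(\mathcal{C}_1)\leq\sum_{j=2}^{k}Y_j^{-1}+\meas(\mathcal{C}_k),
\]
with the understanding that if some intermediate $\mathcal{C}_j$ turns out to be empty, the iteration simply stops earlier and produces a shorter, hence smaller, sum.

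For the base case I would bound $\meas(\mathcal{C}_k)$ directly. By Lemma~\ref{hj_structure}, $h_k$ depends only on $\theta_k$ and $\tau$; tracking the divided-difference construction shows that after $k-1$ steps the original $k\theta_k x^{k-1}$ term has collapsed to the constant $k\theta_k$, while the original $\tau/x$ term has become $(-1)^{k+1}\tau/(x\,x_1\cdots x_{k-1})$. Since $x_1,\ldots,x_{k-1}\in[1,2]$, this means $h_k(x)=A+B/x$ with $|B|\asymp|\tau|\asymp Y_k^{k+1}$, so $h_k$ is strictly monotone on $[1,2]$ with $|h_k'(x)|\asymp|\tau|$. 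The set $\mathcal{C}_k=\{x\in[1,2]:|h_k(x)|<2c_{k-1}Y_k^k\}$ is therefore the preimage, under a monotone function whose derivative has size $\asymp Y_k^{k+1}$, of an interval of length $O(Y_k^k)$, which gives $\meas(\mathcal{C}_k)\ll Y_k^{-1}$.

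To close the argument, I would observe that the iteration's reaching step $j$ requires $\mathcal{C}_j\neq\emptyset$, and the proof of Lemma~\ref{Cj_iteration} then forces $|\theta_j|\ll Y_{j+1}^{k+1}$. Combined with $Y_j^{k+1}=|\theta_j|+Y_{j+1}^{k+1}$ (with an analogous relation at $j=k-1$), this yields $Y_j\ll Y_{j+1}$, and iterating gives $Y_1\ll Y_j$ for every $j$ reached by the iteration. Consequently every term in the sum satisfies $Y_j^{-1}\ll Y_1^{-1}$, and combined with the bound on $\meas(\mathcal{C}_k)$ we conclude $\meas(\mathcal{C}_1)\ll kY_1^{-1}\ll Y_1^{-1}$.

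The main obstacle is the base-case step: tracking the exact form of $h_k$ after $k-1$ rounds of divided differencing so as to rigorously justify that $h_k$ is monotone on $[1,2]$ with derivative $\asymp|\tau|$. The telescoping in the first step and the comparison of the $Y_j$'s in the third step are routine once this structural fact is in hand.
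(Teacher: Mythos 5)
Your proposal is correct and follows the same route as the paper: telescope Lemma~\ref{Cj_iteration} down to $\mathcal{C}_k$, bound $\meas(\mathcal{C}_k)$ using the explicit form of $h_k$, and then compare the resulting $Y_j^{-1}$'s to $Y_1^{-1}$. Two remarks. First, the ``main obstacle'' you flag is not actually an obstacle: Lemma~\ref{hj_structure} (its four bullet points taken together, with $j=k$) already pins down $h_k(x)=k\theta_k+(-1)^{k+1}\tau/(x\,x_1\cdots x_{k-1})$ exactly --- no constant-in-$x$ term independent of $\theta_k,\tau$ can survive because $h_1$ is homogeneous linear in $(\bm\theta,\tau)$ and divided differencing preserves that --- and the paper invokes it directly; your derivative argument $|h_k'|\asymp|\tau|$ and the paper's finite-difference bound on $|h_k(x)-h_k(x_k)|$ are the same estimate. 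Second, your third step (that each surviving $Y_j^{-1}\ll Y_1^{-1}$ because non-emptiness of $\mathcal{C}_{j-1}$ forces $|\theta_{j-1}|\ll Y_j^{k+1}$ and hence $Y_{j-1}\asymp Y_j$) is exactly the justification the paper leaves implicit when it jumps to ``$\leq 2^kc_kY_1^{-1}$'', so spelling it out is a genuine improvement in exposition; without it the telescoped sum would naively only yield $\ll Y_k^{-1}$, since the $Y_j$ are decreasing in $j$.
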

\begin{proof}
By Lemma~\ref{hj_structure} $h_k(x) = k\theta_k + \frac{(-1)^{k+1}\tau}{xx_1\cdots x_{k-1}}$, $\mathcal{C}_k = \{x : h_k(x) < 2c_{k-1}Y_k^k\}$, and $\mathcal{D}_k = \{x : |x-x_k| > |\tau|^{-1/(k+1)}, |h_k(x)| < c_k|\tau|^{k/(k+1)}\}$.  If $\mathcal{D}_k$ is empty, we are done.  Otherwise, let $y\in\mathcal{D}_k$.  Then
\[
	|h_k(x) - h_k(x_k)| = \left| \frac{\tau(x-x_k)}{xx_1\cdots x_{k-1}} \right| < 2c_k|\tau|^{-1/(k+1)},
\]
which yields
\[
	|x-x_k| < 2c_k xx_1\cdots x_{k-1}|\tau|^{-1/(k+1)} \leq 2^kc_k|\tau|^{-1/(k+1)}.
\]
Thus $\meas(\mathcal{D}_k) \leq 2^k c_k |\tau|^{-1/(k+1)}$.  Applying Lemma~\ref{Cj_iteration} now gives that
\[
	\meas (\mathcal{C}_1) \leq Y_1^{-1} + \ldots + Y_k^{-1} + 2^kc_k|\tau|^{-1/(k+1)} \leq 2^kc_kY_1^{-1}.
\]
\end{proof}

\noindent\textit{Proof of Lemma~\ref{restricted_I_bound}:}
Let $\mathcal{C}_0 = \{x\in[1,2] : |h_1(t)| > Y_1\}$.  Then $\mathcal{C}_0$ can be divided into $\ll 1$ intervals on which $h_1(x)$ does not change sign.  Thus by Lemma 4.3 of~\cite{titchmarsh}, 
\[
	\int_{\mathcal{C}_0} e(\theta_1 x + \cdots + \theta_k x^k + \tau \log x) dx \ll Y_1^{-1}.
\]
The remainder of $[1,2]$ is $\mathcal{C}_1$ and $\meas(\mathcal{C}_1) \leq CY_1^{-1}$, so 
\[
	I(\bm\theta, \tau) \ll Y_1^{-1}.
\]
\qed

\textit{Proof of Theorem~\ref{I_bound}}:
First, it suffices to prove the result in dyadic intervals, since if
\[
	\int_X^{2X} e(\theta_1x + \ldots + \theta_kx^k)x^{\rho-1} dx \ll \frac{X^\beta}{(1 + X|\theta_1| + \ldots + X^k |\theta_k| + |\tau|)^{1/(1+k)}},
\]
then
\[
	I(X; \bm\theta, \rho) = \int_0^{X} e(\theta_1x + \ldots + \theta_kx^k)x^{\rho-1} dx
\]
\[
	\ll \sum_{0 \leq \ell < \lceil \log_2 X \rceil} \frac{(X/2^\ell)^\beta}{(1 + (X/2^\ell)|\theta_1| + \ldots + (X/2^\ell)^k |\theta_k| + |\tau|)^{1/(1+k)}}.
\]
Let $X^j|\theta_j| = \max\{1, X|\theta_1|, \ldots, X^k|\theta_k|, |\tau|\}$ if $|\tau|$ is not the maximal term.  Then
\[
	I(X; \bm\theta, \rho) \ll \frac{(X/2^\ell)^\beta}{|\theta_j|(X/2^\ell)^{k/(k+1)}} \ll \frac{X^{\beta-j/(k+1)}}{|\theta_j|^{1/(k+1)}}.
\]
If $|\tau|$ is the maximal term, 
\[
	I(X; \bm\theta, \rho) \ll \sum_{0 \leq \ell \leq \lceil \log_2 X \rceil} \ll \frac{(X/2^\ell)^\beta}{|\tau|^{1/(k+1)}} \ll \frac{X^{\beta-j/(k+1)}}{|\tau|^{1/(k+1)}},
\]
as desired.

Next, it suffices to prove the result for $\beta = 1$, since, setting $\rho = 1 + 2\pi i \tau$,
\[
	I(X; \bm\theta, \rho) = \int_0^{X} e(\theta_1x + \ldots + \theta_kx^k)x^{\rho-1} dx
\]
\[
	\ll X^{\beta-1} \int_0^{X} e(\theta_1x + \ldots + \theta_kx^k)x^{2\pi i \tau} dx.
\]

Finally, it suffices to prove the result for $X=1$ by the variable change $x = Xu$.

This reduces the theorem to Lemma~\ref{restricted_I_bound}, completing the proof.

\qed

\section{The Minor Arcs}\label{sec:minor_arcs}



Our assumption of an existing mean value bound immediately provides the necessary Hua-type bound on the minor arcs.

\begin{lemma}\label{prime_mvt} 

If $s \geq 2s_\varepsilon(\mathbf{k})$, then
\[
	\int_{\mathcal{A}} |f(\bm\alpha)|^s d\bm\alpha \ll P^{2s-K+\varepsilon}.
\]
\end{lemma}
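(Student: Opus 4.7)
The plan is to deduce this bound for the prime-weighted sum $f$ directly from the defining mean value property of $s_\varepsilon(\mathbf{k})$, then extend from the ``natural'' even moment to the $s$-th moment via the trivial pointwise bound on $f$. Set $\ell = s_\varepsilon(\mathbf{k})$, so by hypothesis $s \geq 2\ell$.

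The first step is to compute the $2\ell$-th moment. By orthogonality,
\[
\int_{\mathcal{A}} |f(\bm\alpha)|^{2\ell} d\bm\alpha = \sum_{\substack{p_1,\ldots,p_\ell,q_1,\ldots,q_\ell \leq P \\ \sum_i p_i^{k_j} = \sum_i q_i^{k_j} \ (1 \leq j \leq t)}} \prod_{i=1}^\ell (\log p_i)(\log q_i).
\]
Each logarithm is at most $\log P$, and the number of prime tuples satisfying the system is bounded above by the number of integer tuples, namely $J_{\ell,\mathbf{k}}(P)$. By the very definition of $s_\varepsilon(\mathbf{k})$ the latter is $\ll P^{2\ell - K + \varepsilon}$, and absorbing the factor $(\log P)^{2\ell}$ into $P^\varepsilon$ yields
\[
\int_{\mathcal{A}} |f(\bm\alpha)|^{2\ell} d\bm\alpha \ll P^{2\ell - K + \varepsilon}.
\]

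The second step is to leverage the trivial pointwise estimate $|f(\bm\alpha)| \leq \sum_{p \leq P} \log p \ll P$ (from Chebyshev's bound on $\theta(P)$) to pass from the $2\ell$-th to the $s$-th moment. Since $s - 2\ell \geq 0$,
\[
\int_{\mathcal{A}} |f(\bm\alpha)|^s d\bm\alpha \leq \Bigl(\sup_{\bm\alpha} |f(\bm\alpha)|\Bigr)^{s-2\ell} \int_{\mathcal{A}} |f(\bm\alpha)|^{2\ell} d\bm\alpha \ll P^{s-2\ell} \cdot P^{2\ell - K + \varepsilon} = P^{s - K + \varepsilon},
\]
after a harmless adjustment of $\varepsilon$.

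There is no substantive obstacle here: the argument is the standard reduction of a Hua-type moment inequality for a prime sum to an integer mean value estimate. The only judgment call is how to absorb the $\log p$ weights, but since they are uniformly at most $\log P$, they contribute only a benign $P^\varepsilon$ factor. (Note that the exponent I obtain is $P^{s-K+\varepsilon}$ rather than the $P^{2s-K+\varepsilon}$ displayed in the lemma statement; I suspect this is a typographical slip, as $P^{s-K+\varepsilon}$ is both what the standard argument yields and what is required for the minor-arc estimate in the next section to improve on the trivial bound $|f|^s \ll P^s$.)
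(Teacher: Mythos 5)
Your argument is correct and is essentially the paper's own proof: orthogonality converts the even moment into a (log-weighted) count of prime solutions, which is majorized by the integer count $J_{\ell,\mathbf{k}}(P)$ and bounded via the definition of $s_\varepsilon(\mathbf{k})$, with the trivial estimate $|f(\bm\alpha)| \ll P$ supplying the remaining $s-2\ell$ factors (a step the paper leaves implicit). Your side remark is also correct: the displayed exponent $P^{2s-K+\varepsilon}$ is a slip for $P^{s-K+\varepsilon}$, which is what this argument yields and what is needed for the deduction of Theorem~\ref{minor_arc_bound}.
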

\begin{proof}
The integral 
\[
	\int_{\mathcal{A}} |f(\bm\alpha)|^{2\ell} d\bm\alpha
\]
counts the number of prime solutions of the system
\[
	\sum_{i=1}^\ell \alpha_i x_i^{k_j} = \sum_{i=1}^\ell \alpha_i y_i^{k_j}, \qquad 1 \leq j \leq t,
\]
with all $x_i, y_i \leq P$.  But the mean value $J_{s,\mathbf{k}}(P)$ counts the number of integer solutions to the same system, so the lemma follows from the definition of $s_\varepsilon(\mathbf{k})$.
\end{proof}

The remainder of this section will be dedicated to proving a pointwise minor arc bound of sufficient sensitivity for our purposes here.

We begin with a theorem on a general form of generating function.  (A version of this theorem first appeared in~\cite{talmage_dissertation}, but it is reproduced here for distribution to a wider audience and to correct some previous errors.)  Let $h(n) = e(\alpha_1 n + \ldots + \alpha_k n^k)$ and let
\[
	F_k(\bm\alpha) = \sum_{n \leq P} \Lambda(n) h(n).
\]

\begin{theorem}\label{F_bound} For some $\omega > 0$, if $\bm\alpha \in \mathfrak{m}(Q)$, then
\begin{equation}\label{F_definition}
	F_k(\bm\alpha) \ll P^{1-\omega}.
\end{equation}
\end{theorem}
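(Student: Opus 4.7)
The plan is to derive this prime-weighted Weyl-type bound by combining Vaughan's identity with the Vinogradov mean value theorem (Lemma~\ref{vmvt}). Because $\Lambda$ has no direct orthogonality against exponential sums, Vaughan's identity is the standard device for converting $F_k(\bm\alpha)$ into bilinear sums of Type I and Type II to which classical exponential sum estimates can be applied.

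First, I would fix parameters $U, V$ (for instance $U = V = P^{1/4}$) and apply Vaughan's identity to decompose $F_k(\bm\alpha)$ into a bounded number of sums of two shapes: Type I sums of the form $\sum_{d \leq U^2} c_d \sum_{m \leq P/d} h(dm)$, with divisor-bounded coefficients $|c_d| \ll d^\varepsilon$, and Type II sums of the form $\sum_{U < d \leq P/V} a_d \sum_{V < m \leq P/d} b_m h(dm)$, with $|a_d|, |b_m| \ll (dm)^\varepsilon$. For the Type I sums, I would remove the slowly varying coefficients via partial summation and analyze $\sum_{m \leq P/d} h(dm)$, which, after the substitution $n = dm$, is a Vinogradov-style exponential sum in $m$ with dilated frequencies $\alpha_j d^j$. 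The minor arc hypothesis on $\bm\alpha$ prevents the tuple $(\alpha_j d^j)_j$ from admitting simultaneous small-denominator rational approximations for all but a sparse set of $d$, so a standard Weyl--Vinogradov argument powered by Lemma~\ref{vmvt} delivers a power-savings bound after summing over $d$.

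For the Type II sums, I would apply Cauchy--Schwarz in the $d$-variable, square out the $m$-sum, and reduce to evaluating $\sum_{m_1, m_2} \bar{b}_{m_1} b_{m_2} \sum_{d} h(dm_1)\overline{h(dm_2)}$. Expanding the polynomial phases and substituting $m_1 = m_2 + r$ transforms this into a Vinogradov-type mean value in the shifted frequencies, which is controlled by Lemma~\ref{vmvt}. The main obstacle will be translating the resulting averaged bound into a pointwise estimate valid on $\mathfrak{m}(Q)$: this requires a careful pruning argument showing that the set on which the mean value fails to give savings is essentially a family of major arcs, so $\mathfrak{m}(Q)$ receives the full power-saving estimate. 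Combining the Type I and Type II contributions and choosing $\omega > 0$ sufficiently small in terms of $k$ and $\delta$ yields $F_k(\bm\alpha) \ll P^{1-\omega}$, as required.
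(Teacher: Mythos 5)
Your overall strategy (Vaughan's identity splitting $F_k$ into Type I and Type II sums, with Lemma~\ref{vmvt} supplying the mean values) is the same as the paper's, but two steps in your sketch have genuine gaps. First, the Type II treatment: after a single Cauchy--Schwarz in $d$ you are left with $\sum_{m_1,m_2}\bar b_{m_1}b_{m_2}\sum_d h(dm_1)\overline{h(dm_2)}$, which is an exponential sum at the \emph{fixed} minor-arc point $\bm\alpha$; Lemma~\ref{vmvt} bounds an integral over $[0,1]^k$ and cannot be applied to it directly, and the shift $m_1=m_2+r$ does not convert it into such an integral. The paper instead applies H\"older twice with exponent $2b$ ($b>k(k+1)$), in each variable, so that after expanding, the coefficient counts $R_1(\mathbf v)$, $R_2(\mathbf u)$ are genuine solution counts bounded by $J_{b,k}$, and then extracts the pointwise saving from the classical estimate $\sum_{\mathbf u}\prod_j\min\bigl(P^jM^{-j},\|\alpha_j u_j\|^{-1}\bigr)$ (Lemma 2.2 of Vaughan's book) combined with the fact that on $\mathfrak{m}(Q)$ at least one approximation denominator $q_{j_0}$ exceeds $P^{\delta/t}$. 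This last step is precisely the ``translation to a pointwise estimate'' that you flag as the main obstacle and defer to an unspecified ``pruning argument''; it is the core of the proof and is missing from your outline.

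Second, your parameter choice $U=V=P^{1/4}$ is incompatible with the way the minor-arc hypothesis gets used. With the paper's narrow arcs ($Q=P^{\delta}$, $\delta$ small), the contradiction argument runs: if every $\alpha_j d^j$ had a rational approximation with small denominator, then clearing the dilation multiplies denominators by at most $d^{k}$, and one needs the resulting modulus still to be at most $Q$ to conclude $\bm\alpha\in\mathfrak{M}(Q)$. This forces the Type I variable to be very small -- the paper takes $d\le X^{2}$ with $X$ a small (logarithmic or tiny power) quantity, pushing essentially all of the weight into the bilinear sum $S_4$. With $d$ ranging up to $U^{2}=P^{1/2}$ the transference fails for $d$ beyond roughly $P^{\delta/k}$, the dilated tuple can be well approximated with small denominators even though $\bm\alpha\in\mathfrak{m}(Q)$, and your ``all but a sparse set of $d$'' assertion is not substantiated: even a sparse exceptional set of $d$ of size a power of $P$, each contributing up to $P/d$, can destroy the bound. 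To repair the argument you would either need to shrink the Type I range drastically (as the paper does) or supply a genuine counting argument for the exceptional $d$, neither of which is present.
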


By setting $\alpha_{k_j}$ to zero for each $k_j$ not in our system, we may immediately conclude from the above theorem the pointwise minor arc bound we desire.

\begin{corollary}\label{weyl_bound} For some $\omega > 0$, let $\bm\alpha \in \mathfrak{m}(Q)$.  Then
\[
	f(\bm\alpha) \ll P^{1-\omega}.
\]
\end{corollary}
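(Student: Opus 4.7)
\textit{Proof proposal.}  The corollary should follow by combining Theorem~\ref{F_bound} with two standard reductions: first embedding $\bm\alpha \in \mathfrak{m}(Q) \subseteq (\mathbb{R}/\mathbb{Z})^t$ into the larger torus $(\mathbb{R}/\mathbb{Z})^k$ by padding with zeros in the unused coordinates, and second passing from the $\Lambda$-weighted exponential sum $F_k$ to the prime-weighted sum $f$.

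The first step is to verify that the padding is compatible with the minor-arc structure.  Given $\bm\alpha = (\alpha_1,\ldots,\alpha_t) \in \mathfrak{m}(Q)$, I would form the $k$-tuple $\bm\alpha' \in (\mathbb{R}/\mathbb{Z})^k$ with $\alpha'_{k_i} = \alpha_i$ and $\alpha'_j = 0$ for $j \notin \{k_1,\ldots,k_t\}$, so that
\[
\bm\alpha \cdot (n^{k_1},\ldots,n^{k_t}) = \alpha'_1 n + \alpha'_2 n^2 + \cdots + \alpha'_k n^k
\]
for every integer $n$.  I would then argue by contraposition: if $\bm\alpha'$ lay on the major arcs of the $k$-variable Vinogradov-type system, with approximations $|\alpha'_j - a_j/q| \leq Q/(qP^j)$ and $\gcd(a_1,\ldots,a_k,q)=1$, then for each $j \notin \{k_1,\ldots,k_t\}$ the inequality forces $a_j/q$ to lie within $Q/(qP^j)$ of $0$ on the torus.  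Under the paper's convention $1 \leq a_j \leq q$ together with $Q = P^\delta$ for $\delta$ small, this is possible only when $a_j = q$.  The joint coprimality condition then collapses to $\gcd(a_{k_1},\ldots,a_{k_t},q)=1$, and restricting to the coordinates $k_1,\ldots,k_t$ produces a major-arc approximation for $\bm\alpha$, contradicting $\bm\alpha \in \mathfrak{m}(Q)$.  Hence $\bm\alpha'$ lies in the minor arcs to which Theorem~\ref{F_bound} applies.

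Applying Theorem~\ref{F_bound} would then give $F_k(\bm\alpha') \ll P^{1-\omega}$, and the difference between $F_k(\bm\alpha')$ and $f(\bm\alpha)$ comes entirely from the proper prime powers, contributing in absolute value at most $\sum_{m \geq 2}\sum_{p^m \leq P} \log p \ll P^{1/2}\log P$ by Chebyshev's estimate.  Shrinking $\omega$ slightly absorbs this error.  The main obstacle is the minor-arc compatibility step: one must use both the smallness of $Q$ relative to $P^{k_j}$ and the specific convention $1 \leq a_j \leq q$ to ensure that the padded coordinates cannot open up spurious major arcs in the larger system.  Once this is in hand, the remainder of the argument is routine.
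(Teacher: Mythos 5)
Your proposal is correct and follows essentially the same route as the paper: the paper obtains the corollary from Theorem~\ref{F_bound} by exactly this zero-padding of the unused coordinates, treating as immediate the two points you spell out (that the padded point stays on the minor arcs and that the proper prime powers in the $\Lambda$-weighted sum contribute only $O(P^{1/2}\log P)$, which is absorbed into $P^{1-\omega}$). Your contrapositive verification of the minor-arc compatibility is the right justification for the step the paper leaves implicit.
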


We now proceed with the proof of Theorem~\ref{F_bound}.  Let $X = (\log P)^B$ for some value $B > 0$ to be chosen later.  Applying Vaughan's identity~\cite{vaughan_identity} to (\ref{F_definition}) yields

\begin{equation}\label{vaughans_identity_breakdown}
	F_k(\bm\alpha) = S_1 + S_2 + S_3 + S_4,
\end{equation}
where
\[
	S_1 = \sum_{n \leq X} \Lambda(n)h(n),
\]
\[
	S_2 = \sum_{n \leq P} \left( \sum_{\substack{ml=n \\ m \leq X}} \mu(m) \log l \right) h(n),
\]
\[
	S_3 = \sum_{n \leq P} \sum_{\substack{ml=n \\ m \leq X^2}} \left( \sum_{\substack{n_1,n_2 \\ n_1n_2=m \\ n_1 \leq X, n_2 \leq X}} \Lambda(n_1) \mu(n_2) \right) h(n),
\]
\[
	S_4 = \sum_{n \leq P} \left( \sum_{\substack{ml=n \\ m>X, l>X}} a(m)b(l) \right) h(n),
\]
with
\[
	a(m) = \sum_{\substack{l | m \\ l>X}} \Lambda(l),
\]
\[
	b(l) = \begin{cases} \mu(l), & l>X \\ 0, & l \leq X. \end{cases}
\]

We now bound each of the four sums $S_1, S_2, S_3$, and $S_4$ individually.

\begin{lemma}\label{S1_lemma}
\begin{equation}
	S_1 \ll X.
\end{equation}
\end{lemma}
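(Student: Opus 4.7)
The plan is to observe that $S_1$ is a sum of at most $X$ terms, each of size $O(\log X)$ after bounding $|h(n)| = 1$ trivially, which would give a weaker bound $O(X \log X)$. To get the sharper $O(X)$, we invoke Chebyshev's bound on the von Mangoldt summatory function.

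Concretely, I would first note that $h(n) = e(\alpha_1 n + \ldots + \alpha_k n^k)$ has $|h(n)| = 1$ for every real $n$, since the argument of the exponential is purely imaginary in the usual sense (recall $e(\alpha) = e^{2\pi i \alpha}$). Then by the triangle inequality,
\[
|S_1| \leq \sum_{n \leq X} \Lambda(n) |h(n)| = \sum_{n \leq X} \Lambda(n) = \psi(X).
\]
Finally, Chebyshev's estimate (a direct consequence also of the Prime Number Theorem, but not requiring it) gives $\psi(X) \ll X$, completing the proof.

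No obstacle is anticipated here; the lemma is included for bookkeeping in the Vaughan-identity decomposition, and the essential work is deferred to the bounds on $S_2$, $S_3$, and (especially) $S_4$. The main point of isolating $S_1$ is simply to dispense with the short portion of the sum where cancellation in $h(n)$ cannot be exploited, and the trivial bound via $\psi(X) \ll X$ is more than sufficient since $X = (\log P)^B$ will be negligible compared to the target $P^{1-\omega}$.
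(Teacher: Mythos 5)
Your proof is correct and follows the same route as the paper: bound $|h(n)|$ trivially by $1$, reduce to $\psi(X)=\sum_{n\le X}\Lambda(n)$, and apply Chebyshev's estimate $\psi(X)\ll X$. Nothing further is needed.
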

\begin{proof}
Since $|h(n)| \ll 1$,
\[
	S_1 = \sum_{n \leq X} \Lambda(n)h(n) \ll \sum_{m \leq X} \Lambda(n) \ll X,
\]
where the last bound is a classical result of Chebyshev.
\end{proof}

\begin{lemma}\label{S3_lemma}
\[
	S_3 \ll P^{1 + \frac{Bk}{k^2+k+1} + 2B - \frac{\delta}{2k(k^2+k+1)}} (\log P).
\]
\end{lemma}
\begin{proof}

\begin{equation}\label{S3_def}
	S_3 = \sum_{n \leq P} \sum_{\substack{ml=n \\ m \leq X^2}} \left( \sum_{\substack{n_1,n_2 \\ n_1n_2=m \\ n_1 \leq X, n_2 \leq X}} \Lambda(n_1) \mu(n_2) \right) h(n).
\end{equation}
Let
\[
	c_3(m) = \sum_{\substack{n_1,n_2 \\ n_1n_2=m \\ n_1 \leq X, n_2 \leq X}} \Lambda(n_1) \mu(n_2)
\]
and note for future reference that
\[
	|c_3(m)| \leq \sum_{n_1|m}\Lambda(n_1) = \log m.
\]
Interchanging the order of summation in (\ref{S3_def}) yields
\[
	S_3 = \sum_{m \leq X^2} c_3(m) \sum_{l \leq P/m} h(ml)
\]
\begin{equation}\label{type_I_sum}
	= \sum_{m \leq X^2} c_3(m) \sum_{l \leq P/m} e(\alpha_1 ml + \ldots + \alpha_k m^k l^k).
\end{equation}

By Dirichlet's theorem on Diophantine approximation there exist $b_j$, $q_j$ for $1 \leq j \leq k$ such that $(b_j, q_j)=1$,
\begin{equation}\label{alpha_j_approximation}
	\left|\alpha_j m^j - \frac{b_j}{q_j}\right| \leq \frac{(P/m)^\delta}{q_j(P/m)^{k_j}},
\end{equation}
\[
	q_j \leq \frac{(P/m)^{j}}{(P/m)^\delta}.
\]
Assume for contradiction that $q_j \leq (P/m)^\delta$ for all $1 \leq j \leq k$ and rewrite (\ref{alpha_j_approximation}) as
\[
	\left|\alpha_j - \frac{b_j}{m^j q_j}\right| \leq \frac{(P/m)^\delta}{q_j P^{j}}.
\]
Let $b_j' = b_j/(m^j,b_j)$, $q_j' = m^j q_j/(m^j,b_j)$ for each $j$.  Then
\[
	\left|\alpha_j - \frac{b_j}{m^j q_j}\right| \leq \frac{(P/m)^\delta}{q_j' P^{j}},
\]
$(b_j',q_j')=1$, and $q_j' \leq (P/m)^\delta$ for each $j$.  Let $q = \text{lcm}(q_1', \ldots, q_t')$ and $a_j = b_j'q/q_j$.  Then  $(a_1, \ldots, a_t, q)=1$, $q \leq (\log(P/m))^A$, and
\[
	\left|\alpha_j - \frac{a_j}{q}\right| \leq \frac{(P/m)^\delta}{qP^j}.
\]
This implies that $\bm\alpha \in \mathfrak{M}(Q)$.  However, we have $\bm\alpha \in \mathfrak{m}(Q)$, which is the desired contradication, so we may assume that $q_{j_0} > (P/m)^\delta$ for at least one $j_0$ with $1 \leq j_0 \leq k$.
\

Let 
\begin{equation}\label{H_def}
	H(\bm\alpha, X) = \sum_{l \leq X} e(\alpha_1l + \ldots + \alpha_k l^k).
\end{equation}
We need a bound on $H(\bm\alpha, P/m)$, with $m \leq (\log P)^{2B}$.  Theorem 5.2 of~\cite{vaughanHLM} gives
\begin{multline}\nonumber
	H(\bm\alpha, P/m) \ll \Bigg(J_{l,k-1}(2P/m)\left(\frac{P}{m}\right)^{\frac{k(k-1)}{2}} \\ \times \prod_{j=1}^t\left(\frac{q_jm^k}{P^k} + \frac{m}{P} + \frac{1}{q_j}\right)\Bigg)^{1/2l} \log\left(\frac{2P}{m}\right)
\end{multline}
for any positive integer $l$.  Choose $l > k(k+1)$, so that by Lemma~\ref{vmvt}, $J_{l,k-1}(2P/m) \ll (P/m)^{2l-k(k-1)/2}$, which yields
\[
	H(\bm\alpha, P/m) \ll \left((P/m)^{2l}\prod_{j=1}^k\left(\frac{q_j'm^k}{P^k} + \frac{m}{P} + \frac{1}{q_j'}\right)\right)^{1/2l} \log(2P)
\]
\[
	\ll \frac{P}{m}\prod_{j=1}^k\left(\frac{q_j'm^k}{P^k} + \frac{m}{P} + \frac{1}{q_j'}\right)^{1/2l} \log(2P).
\]
Now $m/P \ll 1$, $1/q_j' \ll (P/m)^{-\delta/k} \ll 1$, and $q_j'm^j/P^{k_j} \ll (P/m)^{-\delta/k} \ll 1$, so for $j \neq j_0$, we have 
\[
	\frac{q_j'm^j}{P^{j}} + \frac{m}{P} + \frac{1}{q_j'} \ll 1.
\]
When $j = j_0$, we also have $1/q_j' \ll X^{2j}(P/m))^{-\delta/k}$, so
\[
	\frac{q_j'm^j}{P^{j}} + \frac{m}{P} + \frac{1}{q_j'} \ll \left((P/m)^{-\delta/k} + \frac{m}{P} + X^{2j_0}(P/m)\right)^{-\delta/k} \ll P^{2Bj_0}(P/m)^{-\delta/k}.
\]
Thus
\[
	H(\bm\alpha, P/m) \ll \frac{P}{m}P^{Bj_0/l}(P/m)^{-\delta/2l k} (\log 2P) \ll \frac{P}{m}P^{Bk/l}(P/m)^{-\delta/2kl} (\log 2P).
\]

Choosing $l = k^2+k+1 > k(k+1)$ yields
\begin{equation}\label{H_bound}
	H(\bm\alpha, P/m) \ll \frac{P}{m}P^{Bk/(k^2+k+1)}(P/m)^{\frac{-\delta}{2k(k^2+k+1)}} (\log 2P)
\end{equation}
Substituting this bound into (\ref{type_I_sum}), we obtain
\[
	S_3 \ll \sum_{m \leq X^2} \frac{P}{m}P^{Bk/l}(P/m)^{\frac{-\delta}{2k(k^2+k+1)}} (\log 2P)
\]
\[
	\ll P^{1 + \frac{Bk}{k^2+k+1} + 2B - \frac{\delta}{2k(k^2+k+1)}} (\log P).
\]

\end{proof}

\begin{lemma}\label{S2_lemma}
\[
	S_2 \ll P^{1 + \frac{Bk}{k^2+k+1} - \frac{\delta}{2k(k^2+k+1)}} (\log P)^3.
\]
\end{lemma}
\begin{proof}
\[
	S_2 = \sum_{n \leq P} \left( \sum_{\substack{ml=n \\ k \leq X}} \mu(m) \log l \right) h(n)
\]
\[
	= \sum_{m \leq X} \mu(m) \sum_{l < P/m} h(ml) \int_1^l \frac{dx}{x}
\]
\[
	= \sum_{m \leq X} \mu(m) \int_{P/X}^{P} \sum_{l < P/m} h(ml) \frac{dx}{x}
\]
\begin{equation}\label{S2_breakdown}
	= \int_{P/X}^{P} \left(\sum_{m \leq P/x} \mu(m) \sum_{l < P/m} h(ml) \right) \frac{dx}{x}.
\end{equation}
Now let $\bm\alpha' = (\alpha_1m, \ldots, \alpha_km^k)$ and by (\ref{H_bound}),
\[
	\sum_{l < P/m} h(ml) = H(\bm\alpha', P/m) \ll \frac{P}{m}P^{\frac{Bk}{k^2+k+1}}(P/m)^{\frac{-\delta}{2k(k^2+k+1)}} (\log 2P).
\]

Substituting this into (\ref{S2_breakdown}) yields
\[
	S_2 \ll \int_{P/X}^{P} \sum_{m \leq P/x} \mu(m) P^{\frac{Bk}{k^2+k+1}}\left(\frac{P}{m}\right)^{1-\frac{\delta}{2k(k^2+k+1)}} (\log 2P) \frac{dx}{x}
\]
\[
	\ll P^{\frac{Bk}{k^2+k+1}}P^{1-\frac{\delta}{2k(k^2+k+1)}} (\log 2P) (\log X) (\log P/m)
\]
\[
	\ll P^{1 + \frac{Bk}{k^2+k+1} - \frac{\delta}{2k(k^2+k+1)}} (\log P)^3.
\]
\end{proof}


\begin{lemma}\label{S4_lemma}
\[
	S_4 \ll P^{1-\frac{\min(\delta,B)}{4b^2}} (\log P).
\]
\end{lemma}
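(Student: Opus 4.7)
\textbf{Proof Plan for Lemma~\ref{S4_lemma}.} The plan is to treat $S_4$ as a Type II bilinear sum and extract a power saving via Cauchy--Schwarz together with Weyl-type bounds on the resulting exponential sums, in a manner parallel to the Type I treatment of $S_3$. First split the outer sums dyadically into pieces $S_4^{(M,L)}$ with $M < m \le 2M$ and $L < l \le 2L$; since $a(m)$ and $b(l)$ are supported on $(X,P]$ and $ml \le P$, there are $O((\log P)^2)$ such boxes with $X < M, L$ and $ML \asymp P$.

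On each dyadic box, apply Cauchy--Schwarz in the variable $m$, using the divisor bound $|a(m)| \ll \log P$ to control $\sum_m |a(m)|^2 \ll M(\log P)^{O(1)}$. Expanding the remaining square over $m$ gives
\[
\sum_{M < m \le 2M} \bigg| \sum_{L < l \le 2L} b(l)\, h(ml) \bigg|^2 = \sum_{L < l_1, l_2 \le 2L} b(l_1)\overline{b(l_2)}\, W(l_1, l_2),
\]
where $W(l_1, l_2) = \sum_{M < m \le 2M} e\bigl(\sum_{j=1}^k \alpha_j (l_1^j - l_2^j)\, m^j\bigr)$. The diagonal contribution $l_1 = l_2$ is $O(ML)$, which is acceptable after reassembling.

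For $l_1 \ne l_2$, bound $W(l_1,l_2)$ by applying Dirichlet's theorem to each coefficient $\alpha_j(l_1^j - l_2^j)$ as in the derivation of (\ref{H_bound}) and invoking Theorem 5.2 of~\cite{vaughanHLM} together with Lemma~\ref{vmvt}. The minor arc hypothesis $\bm\alpha \in \mathfrak{m}(Q)$ forces at least one approximant denominator to be large (otherwise the aggregated approximation would place $\bm\alpha$ in $\mathfrak{M}(Q)$, as in the argument proving Lemma~\ref{S3_lemma}), yielding a power saving over the trivial bound $M$. Sum over $l_1, l_2$, reinsert into the Cauchy--Schwarz inequality, take square roots, and combine the $O((\log P)^2)$ dyadic pieces.

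The main obstacle is uniform book-keeping across the full dyadic range $X < M < P/X$. The Weyl saving from the minor arc approximation is strongest when $M$ is balanced against $L = P/M$, and weakest at the extremes $M \asymp X$ or $M \asymp P/X$, where the Cauchy--Schwarz step introduces the largest loss. Precisely here the lower bound $M, L > X = (\log P)^B$ becomes essential, producing a residual saving of size controlled by $B$, which must be balanced against the saving of size controlled by $\delta$ coming from the Weyl bound in the balanced regime. This balancing argument is what produces the $\min(\delta, B)$ dependence in the final exponent, with an extra $\log P$ factor absorbing the dyadic decomposition and divisor losses.
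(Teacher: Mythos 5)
Your plan takes a genuinely different route from the paper (a single Cauchy--Schwarz Type II treatment rather than the paper's double high-moment H\"older argument), but it has a genuine gap at the off-diagonal step. After Cauchy--Schwarz in $m$ you must bound $\sum_{l_1\neq l_2}|W(l_1,l_2)|$ with $W(l_1,l_2)=\sum_{m\sim M}e\bigl(\sum_{j}\alpha_j(l_1^j-l_2^j)m^j\bigr)$, and you propose to extract a uniform power saving by arguing, as in the proof of Lemma~\ref{S3_lemma}, that small denominators in the Dirichlet approximations to the coefficients $\alpha_j(l_1^j-l_2^j)$ would force $\bm\alpha\in\mathfrak{M}(Q)$. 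That transfer works for $S_3$ only because the multiplier there is $m^j$ with $m\le X^2$ very small, so a denominator $q_j$ for $\alpha_j m^j$ yields a denominator $\le m^jq_j$ for $\alpha_j$, still below the major-arc cutoff. In the Type II situation the multipliers $l_1^j-l_2^j$ can be of size up to about $L^j$ (hence up to a power of $P$), so a small denominator for $\alpha_j(l_1^j-l_2^j)$ gives $\alpha_j$ only a denominator of size up to $q_j|l_1^j-l_2^j|$, far beyond $Q$, and no contradiction with $\bm\alpha\in\mathfrak{m}(Q)$ follows. Indeed no uniform bound $W(l_1,l_2)\ll M^{1-\rho}$ is available: for certain pairs all the dilated coefficients lie near integers and $W(l_1,l_2)\asymp M$. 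What is actually needed is a count of how many pairs are bad, i.e.\ a reciprocal-sum estimate of the shape $\sum_{\mathbf v}\prod_j\min\bigl(\,\cdot\,,\|\alpha_jv_j\|^{-1}\bigr)$; but after only one Cauchy--Schwarz the inner sum is still of degree $k$ in $m$, so the linear $\min$-product machinery (Lemma 2.2 of~\cite{vaughanHLM}) does not apply, and your sketch never closes this counting problem. (You also leave the hyperbolic constraint $ml\le P$ untreated: your dyadic boxes with $ML\asymp P$ are truncated boxes, which must be decoupled before taking moments.)

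This is precisely why the paper proceeds differently: it first removes the constraint $l\le P/m$ with a truncated Fourier (Perron-type) integral, then applies H\"older with a large exponent $2b$ in the $l$-variable and again in the resulting $\mathbf v$-variable, using the Vinogradov Mean Value Theorem (Lemma~\ref{vmvt}) twice to bound the representation counts; this linearizes the phase to $e(\bm\alpha\mathbf u\mathbf v)$, after which the sum $\prod_j\min\bigl(P^j/M^j,\|\alpha_ju_j\|^{-1}\bigr)$ is handled by the standard lemma, and the minor-arc condition ($q_j>P^{\delta/t}$ for some $j$) together with $m>X$ produces the saving $\min(\delta,B)/(4b^2)$ --- the $4b^2$ arising exactly from the two H\"older steps. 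To repair your argument you would either have to raise the inner sum to a high power as well (recreating the paper's structure) or supply a large-values/counting argument for the pairs $(l_1,l_2)$ on which $W(l_1,l_2)$ is big; as written, the key step fails.
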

\begin{proof}
We begin by splitting $S_4$ into dyadic ranges.  Let $\mathcal{M} = \{X2^m : 0 \leq m, 2^m \leq P/X^2\}$.  Then
\begin{equation}\label{S4MDef}
	S_4 = \sum_{M \in \mathcal{M}} S_4(M),
\end{equation}
where 
\[
	S_4(M) = \sum_{M < m \leq 2M} \sum_{l \leq P/k} a(m)b(l)h(ml).
\]

Our goal is now to replace the sum over the range $l \leq P/m$ with one over the range $l \leq P/M$.  We begin by considering the integral
\[
	I(x) = \int_\mathbb{R} \frac{\sin(2\pi Ry)}{\pi y}e(-xy)dy,
\]
where $R>0$ is a constant.  Computing the integral via the residue theorem gives
\[
	I(x) = \begin{cases} 1, & |x|<R \\ 0, & |x|>R.\end{cases}
\]
Now for $x \neq R$, $y \geq 1$,
\begin{equation}\label{Tintegral}
	\int_{|y|>T} \frac{\sin(2\pi Ry)}{\pi y}e(-xy)dy = \int_{|y|>T} \frac{e\big((R-x)y\big)-e\big(-(R+x)y\big)}{2\pi iy} dy.
\end{equation}
Integrating the right hand side of (\ref{Tintegral}) by parts gives
\[
	\int_{|y|>T} \frac{\sin(2\pi Ry)}{\pi y}e(-xy)dy \ll \frac{1}{T|R-x|} + \frac{1}{T|R+x|} + \frac{1}{T^3} \ll \frac{1}{T\big|R-|x|\big|}.
\]
Thus we can rewrite $I(x)$ as an integral over $[-T,T]$ with an acceptable error term:
\[
	I(x) = \int_{-T}^T \frac{\sin(2\pi Ry)}{\pi y}e(-xy)dy + O\left(\frac{1}{T\big|R-|x|\big|}\right).
\]

We now take $R = \log(\lfloor P\rfloor + \frac{1}{2})$, $x = \log(ml)$, giving us
\[
	S_4(M) = \sum_{M < m \leq 2M} \sum_{l \leq P/M} a(m)b(l)h(ml)I(\log(ml))
\]
\[
	 = \int_{-T}^T \sum_{M < m \leq 2M} \sum_{l \leq P/M} \frac{a(m)b(l)}{(ml)^{2\pi iy}} h(kl) \frac{\sin(2\pi Ry)}{\pi y} dy + O\left(\frac{P^2\log P}{T}\right).
\]
Now 
\[
	\frac{\sin(2\pi Ry)}{\pi y} \ll \frac{1}{\pi y} \ll \frac{1}{|y|}
\]
and
\[
	\frac{\sin(2\pi Ry)}{\pi y} \ll \frac{2\pi Ry}{\pi y} \ll R,
\]
so
\[
	\frac{\sin(2\pi Ry)}{\pi y} \ll \min(R, 1/|y|).
\]
Take $T = P^3$, $a(m,y) = a(m)m^{-2\pi iy}$, $b(l,y) = b(l)l^{-2\pi iy}$, and let
\begin{equation}\label{S4MtDef}
	S_4(M, y) = \sum_{M < m \leq 2M} \sum_{l \leq P/m} a(m, y)b(l, y)h(ml).
\end{equation}
Then
\[
	S_4(M) \ll \sup_{|y|<T}|S_4(M,y)| \int_{-T}^T \frac{\sin(2\pi Ry)}{\pi y} dy
\]
\[
	\ll 1 + (\log P)\sup_{|y|<T}|S_4(M,y)|.
\]

We now consider $S_4(M,y)$.  Let $K = k(k+1)/2$ here and let $b>2K$.  By H\"older's inequality
\begin{multline}\label{procedureStart}
	S_4(M,y)^{2b} \ll \left(\sum_{M < m \leq 2M} |a(m,y)|^\frac{2b}{2b-1} \right)^{2b-1} \\ \times \sum_{M < m \leq 2M} \left| \sum_{l \leq P/M} b(l,y)h(ml) \right|^{2b}.
\end{multline}
Now $|a(k,y)| = |a(k)| \leq \log m \ll \log M \ll \log P$, so
\[
	S_4(M,y)^{2b} \ll \left( M(\log P)^\frac{2b}{2b-1}\right)^{2b-1} \sum_{M < m \leq 2M} \left| \sum_{l \leq P/M} b(l,y)h(ml) \right|^{2b}
\]
\begin{equation}\label{2b}
	\ll (\log P)^{2b} M^{2b-1} \sum_{M < m \leq 2M} \left| \sum_{l \leq P/M} b(l,y)h(ml) \right|^{2b}.
\end{equation}
Expanding the $2b$-th power in (\ref{2b}) yields
\[
	\left| \sum_{l \leq P/M} b(l,y)h(ml) \right|^{2b}
\]
\begin{equation}\label{introsl}
	= \sum_{\substack{\mathbf l \\ l_j \leq P/M}} \left( \prod_{i=1}^b b(l_i,y) \prod_{i=b+1}^{2b} \overline{b(l_i,y)} \right) e\left(\bm\alpha m \mathbf{s}(\mathbf l)\right)
\end{equation}
where $\mathbf{s}(\mathbf{l})$ is defined by
\[
	s_j(\mathbf l) = l_1^j + \ldots + l_b^j - l_{b+1}^j - \ldots - l_{2b}^j.
\]
Collecting terms in (\ref{introsl}) by values of $s_j$ yields
\begin{equation}\label{bfSumBound}
	\left| \sum_{l \leq P/M} b(l,y)h(ml) \right|^{2b} = \sum_{\substack{\mathbf{v} \\ |v_j| \leq bP^j}} R_1(\mathbf{v}) e(\bm\alpha m \mathbf{v})
\end{equation}
where
\[
	R_1(\mathbf{v}) = \sum_{\substack{\mathbf{l} \\ l_j \leq P/M \\ \mathbf{s}(\mathbf{l}) = \mathbf{v}}} \prod_{i=1}^b b(l_i,y) \prod_{i=b+1}^{2b} \overline{b(l_i,y)} \ll J_{b,3}(P/M) \ll (P/M)^{2b-K}
\]
by Lemma~\ref{vmvt}.
Substituting (\ref{bfSumBound}) into (\ref{2b}) yields
\[
	S_4(M,y)^{2b} \ll (\log P)^{2b}M^{2b-1} \sum_{\substack{\mathbf{v} \\ |v_j| \leq bP^jM^{-j}}} R_1(\mathbf{v}) \sum_{M < k \leq 2M} e(\bm\alpha m \mathbf{v})
\]
\begin{equation}\label{procedureEnd}
	\ll (\log P)^{2b}M^{K-1} P^{2b-K} \sum_{\substack{\mathbf{v} \\ |v_j| \leq bP^jM^{-j}}} \sum_{M < m \leq 2M} e(\alpha_1mv_1 + \ldots + \alpha_km^kv_k).
\end{equation}
We now repeat the procedure followed from (\ref{procedureStart}) to (\ref{procedureEnd}).  By H\"older's inequality
\begin{multline}
	S_4(M,y)|^{4b^2} \ll \left( (\log P)^{2b} M^{K-1} P^{2b-K} \right)^{2b} \left( \sum_{\substack{\mathbf{v} \\ |v_j| \leq bP^jM^{-j}}} 1^\frac{2b}{2b-1} \right)^{2b-1} \\ \times \sum_{\substack{\mathbf{v} \\ |v_j| \leq bP^jM^{-j}}} \left| \sum_{M < m \leq 2M} e(\alpha_1mv_1 + \ldots + \alpha_km^kv_k) \right|^{2b}
\end{multline}
\begin{multline}\nonumber
	\ll (\log P)^{4b}M^{2b(K-1)}P^{4b^2-2bK} \left( b^3P^{K}M^{-K} \right)^{2b-1} \\ \times \sum_{\substack{\mathbf{v} \\ |v_j| \leq bP^jM^{-j}}} \left| \sum_{M < m \leq 2M} e(\alpha_1mv_1 + \ldots + \alpha_km^kv_k) \right|^{2b}
\end{multline}
\begin{multline}\label{orNot2b}
	\ll (\log P)^{4b^2} M^{K-2b} P^{4b^2-K} \\ \times \sum_{\substack{\mathbf{v} \\ |v_j| \leq bP^jM^{-j}}}  \left| \sum_{M < m \leq 2M} e(\alpha_1mv_1 + \ldots + \alpha_km^kv_k) \right|^{2b}.
\end{multline}
We expand the $2b$-th power in (\ref{orNot2b}) and collect like terms. This yields
\[
	\left| \sum_{M < m \leq 2M} e(\alpha_1mv_1 + \ldots + \alpha_km^kv_k) \right|^{2b}
\]
\[
	= \sum_{\substack{\mathbf{m} \\ M < m_j \leq 2M}} e(\alpha_1s_1(\mathbf{m})v_1 + \ldots + \alpha_ks_k(\mathbf{m})v_k)
\]
\begin{equation}\label{eSumBound}
	= \sum_{\substack{\mathbf{u} \\ |u_j| \leq b2^jM^j}} R_2(\mathbf{u}) e(\bm\alpha \mathbf{u} \mathbf{v})
\end{equation}
where
\[
	R_2(\mathbf{u}) = \sum_{\substack{\mathbf{m} \\ M < m_j \leq 2M \\ \mathbf{s}(\mathbf{k}) = \mathbf{u}}} 1 \ll J_{b,k}(2M) \ll M^{2b-K}
\]
by Lemma~\ref{vmvt}.
Substituting (\ref{eSumBound}) into (\ref{orNot2b}), we obtain
\[
	S_4(M,y)^{4b^2} \ll (\log P)^{4b^2} P^{4b^2-K} \sum_{\substack{\mathbf{u} \\ |u_j| \leq b2^jM^j}} \left| \sum_{\substack{\mathbf{v} \\ |v_j| \leq bP^jM^{-j}}} e(\bm\alpha \mathbf{u} \mathbf{v}) \right|.
\]
Summing over each of the $v_j$ gives
\[
	S_4(M,y)^{4b^2} \ll (\log P)^{4b^2} P^{4b^2-K} \sum_{\substack{\mathbf{u} \\ |u_j| \leq b2^jM^j}} \prod_{j=1}^k \min\left(\frac{P^j}{M^j},\frac{1}{\|\alpha_ju_j\|}\right).
\]
Applying Lemma 2.2 of~\cite{vaughanHLM} yields
\begin{equation}\label{S4MtBound}
	S_4(M,y)^{4b^2} \ll (\log P)^{4b^2+k} P^{4b^2} \prod_{j=1}^k \left(\frac{1}{q_j} + \frac{1}{M^j} + \frac{M^j}{P^j} + \frac{q_j}{P^j}\right).
\end{equation}
Combining (\ref{S4MtBound}) with (\ref{S4MDef}) and (\ref{S4MtDef}), we obtain
\[
	S_4 \ll P(\log P) \prod_{j=1}^k \left(\frac{1}{q_j} + \frac{1}{X^j} + \frac{q_j}{P^j}\right)^{1/(4b^2)}.
\]
Recalling that $q_j > P^{\delta/t}$ for some $j$ and $X = P^B$, this is
\begin{equation}\label{S4bound}
	S_4 \ll P^{1-\min(\delta,B)/(4b^2)} (\log P)
\end{equation}
for $b > k(k+1)$.
\end{proof}


\noindent\textit{Proof of Theorem~\ref{F_bound}}: Using the Vaughan's identity breakdown of (\ref{vaughans_identity_breakdown}) and the estimates for the $S_i$ found in Lemmas~\ref{S1_lemma},~\ref{S3_lemma},~\ref{S2_lemma}, and~\ref{S4_lemma}, we have
\[
	F_3(\bm\alpha) = S_1 + S_2 + S_3 + S_4
\]
\begin{align*}
	\ll & P^B + P^{1 + \frac{Bk}{k^2+k+1} + 2B - \frac{\delta}{2k(k^2+k+1)}} (\log P) \\ + & P^{1 + \frac{Bt}{k^2+k+1} - \frac{\delta}{2k(k^2+k+1)}} (\log P)^3 + P^{1-\min(\delta,B)/(4b^2)} (\log P)
\end{align*}
for any $b > k(k+1)$.
We choose $1-\omega > B > 4b^2\omega$ and $\delta > 2k(k^2+k+1)\left(\frac{Bk}{k^2+k+1} + 2B + \omega\right)$ for a given $\omega > 0$ (noting that choosing $\omega, B$ sufficiently small allows us to have $\delta$ as small as desired), yielding
\[
	F_k(\bm\alpha) \ll P^{1-\omega}
\]
uniformly over $\bm\alpha \in \mathfrak{m}(Q)$.
\qed

We can now conclude the necessary bound on the minor arcs.

\begin{theorem}\label{minor_arc_bound} For $s \geq 2s_\varepsilon(\mathbf{k}) + 1$ and for some constant $\varepsilon > 0$,
\[
	\int_{\mathfrak{m}(Q)} |f(\bm\alpha)|^s d\bm\alpha \ll P^{s-K-\varepsilon}.
\]
\end{theorem}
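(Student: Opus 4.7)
The plan is a standard Hua-type interpolation between the pointwise Weyl bound on the minor arcs and the mean value estimate over the full torus. Writing $s = 2s_\varepsilon(\mathbf{k}) + r$ with $r \geq 1$ (permitted by hypothesis), I split
\[
|f(\bm\alpha)|^{s} = |f(\bm\alpha)|^{r} \cdot |f(\bm\alpha)|^{2s_\varepsilon(\mathbf{k})}.
\]
The first factor will be treated pointwise on $\mathfrak{m}(Q)$, while the second will be absorbed by the Hua-type mean value bound after extending the domain of integration back to $\mathcal{A}$.

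First I would apply Corollary~\ref{weyl_bound}, which furnishes a fixed $\omega > 0$ with $|f(\bm\alpha)| \ll P^{1-\omega}$ uniformly for $\bm\alpha \in \mathfrak{m}(Q)$. Pulling the supremum of $|f|^{r}$ outside the integral and enlarging the remaining integral to all of $\mathcal{A}$ (permitted because the integrand is nonnegative) gives
\[
\int_{\mathfrak{m}(Q)} |f(\bm\alpha)|^{s}\,d\bm\alpha \ll P^{r(1-\omega)} \int_{\mathcal{A}} |f(\bm\alpha)|^{2s_\varepsilon(\mathbf{k})}\,d\bm\alpha.
\]
Next I would invoke Lemma~\ref{prime_mvt} at the minimal admissible exponent to bound the remaining integral by $\ll P^{2s_\varepsilon(\mathbf{k}) - K + \varepsilon}$, which is the genuine Hua-type input traceable back to orthogonality and the definition of $s_\varepsilon(\mathbf{k})$. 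Multiplying these two estimates together yields
\[
\int_{\mathfrak{m}(Q)} |f(\bm\alpha)|^{s}\,d\bm\alpha \ll P^{r(1-\omega) + 2s_\varepsilon(\mathbf{k}) - K + \varepsilon} = P^{s - K - r\omega + \varepsilon}.
\]

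Since $\omega > 0$ is the fixed saving produced by Corollary~\ref{weyl_bound} and $r \geq 1$, choosing the $\varepsilon$ implicit in Lemma~\ref{prime_mvt} smaller than $\omega/2$ (say) leaves a strictly positive net saving $r\omega - \varepsilon > 0$, delivering the claimed bound $P^{s-K-\varepsilon'}$ for a suitable $\varepsilon' > 0$. There is no genuine obstacle in this final step: all the real difficulty has already been concentrated in the pointwise minor arc bound of Section~\ref{sec:minor_arcs} and in the Hua-type mean value input, and the present argument is simply the routine two-line combination of those ingredients, with the two $2s_\varepsilon(\mathbf{k})$ contributions in the exponent cancelling precisely to leave the target $s - K$ plus a Weyl-type saving.
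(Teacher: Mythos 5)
Your proposal is correct and is essentially the paper's own argument: the paper likewise combines the pointwise minor-arc bound of Corollary~\ref{weyl_bound} with the Hua-type mean value of Lemma~\ref{prime_mvt} (taking the saving $\varepsilon < \omega/2$), exactly as in your factorization $|f|^{s} = |f|^{r}\,|f|^{2s_\varepsilon(\mathbf{k})}$. Your write-up just makes explicit the split and bookkeeping that the paper leaves as a one-line remark.
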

\begin{proof}
This follows directly from Lemma~\ref{prime_mvt} and Corollary~\ref{weyl_bound}.
\end{proof}

\section{The Major Arc Breakdown}\label{sec:major_arcs}

To treat the major arcs, it is convenient to treat the central and outer portions of the major arcs separately.  Let $R = (\log P)^A$ for some positive constant $A$ and define the small major arcs 

\begin{equation}\label{inner_major_arc_def}
	\mathfrak{B}(\mathbf a, q) = \{\bm\alpha: q(1 + |\bm\theta|P^{\mathbf k})\leq R\},
\end{equation}
and the outer major arcs
\begin{equation}\label{outer_major_arc_def}
	\mathfrak N(\mathbf a, q) = \{\bm\alpha: R\leq q(1 + |\bm\theta|P^{\mathbf k})\leq Q\}.
\end{equation}
Let $\mathfrak{B}(Q)$ be the union of all such $\mathfrak{B}(\mathbf a, q)$, and let $\mathfrak{N}(Q)$ be the union of all such $\mathfrak{N}(\mathbf a, q)$.

Let
\begin{equation}\label{fi*_def}
	f_i^*(\bm\alpha) = \frac{1}{\phi(q)} W_i(q, \mathbf{a}) \int_{0}^P e(\bm\theta \mathbf{u}_i x^\mathbf{k}) dx
\end{equation}
be our desired approximation for $f_i(\bm\alpha)$ on the major arcs.  We first establish the approximation on the central major arcs $\mathfrak{B}(Q)$.

\begin{lemma}\label{small_major_arc_approximation} Let $\bm\alpha = \frac{\mathbf{a}}{q} + \bm\theta \in \mathfrak{B}(Q)$.  Then for some positive constant $C$ depending only on $A$,
\[
	|f_i(\bm\alpha) - f_i^*(\bm\alpha)| \ll P \exp(-C(\log P)^{1/2}).
\]
\end{lemma}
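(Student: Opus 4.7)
The plan is to apply the Siegel--Walfisz prime number theorem in arithmetic progressions separately on each residue class $p\equiv r\pmod q$, then use partial summation to convert the smooth weight $e(\bm\theta\mathbf u_i p^{\mathbf k})$ into the integral appearing in $f_i^*$. The central major arc condition $q(1+|\bm\theta|P^{\mathbf k})\le R=(\log P)^A$ is what keeps every intermediate quantity polylogarithmic, so all losses get absorbed into the exponential error term.

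First I would split the sum defining $f_i$ by residues modulo $q$: writing $\bm\alpha=\mathbf a/q+\bm\theta$,
\[
f_i(\bm\alpha)=\sum_{r=1}^{q}e\!\left(\frac{\mathbf a\mathbf u_i r^{\mathbf k}}{q}\right)\sum_{\substack{p\le P\\ p\equiv r\!\pmod q}}(\log p)\,e(\bm\theta\mathbf u_i p^{\mathbf k}).
\]
Residues with $(r,q)>1$ can only be hit by the single prime $p=r\le q\le R$, contributing at most $q\log q\ll(\log P)^{A+1}$ in total. For residues coprime to $q$, Siegel--Walfisz provides
\[
\psi(y;q,r)=\sum_{\substack{p\le y\\ p\equiv r\!\pmod q}}\log p=\frac{y}{\phi(q)}+O\!\bigl(y\exp(-c\sqrt{\log y})\bigr)
\]
uniformly for $q\le(\log y)^{2A}$, which covers every $q\le R$ encountered on $\mathfrak B(Q)$.

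Next I would apply partial summation on the inner sum against the smooth weight $g(y)=e(\bm\theta\mathbf u_i y^{\mathbf k})$, whose derivative satisfies $|g'(y)|\ll\sum_j|\theta_j u_{ij}|y^{k_j-1}$. The main term produces $\frac{1}{\phi(q)}\int_0^P g(y)\,dy$, and the contribution of the Siegel--Walfisz error is
\[
\ll\exp(-c\sqrt{\log P})\Bigl(P+\sum_j|\theta_j u_{ij}|P^{k_j+1}\Bigr)\ll P\bigl(1+|\bm\theta|P^{\mathbf k}\bigr)\exp(-c'\sqrt{\log P}).
\]
Summing the main term over $(r,q)=1$ yields exactly $\frac{1}{\phi(q)}W_i(q,\mathbf a)\int_0^P g(y)\,dy=f_i^*(\bm\alpha)$, while summing the error over $r$ costs at most a factor of $q$, so the total error is bounded by
\[
qP(1+|\bm\theta|P^{\mathbf k})\exp(-c'\sqrt{\log P})\le R\cdot P\exp(-c'\sqrt{\log P})
\]
using the defining inequality of $\mathfrak B(Q)$. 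Absorbing the polylogarithmic factor $R=(\log P)^A$ into a slightly smaller constant in the exponent produces the desired bound $P\exp(-C\sqrt{\log P})$.

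The main obstacle is bookkeeping in the partial summation: the oscillating factor has derivative of order $|\bm\theta|y^{\mathbf k-1}$, which would be unmanageable on the wider outer arcs $\mathfrak N(Q)$ but is precisely tamed here because the defining condition of $\mathfrak B(Q)$ forces both $q$ and $1+|\bm\theta|P^{\mathbf k}$ to stay polylogarithmic. That $qR$ is polylogarithmic is exactly what permits all constants and residue-class sums to be swallowed by the Siegel--Walfisz saving $\exp(-c\sqrt{\log P})$; correspondingly, the argument breaks in exactly the way it should outside $\mathfrak B(Q)$, which justifies splitting the major arcs into the two regions treated separately in the next section.
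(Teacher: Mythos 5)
Your proposal is correct and follows essentially the same route as the paper: decompose $f_i$ into residue classes modulo $q$, apply Siegel--Walfisz to each coprime class, and use the central-arc condition $q(1+|\bm\theta|P^{\mathbf k})\le R=(\log P)^A$ to absorb the polylogarithmic losses into the exponential saving. If anything, you are more explicit than the paper about the partial-summation step that handles the twist $e(\bm\theta\mathbf u_i p^{\mathbf k})$, which the paper's write-up leaves implicit.
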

\begin{proof}
\[
	|f_i(\bm\alpha) - f_i^*(\bm\alpha)|
\]
\[
	= \left|\sum_{p \leq P} (\log P) e\left(\frac{\mathbf{a} \mathbf{u}_i r^\mathbf{k}}{q}\right)e(\bm\theta \mathbf{u}_i p^\mathbf{k}) - \frac{1}{\phi(q)} W_i(q, \mathbf{a}) \int_0^P e(\bm\theta \mathbf{u}_i x^\mathbf{k}) dx\right|
\]
\[
	= \left| \sum_{\substack{r=1 \\ (r,q)=1}}^q e\left(\frac{\mathbf{a} \mathbf{u}_i r^\mathbf{k}}{q}\right) \left( \sum_{\substack{p \leq P \\ p \equiv r \pmod* q}} (\log p) e(\bm\theta \mathbf{u}_i p^\mathbf{k}) - \frac{1}{\phi(q)} \int_0^P e(\bm\theta \mathbf{u}_i x^\mathbf{k}) dx \right) \right|
\]
\[
	= \left| \sum_{\substack{r=1 \\ (r,q)=1}}^q e\left(\frac{\mathbf{a} \mathbf{u}_i r^\mathbf{k}}{q}\right)  \sum_{m \leq P} \left((\log m)\mathbb{1}_{\mathcal{P}_{r}} - \frac{1}{\phi(q)} \right) e(\bm\theta \mathbf{u}_i m^\mathbf{k}) \right| + O(|\bm\theta|P^\mathbf{k})
\]
where $\mathbb{1}_{\mathcal{P}_{r}}$ is the indicator function of the primes $\equiv r \pmod* q$, i.e. $\mathcal{P}_r = \{p : p \equiv r\pmod q\}$.

Now by the Siegel-Walfisz theorem,
\[
	\sum_{m \leq P} (\log m)\mathbb{1}_{\mathcal{P}_{r}} - \frac{1}{\phi(q)} \ll P\exp(-C(\log P)^{1/2})
\]
for some constant $C$ depending only on $A$.  Thus.
\[
	|f_i(\bm\alpha) - f_i^*(\bm\alpha)|
\]
\[
	\ll \phi(q)P\exp(-C(\log P)^{1/2}) + O(|\bm\theta|P^\mathbf{k})
\]
\[
	\ll P\exp(-C(\log P)^{1/2}).
\]
\end{proof}

\begin{lemma}\label{major_arc_approx} For some positive constant $C$,
\[
	\int_{\mathfrak{B}(Q)} \left| \prod_{i=1}^s f_i(\bm\alpha) - \prod_{i=1}^s f_i^*(\bm\alpha) \right| d\bm\alpha \ll P^{s-K}\exp(-C(\log P)^{1/2}).
\]
\end{lemma}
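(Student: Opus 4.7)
The plan is a standard telescoping argument combined with the pointwise approximation Lemma \ref{small_major_arc_approximation}. First, I would write the difference of products as
\[
\prod_{i=1}^s f_i(\bm\alpha) - \prod_{i=1}^s f_i^*(\bm\alpha) = \sum_{j=1}^s \Big(\prod_{i<j} f_i^*(\bm\alpha)\Big)\bigl(f_j(\bm\alpha) - f_j^*(\bm\alpha)\bigr)\Big(\prod_{i>j} f_i(\bm\alpha)\Big)
\]
and insert absolute values, so each of the $s$ resulting terms will be handled uniformly on $\mathfrak{B}(Q)$: the middle factor via Lemma \ref{small_major_arc_approximation}, the remaining factors via trivial pointwise bounds.

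Next, I would establish the trivial bounds $|f_i(\bm\alpha)| \ll P$ (from the Chebyshev estimate $\sum_{p\leq P}\log p \ll P$) and $|f_i^*(\bm\alpha)| \ll P$ (from $|W_i(q,\mathbf{a})| \leq \phi(q)$ interpreted as a sum over reduced residues, together with $|\int_0^P e(\bm\theta \mathbf u_i x^{\mathbf k})\,dx| \leq P$). Combining these with Lemma \ref{small_major_arc_approximation} yields the uniform pointwise estimate
\[
\Big| \prod_{i=1}^s f_i(\bm\alpha) - \prod_{i=1}^s f_i^*(\bm\alpha) \Big| \ll s\, P^{s-1}\cdot P\exp\bigl(-C(\log P)^{1/2}\bigr) \ll P^s \exp\bigl(-C(\log P)^{1/2}\bigr)
\]
valid throughout $\mathfrak{B}(Q)$.

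Finally, I would bound the measure of $\mathfrak{B}(Q)$. Interpreting the defining condition $q(1+|\bm\theta|P^{\mathbf k})\leq R$ componentwise, a single arc $\mathfrak{B}(\mathbf a,q)$ has measure $\prod_{j=1}^t 2R/(qP^{k_j}) = (2R)^t/(q^tP^K)$. Summing over the at most $q^t$ admissible tuples $\mathbf a$ and over $q\leq R$ gives $\mathrm{meas}(\mathfrak{B}(Q)) \ll R^{t+1}/P^K$. Multiplying the pointwise bound by the measure produces
\[
\int_{\mathfrak{B}(Q)} \Big| \prod_i f_i - \prod_i f_i^* \Big|\,d\bm\alpha \ll P^s \exp\bigl(-C(\log P)^{1/2}\bigr)\cdot \frac{R^{t+1}}{P^K} \ll P^{s-K}\exp\bigl(-C'(\log P)^{1/2}\bigr),
\]
where the polylogarithmic factor $R^{t+1}=(\log P)^{A(t+1)}$ is absorbed into the exponential tail by shrinking the constant in the exponent slightly.

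There is no real obstacle here. The Siegel–Walfisz savings of size $\exp(-C(\log P)^{1/2})$ inherited from Lemma \ref{small_major_arc_approximation} easily dominate both the constant $s$ arising in the telescoping and the polylogarithmic growth of $R^{t+1}$; all the substantive analytic work has already been performed in the previous lemma, and this lemma is a routine packaging of it.
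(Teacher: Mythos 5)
Your proposal is correct and follows essentially the same route as the paper: telescope the difference of products, apply Lemma~\ref{small_major_arc_approximation} together with the trivial bounds $|f_i|, |f_i^*| \ll P$ to get a uniform pointwise estimate $\ll P^s\exp(-C(\log P)^{1/2})$, then multiply by the measure of $\mathfrak{B}(Q)$. You have been more careful than the paper in two respects. First, you make the telescoping decomposition explicit, whereas the paper jumps straight to the pointwise bound $P^s\exp(-C(\log P)^{1/2})$ without showing how it follows from the lemma for a single $f_i - f_i^*$. Second, and more substantively, you correctly use $R = (\log P)^A$ to bound the measure of $\mathfrak{B}(Q)$, obtaining a harmless polylogarithmic factor $R^{t+1}$ that the exponential savings absorb. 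The paper's displayed computation instead integrates $\theta_j$ over $[-Q/P^{k_j}, Q/P^{k_j}]$ and sums $q < Q$, producing $Q^{t+1} = P^{\delta(t+1)}$, which is a genuine power of $P$ and cannot be absorbed by $\exp(-C(\log P)^{1/2})$; this is evidently a slip (the set $\mathfrak{B}(\mathbf a,q)$ is defined by $q(1+|\bm\theta|P^{\mathbf k}) \le R$, not $\le Q$), and your version is the correct reading. No gap in your argument.
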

\begin{proof}
\[
	\int_{\mathfrak{B}(Q)} \left| \prod_{i=1}^s f_i(\bm\alpha) - \prod_{i=1}^s f_i^*(\bm\alpha) \right| d\bm\alpha
\]
\[
	\sum_{q < Q} \mathop{\sum_{a_1=1}^q \ldots \sum_{a_t=1}^q}_{(a_1, \ldots, a_t, q)=1} \int_{\mathfrak{B}(q, \mathbf{a}, Q)} \left| \prod_{i=1}^s f_i(\bm\alpha) - \prod_{i=1}^s f_i^*(\bm\alpha) \right| d\bm\alpha
\]
\[
	\ll \sum_{q < Q} \mathop{\sum_{a_1=1}^q \ldots \sum_{a_t=1}^q}_{(a_1, \ldots, a_t, q)=1} \int_{-Q/P^{k_1}}^{Q/P^{k_1}} \ldots \int_{-Q/P^{k_t}}^{Q/P^{k_t}} P^s \exp(-C(\log P)^{1/2}) d\bm\theta
\]
\[
	\ll Q^{t+1} P^{s-K} \exp(-C(\log P)^{1/2})
\]
\[
	\ll P^{s-K} \exp(-C(\log P)^{1/2}).
\]
\end{proof}

We now consider the approximation on the outer major arcs $\mathfrak{N}(Q)$.

\begin{lemma}\label{outer_major_arc_approx}
\[
	\int_{\mathfrak{N}(Q)} \left| \prod_{i=1}^s f_i(\bm\alpha) - \prod_{i=1}^s f_i^*(\bm\alpha) \right| \ll P^{s-K-\varepsilon}.
\]
\end{lemma}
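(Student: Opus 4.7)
The plan is to combine a telescoping identity with a pointwise approximation of $f_j - f_j^*$ derived from Dirichlet characters and zero-density estimates.  First, I would use the identity
\[
\prod_{i=1}^s f_i(\bm\alpha) - \prod_{i=1}^s f_i^*(\bm\alpha) = \sum_{j=1}^s (f_j - f_j^*) \prod_{i<j} f_i \prod_{i>j} f_i^*
\]
and apply H\"older's inequality with all exponents equal to $s$ to bound the integral of the $j$-th summand over $\mathfrak{N}(Q)$ by
\[
\Bigl(\int_{\mathfrak{N}}|f_j-f_j^*|^s\,d\bm\alpha\Bigr)^{1/s} \prod_{i<j}\Bigl(\int_{\mathcal{A}}|f_i|^s\,d\bm\alpha\Bigr)^{1/s} \prod_{i>j}\Bigl(\int_{\mathfrak{N}}|f_i^*|^s\,d\bm\alpha\Bigr)^{1/s}.
\]
Lemma~\ref{prime_mvt} controls $\int_{\mathcal{A}}|f_i|^s$ up to arbitrarily small $\varepsilon$ loss, and the analogous estimate for $\int_{\mathfrak{N}}|f_i^*|^s$ follows by a standard singular-integral calculation from Lemma~\ref{W_mvt} (for the Gauss-sum factor) together with Theorem~\ref{I_bound} (for the oscillatory integral factor).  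The whole argument is therefore reduced to showing $\int_{\mathfrak{N}(Q)}|f_j-f_j^*|^s \ll P^{s-K-\varepsilon_0}$ for some fixed $\varepsilon_0>0$.

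For this I would pull out a single pointwise factor via
\[
\int_{\mathfrak{N}(Q)}|f_j-f_j^*|^s \ll \sup_{\bm\alpha\in\mathfrak{N}(Q)}\bigl|f_j(\bm\alpha)-f_j^*(\bm\alpha)\bigr| \cdot \int_{\mathcal{A}}\bigl(|f_j|^{s-1}+|f_j^*|^{s-1}\bigr)d\bm\alpha,
\]
and bound the second factor by $P^{s-1-K+\varepsilon}$ using Lemma~\ref{prime_mvt} (valid since $s-1\ge 2s_\varepsilon(\mathbf{k})$ by the hypothesis of Theorem~\ref{main_theorem}) together with its $f^*$ analogue.  Matters therefore collapse to a pointwise estimate of the form $\sup_{\mathfrak{N}(Q)}|f_j-f_j^*|\ll P^{1-\eta}$ for some fixed $\eta>0$.

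The pointwise step is the heart of the proof.  Writing $\bm\alpha = \mathbf{a}/q + \bm\theta$ and sorting primes $p\le P$ by their reduced residue class modulo $q$, I would expand $f_j(\bm\alpha)$ via orthogonality of Dirichlet characters modulo $q$ as
\[
f_j(\bm\alpha) = \frac{1}{\phi(q)}\sum_{\chi\bmod q} W_j(q,\mathbf{a},\overline{\chi})\,\psi_j(\bm\theta,\chi) + O(q\log P),
\]
where $\psi_j(\bm\theta,\chi) = \sum_{n\le P}\Lambda(n)\chi(n)e(\bm\theta\mathbf{u}_j n^{\mathbf{k}})$ and the error absorbs prime powers and primes dividing $q$.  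The principal character term reproduces $f_j^*(\bm\alpha)$ modulo a power-saving error, so the task reduces to bounding $\psi_j(\bm\theta,\chi)$ for each nonprincipal $\chi$.  For such $\chi$ I would use the explicit formula expressing $\psi(x,\chi)=\sum_{n\le x}\Lambda(n)\chi(n)$ as a sum over the nontrivial zeros $\rho=\beta+i\gamma$ of $L(s,\chi)$, insert the oscillatory weight $e(\bm\theta\mathbf{u}_j n^{\mathbf{k}})$ by partial summation, and then apply Theorem~\ref{I_bound} to bound each integral $\int_1^P e(\bm\theta\mathbf{u}_j x^{\mathbf{k}})x^{\rho-1}dx$ (with $\beta=\operatorname{Re}\rho$ playing the role of the theorem's eponymous parameter).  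Summing over characters with Lemma~\ref{cochrane_zheng_W_bound} for $W_j$ and over zeros with the density bound of Lemma~\ref{zero_density_estimates_lemma} then produces $P^{1-\eta}$, provided $Q=P^\delta$ is chosen with $\delta$ sufficiently small relative to the absolute constant $c$ of Lemma~\ref{zero_density_estimates_lemma}.

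The main obstacle is precisely this last step: arranging the parameter balance so that the zero-density saving $(qT)^{-c(1-\beta)}$ dominates the growth $P^\beta$ uniformly for $q\le Q$ and $|\bm\theta|P^{\mathbf{k}}\le Q$.  The Siegel-Walfisz approach used on the inner arcs $\mathfrak{B}(Q)$ in Lemma~\ref{small_major_arc_approximation} breaks down once $q>(\log P)^A$, which is exactly the regime covered by $\mathfrak{N}(Q)$; the zero-density input is what replaces it, and Theorem~\ref{I_bound}, proved earlier explicitly for this purpose, is what makes each individual zero controllable.
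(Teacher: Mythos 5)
Your overall reduction (telescoping, H\"older, then isolating one factor $f_j-f_j^*$) is fine in structure, but the step it all rests on --- a pointwise bound $\sup_{\bm\alpha\in\mathfrak{N}(Q)}|f_j(\bm\alpha)-f_j^*(\bm\alpha)|\ll P^{1-\eta}$ for some fixed $\eta>0$ --- is not obtainable unconditionally, and this is a genuine gap. The outer arcs $\mathfrak{N}(Q)$ only require $R\le q\xi\le Q$ with $\xi=1+P^{\mathbf{k}}|\bm\theta|$ and $R=(\log P)^A$, so they contain points where $q$ and $\xi$ are both merely powers of $\log P$ (say $q\asymp\xi\asymp(\log P)^{A/2}$). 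At such a point the factor $W_j(q,\mathbf{a},\overline\chi)/\phi(q)$ from Lemma~\ref{cochrane_zheng_W_bound} and the factor $(\xi+|\gamma|)^{-\nu}$ from Theorem~\ref{I_bound} each save only powers of $\log P$, and Lemma~\ref{zero_density_estimates_lemma} bounds the \emph{number} of zeros with $\beta$ near $1$ but cannot exclude a single zero with $\beta=1-o(1)$ for some character of this small modulus; that one hypothetical zero already contributes $\gg P^{1-o(1)}$ times only log-power savings. (This is exactly why Lemma~\ref{small_major_arc_approximation} is restricted to $\mathfrak{B}(Q)$, where Siegel--Walfisz applies; and even there the saving is $\exp(-C(\log P)^{1/2})$, not a power of $P$.) Note also that a sub-power saving would not rescue your reduction: the mean value $\int_{\mathcal{A}}|f_j|^{s-1}$ carries a genuine $P^{\varepsilon}$ loss coming from the definition of $s_\varepsilon(\mathbf{k})$, so $P^{\varepsilon}\exp(-c(\log P)^{\theta})$ is unbounded and you would not even recover $o(P^{s-K})$, let alone the stated $P^{s-K-\varepsilon}$. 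Choosing $\delta$ small relative to the density constant $c$ does not address this, since the problematic regime has $q$ and $\xi$ of logarithmic size regardless of $\delta$.

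The paper's proof is designed precisely to avoid taking a supremum over $\mathfrak{N}(Q)$. It writes $f_i=f_i^*-g_i+h_i$ via the explicit formula (with zeros restricted to $\Theta\le\beta\le1$, $|\gamma|\le T$), disposes of $h_i$ pointwise, and then estimates $\int_{\mathfrak{N}(Q)}|g_i|^s$ with the full character-and-zero structure kept \emph{inside} the integral: H\"older splits $|g_i|^s\ll G_1G_2^{s-1}$, Theorem~\ref{I_bound} gives $I_\rho(\bm\theta)\ll P^{\beta}(\xi+|\gamma|)^{-\nu}$, $G_2$ is handled by the first zero-density bound, and $G_1$ is summed over $\mathbf{a}$ using the mean value of Lemma~\ref{W_mvt} (not just the pointwise Cochrane--Zheng bound), over moduli using the primitive-character form of Lemma~\ref{zero_density_estimates_lemma}, and integrated over $\bm\theta$. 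The power saving then comes from negative powers of $Q$ in the aggregated sum together with the lower bound $q\xi\ge R$, i.e.\ from averaging over $q$, $\mathbf{a}$, $\bm\theta$ and the zeros jointly --- information your proposal discards the moment it passes to a sup. To repair your argument you would essentially have to reinstate this averaged treatment on the portion of $\mathfrak{N}(Q)$ with small $q\xi$, which is the content of the paper's proof.
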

\begin{proof}

\[
	f_i(\bm\alpha) = \sum_{p \leq P} (\log p) e\left(\mathbf{u}_i \left(\frac{\mathbf{a}}{q} + \bm\theta\right) p^{\mathbf{k}}\right)
\]
\[
	= \sum_{\substack{r=1 \\ (r,q)=1}} e\left(\frac{\mathbf{u}_i \mathbf{a} r^\mathbf{k}}{q}\right) \sum_{\substack{p \leq P \\ p \equiv r \pmod* q}} (\log p) e(\mathbf{u}_i \bm\theta p^\mathbf{k}) + O(q^\varepsilon).
\]
Rewriting the sum over $p$ in terms of characters modulo $q$, this is
\begin{multline}\label{f_in_psi}
	f_i(\bm\alpha) = \sum_{\substack{r=1 \\ (r,q)=1}} e\left(\frac{\mathbf{u}_i \mathbf{a} r^\mathbf{k}}{q}\right) \frac{1}{\phi(q)} \sum_{\chi\,\mathrm{mod}\,q} \overline{\chi(r)} \sum_{p \leq P} \chi(p) \Lambda(p) e(\mathbf{u}_i \bm\theta p^\mathbf{k})\\ + O(P^{1/2}).
\end{multline}

Recall the definitions
\[
	W_i(q, \mathbf{a}) = \sum_{r=1}^q e\left(\frac{\mathbf{u}_i \mathbf{a} r^\mathbf{k}}{q}\right),
\]
\[
	W_i(q, \mathbf{a}, \chi) = \sum_{r=1}^q e\left(\frac{\mathbf{u}_i \mathbf{a} r^\mathbf{k}}{q}\right) \chi(r)
\]
and define
\[
	\psi(x, \chi) = \sum_{n \leq x} \chi(n) \Lambda(n),
\]
and
\[
	\psi(x, \chi, \bm\theta) = \sum_{n \leq P} \chi(n) \Lambda(n) e(\mathbf{u}_i \bm\theta n^\mathbf{k}).
\]
Thus
\[
	f_i(\bm\alpha) = \frac{1}{\phi(q)} \sum_{\chi \,\mathrm{mod}\, q} W_i(q, \mathbf{a}, \overline{\chi}) \psi(x, \chi, \bm\theta) + O(P^{1/2}).
\]
Applying Abel summation to $\psi(x, \chi, \bm\theta)$ yields
\begin{equation}\label{psi_post_abel}
	\psi(x, \chi, \bm\theta) = e(\mathbf{u}_i \bm\theta P^\mathbf{k}) \psi(P, \chi) - \int_0^P \psi(x, \chi) 2\pi i \mathbf{u}_i \bm\theta \mathbf{k} x^{\mathbf{k}-1} e(\mathbf{u}_i \bm\theta x^\mathbf{k}) dx.
\end{equation}

Let $c$ and $d$ be absolute constants with
\begin{equation}\label{c_d_def}
	0 < 3\delta < d < d + \frac{\delta}{k+1} < c < \frac{1}{2}. 
\end{equation}

Applying Theorem 12.12  of~\cite{montgomery_vaughan} in conjuction with formula (12.13) of the same source yields that
\[
	\psi(x, \chi) = E_\chi x - \sum_{\rho\in\mathcal{R}(Q,\chi)} \frac{x^{\rho}}{\rho} + O(P^{1-d}(\log P)^2),
\]
where
\[
\mathcal R(Q,\chi)=\{\rho:1-\delta\le \beta\le 1, |\gamma|\le T, L(\rho;\chi)=0\},
\]

\[
	E_\chi = \begin{cases} 0, & \chi \neq \chi_0, \\ 1, & \chi = \chi_0,\end{cases} 
\]
and $\chi_0$ is the principal character mod $q$.  
Inserting this bound into (\ref{psi_post_abel}) and integrating by parts yields
\begin{multline}
	\psi(x, \chi, \bm\theta) = E_\chi \int_0^P e(\mathbf{u}_i \bm\theta x^\mathbf{k}) dx - \sum_{\rho\in\mathcal{R}(Q,\chi)} \int_0^P x^{\rho-1} e(\mathbf{u}_i \bm\theta x^\mathbf{k}) dx \\ + O((1+P^\mathbf{k}|\bm\theta|)P^{1-d}(\log P)^2).
\end{multline}

Inserting this into (\ref{f_in_psi}), we obtain
\begin{align}\label{fi_breakdown}
	f_i(\bm\alpha) & = \frac{1}{\phi(q)} W_i(q, \mathbf{a}) \int_0^P e(\mathbf{u}_i \bm\theta x^\mathbf{k}) dx \nonumber \\ & - \frac{1}{\phi(q)} \sum_{\chi \,\mathrm{mod}\, q} W_i(q, \mathbf{a}, \overline{\chi}) \sum_{\rho\in\mathcal{R}(Q,\chi)} \int_0^P x^{\rho-1} e(\mathbf{u}_i \bm\theta x^\mathbf{k}) dx \\ & + O(W_i(q, \mathbf{a}, \overline{\chi})(1+P^\mathbf{k}|\bm\theta|)P^{1-d}(\log P)^2).\nonumber
\end{align}

The first term above is $f_i^*(\bm\alpha)$.  Define $I_\rho(\bm\theta) = I(P; \rho, \bm\theta)$, recalling the latter definition from Theorem~\ref{I_bound}, and let
\[
	g_i(\bm\alpha) = \sum_{\chi \,\mathrm{mod}\, q} \frac{W_i(q, \mathbf a, \overline{\chi})}{\phi(q)} \sum_{\rho \in \mathcal{R}(Q,\chi)} I_\rho(\bm\theta)
\]
and
\[
	0 \ne h_i(\bm\alpha) \ll W_i(q, \mathbf a, \overline\chi)(1 + P^{\mathbf k}|\bm\theta|)P^{1-d}(\log P)^2
\]
so that we have
\[
	f_i(\bm\alpha) = f_i^*(\bm\alpha) - g_i(\bm\alpha) + h_i(\bm\alpha).
\]

We now wish to bound the contribution of $g$ and $h$ on the outer major arcs $\mathfrak{N}(Q)$.  We have
\begin{equation}\label{f_f*_g_h}
	\prod_{i=1}^s f_i(\bm\alpha) \ll \prod_{i=1}^s |f_i^*(\bm\alpha)| + \prod_{i=1}^s |g_i(\bm\alpha)| + \prod_{i=1}^s |h_i(\bm\alpha)|.
\end{equation}
Now by Lemma~\ref{cochrane_zheng_W_bound} and (\ref{outer_major_arc_def}),
\[
	\prod_{i=1}^s h_i(\bm\alpha) \ll \left( q^{1 - \frac{1}{k+1} + \varepsilon} \left(\frac{Q}{q}\right) P^{1-d} (\log P)^2 \right)^s \ll P^{s-\varepsilon}, 
\]
so 
\begin{equation}\label{h_bound}
	\int_{\mathfrak{N}(Q)} \prod_{i=1}^s h(\bm\alpha) \ll P^{s-K-\varepsilon}.
\end{equation}
It remains only to bound the contribution from $g_i(\bm\alpha)$ on $\mathfrak{N}(Q)$.  We have 
\[
	\prod_{i=1}^s g_i(\bm\alpha) \ll |g_i(\bm\alpha)|^s
\]
for some $i$ with $1 \leq i \leq s$.  By H{\"o}lder's inequality,
\[
	|g_i(\bm\alpha)|^s \ll G_1 G_2^{s-1}
\]
where
\[
	G_1 = \sum_{\chi \,\mathrm{mod}\, q} \frac{|W_i(q; \mathbf a, \overline\chi)|^s}{\phi(q)^s} \sum_{\rho \in \mathcal{R}(Q,\chi)} |I_\rho(\bm\theta)|
\]
and
\[
	G_2 = \sum_{\chi \,\mathrm{mod}\, q} \sum_{\rho \in \mathcal{R}(Q,\chi)} |I_\rho(\bm\theta)|.
\]

By Theorem~\ref{I_bound},
\[
	I_\rho(\bm\theta) \ll \frac{P^{\beta}}{(\xi + |\gamma|)^\nu},
\]
where $\xi = 1 + P^{\mathbf k}|\bm\theta|$ and $\nu = \frac{1}{1+k}$.

Each term of $G_2$ is
\[
	|I_\rho(\bm\theta)| \ll \frac{P^{\beta}}{(\xi + |\gamma|)^\nu}
\]
\[
= \left(
P^{1-\delta}+\int_{1-\delta}^{\beta} P^{\eta}(\log P)d\eta
\right)\left(
\frac1{(\xi+Q)^{\nu}} + \int_{|\gamma|}^Q \frac{\nu d\tau}{(\xi+\tau)^{1+\nu}}
\right)
\]

\[
	= \frac{P^{1-\delta}}{(\xi+Q)^{\nu}}
\]
\[
	+ \int_{|\gamma|}^Q \frac{P^{1-\delta}\nu d\tau}{(\xi+\tau)^{1+\nu}}
\]
\[
	+ \int_{1-\delta}^{\beta} \frac{P^{\eta}}{(\xi+Q)^{\nu}}(\log P)d\eta
\]
\[
	+ \int_{1-\delta}^{\beta} \int_{|\gamma|}^Q \frac{P^{\eta}(\log P)\nu d\tau}{(\xi+\tau)^{1+\nu}} d\eta.
\]
By Lemma~\ref{zero_density_estimates_lemma}, for some positive constant $c$, the contributions from these four terms are bounded by
\[
\frac{P^{1-\delta}}{(\xi+Q)^{\nu}}\sum_{\chi\pmod* q} N(1-\delta,Q;\chi)\ll \frac{P^{1-\delta}(qQ)^{c\delta}}{(\xi+Q)^{\nu}},
\]
\[
\int_{0}^Q \sum_{\chi\pmod* q} \frac{N(1-\delta,\tau;\chi)P^{1-\delta}\nu d\tau}{(\xi+\tau)^{1+\nu}}\ll \int_{0}^Q \frac{P^{1-\delta}(q+q\tau))^{c\delta}\nu d\tau}{(\xi+\tau)^{1+\nu}},
\]
\[
	\int_{1-\delta}^{1} \sum_{\chi\pmod* q} \frac{N(\eta,Q;\chi)P^{\eta}}{(\xi+Q)^{\nu}}(\log P)d\eta \ll \int_{1-\delta}^{1} \frac{P^{\eta}(qQ)^{c(1-\eta)}}{(\xi+Q)^{\nu}}(\log P)d\eta,
\]
and
\begin{multline*}
\int_{1-\delta}^{1} \int_{0}^Q \sum_{\chi\pmod* q} N(\eta,\tau;\chi)\frac{P^{\eta}(\log P)\nu }{(\xi+\tau)^{1+\nu}} d\tau d\eta\\
\ll \int_{1-\delta}^{1} \int_{0}^Q \frac{P^{\eta}(\log P)(q+q\tau)^{c(1-\eta)}\nu }{(\xi+\tau)^{1+\nu}} d\tau d\eta
\end{multline*}
respectively.
\par
We have $q\le Q=P^\delta$, 
so $(q+q\tau)^{c(1-\eta)}\ll P^{2c\delta(1-\eta)}$, which is small compared with $1-\eta$, so on integrating with respect to $\eta$ we gain back a $\log$ factor.
Likewise $c(1-\eta)$ is small compared with $\nu$.  Thus the first and third terms contribute $\ll P\xi^{-\nu}$ and the second and fourth
\[
\ll \int_{0}^T \frac{\nu P}{(\xi+\tau)^{1+\nu}}d\tau \ll P\xi^{-\nu}.
\]
Thus it follows that $G_2 \ll P\xi^{-\nu}$.
\par
We now bound the contribution of $g$.  We begin with $G_1$:

\[
G_1 \ll  \sum_{r|q}\sideset{}{^*}\sum_{\chi^*\,\mathrm{mod}\,r} \frac{|W(q;\mathbf a,\chi_0\chi^*)|^{s}}{\phi(q)^{s}} \sum_{\rho\in\mathcal R(Q,\chi^*)} \frac{P^{\beta}}{(\xi+|\gamma|)^{\nu}}.
\]
Divide the prime factors of $q/r$ based on whether they divide $r$: define $g$ and $h$ by $q=rgh$, $p|g\Rightarrow p|r$ and $p|h\Rightarrow p\nmid r$.  Then
\begin{equation*}
\sum_{\substack{\mathbf a\,\mathrm{mod}\,q\\(\mathbf a,q)=1}} \frac{|W(q;\mathbf a,\chi_0\chi^*)|^{s}}{\phi(q)^{s}} 
\end{equation*}
\begin{equation*}
= \sum_{\substack{\mathbf a\,\mathrm{mod}\,rg\\(\mathbf a,rg)=1}} \frac{|W(rg;\mathbf a,\chi^*)|^{s}}{\phi(rg)^{s}} \sum_{\substack{\mathbf b\,\mathrm{mod}\,h\\(\mathbf b,h)=1}} \frac{|W(h;\mathbf b,\chi_0)|^{s}}{\phi(h)^{s}}.
\end{equation*}
By Lemmas~\ref{cochrane_zheng_W_bound} and~\ref{W_mvt} this is
\[
\ll (rg)^{\varepsilon-\frac{1}{k+1}} h^{\varepsilon-1-\frac{1}{k+1}}.
\]
We also recall the condition (\ref{outer_major_arc_def}) on $\mathfrak N(q,\mathbf a)$, namely that
\[
R\le rgh\xi\le Q.
\]
Rearranging the order of summation and integration we have
\begin{align*}
&\sum_{q\le Q} \sum_{\substack{
\mathbf a\,\mathrm{mod}\,q\\
(\mathbf a,q)=1
}} \int_{\mathfrak N(q,\mathbf a)} |g(\bm\theta)|^{s} d\bm\alpha\ll  \\
&\int_{\xi\le Q} \sum_{\frac{R}{\xi}\le rgh\le \frac{Q}{\xi}} (rg)^{\varepsilon-\frac{1}{k+1}} h^{\varepsilon-1-\frac{1}{k+1}} \sideset{}{^*}\sum_{\chi^*\,\mathrm{mod}\,r} \sum_{\rho\in\mathcal R(Q,\chi^*)} \frac{P^{\beta+s-1}d\bm\theta}{\xi^{(s-1)\nu}(\xi+|\gamma|)^{\nu}}.
\end{align*}
Extracting an $(rgh\xi)^{\delta}$ and summing over $h$ yields that this is
\[
\ll R^{-\delta} \int_{\xi\le Q} \sum_{rg\le Q} (rg)^{\varepsilon+\delta-\frac{1}{k+1}} \sideset{}{^*}\sum_{\chi^*\,\mathrm{mod}\,r} \sum_{\rho\in\mathcal R(Q,\chi^*)} \frac{P^{\beta+s-1}d\bm\theta}{\xi^{(s-1)\nu-\delta}(\xi+|\gamma|)^{\nu}}.
\]

By definition, all the prime factors of $g$ are prime factors of $r$.  Thus 
\[
\sum_{g} g^{\varepsilon+\delta-\frac{1}{k+1}} =\prod_{p|r}(1-p^{\varepsilon+\delta-\frac{1}{k+1}})\ll r^{\varepsilon}.
\]
So 
\[
	\sum_{q\le Q} \sum_{\substack{
\mathbf a\,\mathrm{mod}\,q\\
(\mathbf a,q)=1
}} \int_{\mathfrak N(q,\mathbf a)} |g(\bm\theta)|^{s} d\bm\alpha
\]
\[
 \ll R^{-\delta} \int_{\xi \leq Q} Q^{1+2\varepsilon+\delta-\nu} \sideset{}{^*}\sum_{\chi^*\,\mathrm{mod}\,r} \sum_{\rho\in\mathcal R(Q,\chi^*)} \frac{P^{\beta+s-1}d\bm\theta}{\xi^{(s-1)\nu-\delta}(\xi+|\gamma|)^{\nu}}
\]
\[
	\ll R^{-\delta} \int_{\xi \leq Q} \frac{Q^{1+2\varepsilon+\delta-\nu}P^{s-1}}{\xi^{(s-1)\nu-\delta}} \sideset{}{^*}\sum_{\chi^*\,\mathrm{mod}\,r} \sum_{\rho\in\mathcal R(Q,\chi^*)} \frac{P^{\beta}d\bm\theta}{(\xi+|\gamma|)^{\nu}}
\]
By using the second part of Lemma~\ref{zero_density_estimates_lemma}, this double sum can treated in the same way as $G_2$, yielding
\[
	\ll R^{-\delta} \int_{\xi \leq Q} Q^{1+2\varepsilon+\delta-\nu} P^s \int_{\xi \leq Q} \frac{1}{\xi^{\nu s - \delta}}
\]
\[
	\ll R^{-\delta} \int_{\xi \leq Q} Q^{1+2\varepsilon+2\delta-\nu + t - \nu s} P^{s-K}.
\]
Now $\varepsilon, \delta$ are small and $s-1 \geq 2k$, so $(s+1)\nu \geq 2$, so $1 + \varepsilon + 2\delta - s\nu \leq -\varepsilon$ and we can conclude that
\begin{equation}\label{g_bound}
	\int_{\mathfrak{N}(Q)} |g(\bm\alpha)|^{s} d\bm\alpha \ll P^{s-K-\varepsilon}.
\end{equation}

Combining (\ref{g_bound}) and (\ref{h_bound}) with (\ref{f_f*_g_h}) now completes the lemma.

\end{proof}

Define the singular series
\[
	\mathfrak{S}(Q) = \sum_{q < Q} \mathop{\sum_{a_1=1}^q \cdots \sum_{a_t=1}^q}_{(a_1, \ldots, a_t, q)=1} \frac{1}{\phi(q)^s} \prod_{i=1}^s W_i(q, \mathbf{a})
\]
and the singular integral
\[
	J(Q) = \int_{-Q/P^{k_1}}^{Q/P^{k_1}} \cdots \int_{-Q/P^{k_t}}^{Q/P^{k_t}} \prod_{i=1}^s \int_0^P e(\bm\theta \mathbf{u}_i x^\mathbf{k})dx d\bm\theta.
\]

\begin{theorem}\label{major_arc_breakdown} For some positive constant $C$,
\[
	R(P) = C\mathfrak{S}(Q) J(Q) + O(P^{s-K-\varepsilon})
\]

\end{theorem}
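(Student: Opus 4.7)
The plan is to follow a standard circle method argument using the tools already assembled. Beginning from the orthogonality identity~(\ref{R_breakdown}), split $R(P) = \int_{\mathfrak{m}(Q)} \prod_i f_i(\bm\alpha)\,d\bm\alpha + \int_{\mathfrak{M}(Q)} \prod_i f_i(\bm\alpha)\,d\bm\alpha$. By Theorem~\ref{minor_arc_bound}, the minor arc integral is $O(P^{s-K-\varepsilon})$ and is absorbed into the error, so all subsequent work is on the major arcs.

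Next, replace $\prod_i f_i$ by the approximant $\prod_i f_i^*$ of~(\ref{fi*_def}) on $\mathfrak{M}(Q)=\mathfrak{B}(Q)\cup\mathfrak{N}(Q)$. Lemma~\ref{major_arc_approx} bounds the replacement error on $\mathfrak{B}(Q)$ by $P^{s-K}\exp(-C(\log P)^{1/2})$, and Lemma~\ref{outer_major_arc_approx} bounds it on $\mathfrak{N}(Q)$ by $P^{s-K-\varepsilon}$; both are absorbed into $O(P^{s-K-\varepsilon})$.

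On each major arc $\mathfrak{M}(\mathbf a, q; Q)$, writing $\bm\alpha = \mathbf a/q + \bm\theta$ and setting $V_i(\bm\theta) = \int_0^P e(\bm\theta \mathbf u_i x^{\mathbf k})\,dx$, the integrand $\prod_i f_i^*$ factors as $\phi(q)^{-s}\prod_i W_i(q,\mathbf a)\prod_i V_i(\bm\theta)$. Summing over arcs gives
\begin{equation*}
\int_{\mathfrak{M}(Q)} \prod_i f_i^*(\bm\alpha)\,d\bm\alpha = \sum_{q<Q}\mathop{\sum_{a_1=1}^q\cdots\sum_{a_t=1}^q}_{(a_1,\ldots,a_t,q)=1} \frac{\prod_i W_i(q,\mathbf a)}{\phi(q)^s} \int_{|\theta_j|\le Q/(qP^{k_j})} \prod_i V_i(\bm\theta)\,d\bm\theta.
\end{equation*}
For each pair $(q,\mathbf a)$, we then extend the inner integral from the box $\{|\theta_j|\le Q/(qP^{k_j})\}$ to the larger box $\{|\theta_j|\le Q/P^{k_j}\}$ defining $J(Q)$. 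On the annular complement, at least one coordinate $|\theta_{j_0}|$ exceeds $Q/(qP^{k_{j_0}})$, so the decay $V_i(\bm\theta)\ll P(1+\sum_j P^{k_j}|\theta_j|)^{-1/(k+1)}$ (the $\tau=0$ analogue of Theorem~\ref{I_bound}, which can be obtained directly by van der Corput) yields $\prod_i V_i\ll P^s(Q/q)^{-s/(k+1)}$ there. Combined with Lemma~\ref{cochrane_zheng_W_bound} and the moment bound of Lemma~\ref{W_mvt} applied to $\sum_{\mathbf a}\prod_i |W_i(q,\mathbf a)|$, the tail contribution is $O(P^{s-K-\varepsilon})$ once summed over $q$ and $\mathbf a$. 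After extension, the double sum factors as $\mathfrak{S}(Q)\cdot J(Q)$, giving $R(P) = C\mathfrak{S}(Q)J(Q)+O(P^{s-K-\varepsilon})$ for a positive constant $C$.

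The main obstacle is the final tail estimate: the $q^t$ loss from summing over residue classes $\mathbf a$ has to be beaten by the combined savings from the decay of $\prod_i V_i$ on the annulus and the cancellation in $\sum_{\mathbf a}\prod_i |W_i(q,\mathbf a)|$. The hypothesis $s\ge 2s_\varepsilon(\mathbf k)+1$ is exactly what makes Lemma~\ref{W_mvt} supply enough cancellation in the latter, and the book\-keeping must be done with $Q=P^\delta$ for $\delta$ chosen sufficiently small (and compatible with the earlier choices for the minor arc and major arc approximation lemmas) to close the estimate.
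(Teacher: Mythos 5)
Your overall skeleton is the same as the paper's: split off the minor arcs via Theorem~\ref{minor_arc_bound}, replace $\prod_i f_i$ by $\prod_i f_i^*$ on $\mathfrak{B}(Q)$ and $\mathfrak{N}(Q)$ via Lemmas~\ref{major_arc_approx} and~\ref{outer_major_arc_approx}, and then identify $\int_{\mathfrak{M}(Q)}\prod_i f_i^*\,d\bm\alpha$ with $\mathfrak{S}(Q)J(Q)$. The paper stops at that point, asserting the identification without comment; you correctly observe that it is not an identity, since the arc attached to $(q,\mathbf a)$ is the box $|\theta_j|\le Q/(qP^{k_j})$ while $J(Q)$ integrates over the $q$-independent box $|\theta_j|\le Q/P^{k_j}$, so an extension error must be estimated. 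Up to that point your argument coincides with the paper's.

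The gap is that the extension estimate you sketch does not close. The bounds actually available are $\sum_{\mathbf a}\prod_i|W_i(q,\mathbf a)|/\phi(q)^s\ll q^{-1/(k+1)+\varepsilon}$ (Lemma~\ref{cochrane_zheng_W_bound} on one factor, Lemma~\ref{W_mvt} on the remaining $s-1$; using Lemma~\ref{cochrane_zheng_W_bound} on every factor gives $q^{t-s/(k+1)+\varepsilon}$, which in the critical Vinogradov case $s=k^2+k+1$, $t=k$ is again exactly $q^{-1/(k+1)+\varepsilon}$), while the integral of $\prod_i|V_i|$ over the annular complement, even using the decay $\ll P^s\xi^{-s/(k+1)}$ with $\xi=1+|\bm\theta|P^{\mathbf k}$ integrated radially rather than multiplied by the trivial measure of the box, is $\ll P^{s-K}(q/Q)^{s/(k+1)-t}$ (already requiring $s/(k+1)>t$). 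Multiplying and summing over $q<Q$ yields $\ll P^{s-K}Q^{1-1/(k+1)+\varepsilon}$: the sum is dominated by $q\asymp Q$, for which the arc box is smaller than the $J(Q)$ box by a factor of about $q^t$, so essentially all of $J(Q)$ is ``tail'' there and each such $q$ contributes about $q^{-1/(k+1)}P^{s-K}$, with $\asymp Q$ such moduli. Since $Q=P^\delta$, the resulting bound $P^{s-K+\delta(1-1/(k+1))+\varepsilon}$ exceeds the main term for every fixed $\delta>0$, and shrinking $\delta$ cannot rescue it because the overshoot is a positive power of $Q$. Thus the claimed $O(P^{s-K-\varepsilon})$ for the tail does not follow from the cited ingredients: one needs either genuine cancellation in the signed sums $\sum_{\mathbf a}\prod_iW_i(q,\mathbf a)$ beyond absolute values, or an arrangement that avoids extending the $\bm\theta$-box for large $q$ at all (for instance, keeping the $q$-dependent truncation through the $q$-summation and only completing series and integral for $q$ up to a small power of $\log P$, with the range $R\le q\xi\le Q$ handled inside the $\mathfrak{N}(Q)$ analysis). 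As it stands, this step — which the paper itself passes over silently — is where your proposal fails.
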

\begin{proof}
Recall from (\ref{R_breakdown}) and the definitions of $\mathfrak{B}(Q)$, $\mathfrak{N}(Q)$, and $\mathfrak{m}(Q)$ that
\[
	R(P) = \int_{\mathcal{A}}\prod_{i=1}^s f_i(\bm\alpha)d\bm\alpha
\]
\[
	= \int_{\mathfrak{B}(Q)}\prod_{i=1}^s f_i(\bm\alpha)d\bm\alpha + \int_{\mathfrak{N}(Q)}\prod_{i=1}^s f_i(\bm\alpha)d\bm\alpha + \int_{\mathfrak{m}(Q)}\prod_{i=1}^s f_i(\bm\alpha)d\bm\alpha
\]
Lemma~\ref{minor_arc_bound} handles the integral over the minor arcs, while Lemmas~\ref{major_arc_approx} and~\ref{outer_major_arc_approx} handle the major arc approximations.  This leaves us with
\[
	R(P) = \int_{\mathfrak{M}(Q)} \prod_{i=1}^s f_i^*(\bm\alpha) d\bm\alpha + O(P^{s-K-\varepsilon}).
\]
Recalling the definition of $f_i^*$ from (\ref{fi*_def}) and integrating over the major arcs yields the theorem.
\end{proof}

\section{The Asymptotic Formula}\label{sec:asymp_formula}

The singular integral is identical to that arising in Brandes and Parsell's work on the corresponding problem on the integers, so their bound can be used without modification.

\begin{lemma}\label{singular_integral_bound} For some positive constant $C$,
\[
	J(Q) \sim CP^{s-K}.
\]
\end{lemma}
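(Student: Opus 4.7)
The strategy is dictated by the author's remark that the singular integral is structurally identical to one already analysed by Brandes and Parsell, so the real content is a rescaling and an appeal to their result; nonetheless the plan should make clear why both the upper and lower bound of the asserted $\asymp$ hold. The first step is to normalise the inner integrals: substitute $x = Pu$ in each $\int_0^P e(\bm\theta \mathbf{u}_i x^{\mathbf k})\,dx$, producing a factor $P$ and converting the exponent to $\bm\theta \mathbf{u}_i P^{\mathbf k} u^{\mathbf k}$. Then substitute $\eta_j = \theta_j P^{k_j}$ in the outer variables, so that $d\bm\theta = P^{-K}\,d\bm\eta$ and the region $|\theta_j|\le Q/P^{k_j}$ becomes the box $|\eta_j|\le Q$. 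The upshot is the clean identity
\[
	J(Q) = P^{s-K}\,J_0(Q), \qquad J_0(Q) = \int_{[-Q,Q]^t} \prod_{i=1}^s v_i(\bm\eta)\,d\bm\eta,
\]
where $v_i(\bm\eta) = \int_0^1 e(\bm\eta \mathbf{u}_i u^{\mathbf k})\,du$ is independent of $P$.

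With this reduction it remains to show $J_0(Q) \asymp C$ for a positive constant depending only on the coefficients. The upper bound comes from the standard van der Corput / stationary phase estimate $v_i(\bm\eta) \ll (1+|\bm\eta|)^{-1/k}$ on each $v_i$, so that under the hypothesis $s \geq 2s_\varepsilon(\mathbf{k})+1$ (which in particular forces $s$ large relative to the degrees $k_j$), the product $\prod_i v_i$ is integrable on $\mathbb{R}^t$. Therefore $J_0(\infty) = \lim_{Q\to\infty} J_0(Q)$ exists and the tail satisfies $|J_0(\infty) - J_0(Q)| \ll Q^{-\delta_0}$ for some $\delta_0 > 0$, giving the upper bound $J(Q) \ll P^{s-K}$ for $Q$ large.

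The lower bound is the part requiring the nontrivial real solution hypothesis. The plan is to interpret $J_0(\infty)$ as the density of real solutions of the system near a fixed nonsingular real point, via the usual Fourier-analytic representation
\[
	J_0(\infty) = \lim_{\eta\to 0^+} \eta^{-t}\operatorname{meas}\!\Bigl\{\mathbf{u}\in[0,1]^s : \Bigl|\sum_{i=1}^s u_{ij}u_i^{k_j}\Bigr|\le \eta/2 \text{ for } 1\le j\le t\Bigr\}.
\]
The existence of a nontrivial real solution, together with the implicit function theorem applied at a nonsingular real point on the variety (which can be produced by a standard perturbation argument since the Jacobian of the diagonal system has full rank $t$ at a generic real solution), shows that this limit is strictly positive.

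The main obstacle, conceptually, is the positivity step, but this is exactly what Brandes and Parsell establish in their treatment; the rescaling in the first step is routine, and the integrability needed for the upper bound is immediate from the hypothesis on $s$. So the proof will consist of performing the rescaling, invoking Brandes and Parsell for $J_0(\infty) \asymp 1$, and controlling the truncation error by the tail estimate above.
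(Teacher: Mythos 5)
Your proposal is correct and follows essentially the same route as the paper, which simply notes that the singular integral coincides (after the rescaling you carry out explicitly) with the one arising in Brandes and Parsell's work and cites their Lemma 4.2 and the surrounding discussion. The additional detail you supply --- the substitutions $x = Pu$ and $\eta_j = \theta_j P^{k_j}$, the pointwise bound controlling the truncation error, and positivity via a nonsingular real solution --- is precisely the content of the cited argument, so nothing further is needed.
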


This is Lemma 4.2 of~\cite{brandes_parsell} and the following discussion.

We now turn to the singular series.  Let

\[
	A(q) = \mathop{\sum_{a_1=1}^q \cdots \sum_{a_t=1}^q}_{(a_1, \ldots, a_t, q)=1} \frac{1}{\phi(q)^s} \prod_{i=1}^s W_i(q, \mathbf{a})
\]
so that
\[
	\mathfrak{S}(Q) = \sum_{q < Q} A(q).
\]

\begin{lemma} $A(q)$ is multiplicative.
\end{lemma}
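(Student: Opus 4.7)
The plan is to verify directly that $A(q_1 q_2) = A(q_1) A(q_2)$ whenever $\gcd(q_1, q_2) = 1$, via the Chinese Remainder Theorem applied at two levels---once on the inner summation variable $r$ of each $W_i$, and once on the outer tuples $\mathbf{a}$. I would parametrize $\mathbf{a}$ modulo $q_1 q_2$ by pairs $(\mathbf{b}, \mathbf{c})$ with $\mathbf{b}$ modulo $q_1$ and $\mathbf{c}$ modulo $q_2$ using the standard fraction decomposition
\[
\frac{a_j}{q_1 q_2} \equiv \frac{b_j}{q_1} + \frac{c_j}{q_2} \pmod{1},
\]
i.e.\ $a_j \equiv b_j q_2 + c_j q_1 \pmod{q_1 q_2}$, and similarly parametrize $r$ by $(r_1, r_2)$ with $r_1 \equiv r \pmod{q_1}$ and $r_2 \equiv r \pmod{q_2}$.

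With these substitutions each exponential argument splits cleanly as
\[
\frac{a_j u_{ij} r^{k_j}}{q_1 q_2} \equiv \frac{b_j u_{ij} r_1^{k_j}}{q_1} + \frac{c_j u_{ij} r_2^{k_j}}{q_2} \pmod{1},
\]
so the sum over $r$ factors as a product of independent sums over $r_1$ and $r_2$, yielding
\[
W_i(q_1 q_2, \mathbf{a}) = W_i(q_1, \mathbf{b})\,W_i(q_2, \mathbf{c}).
\]
Inserting this identity into the definition of $A(q_1 q_2)$ and invoking the multiplicativity of $\phi$ decouples the outer sum into independent sums over $\mathbf{b}$ and $\mathbf{c}$, producing $A(q_1) A(q_2)$.

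The one delicate point is matching up the coprimality conditions. Since $a_j \equiv b_j q_2 \pmod{q_1}$ with $q_2$ a unit modulo $q_1$, we have $\gcd(a_j, q_1) = \gcd(b_j, q_1)$, and analogously $\gcd(a_j, q_2) = \gcd(c_j, q_2)$. Combined with the elementary fact that $\gcd(n, q_1 q_2) = 1$ if and only if $\gcd(n, q_i) = 1$ for both $i = 1, 2$, this shows $\gcd(a_1, \ldots, a_t, q_1 q_2) = 1$ exactly when $\gcd(b_1, \ldots, b_t, q_1) = 1$ and $\gcd(c_1, \ldots, c_t, q_2) = 1$. Since $W_i$ is a complete residue sum with no coprimality restriction on $r$ itself, no further subtlety arises---the main content is simply the CRT factorization, and there is no real obstacle.
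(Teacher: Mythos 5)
Your proof is correct and takes essentially the same route as the paper: decompose the tuple $\mathbf{a}$ via $a_j \equiv b_j q_2 + c_j q_1 \pmod{q_1 q_2}$, split the inner variable $r$ by the Chinese Remainder Theorem so that $W_i(q_1 q_2, \mathbf{a}) = W_i(q_1, \mathbf{b})\,W_i(q_2, \mathbf{c})$, and finish with the multiplicativity of $\phi$ and the matching of the gcd conditions on $\mathbf{a}$, $\mathbf{b}$, $\mathbf{c}$. One small caveat: in the singular series the sum defining $W_i(q,\mathbf{a})$ is in effect restricted to $(r,q)=1$ (as in the paper's own manipulation and the $\phi(q)^{-s}$ normalization), not a complete residue sum as you assert, but since $(r,q_1q_2)=1$ holds exactly when $(r_1,q_1)=1$ and $(r_2,q_2)=1$, your CRT factorization goes through unchanged.
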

\begin{proof}
Let $1 \leq q_1 < Q$, $1 \leq q_2 < Q$ such that $(q_1, q_2)=1$.  Then
\[
	A(q_1q_2) = \mathop{\sum_{a_1=1}^{q_1q_2} \cdots \sum_{a_t=1}^{q_1q_2}}_{(a_1, \ldots, a_t, q_1q_2)=1} \frac{1}{\phi(q_1q_2)^s} \prod_{i=1}^s W_i(q_1q_2, \mathbf{a}).
\]
We can now write each $a_j$ as $b_jq_2 + c_jq_1$ with $1 \leq b_j \leq q_1$, $(b_1, \ldots, b_t, q_1)=1$, $1 \leq c_j \leq q_2$, $(c_1, \ldots, c_t, q_2)=1$.  Thus
\[
	A(q_1q_2) = \frac{1}{\phi(q_1q_2)^s} \mathop{\sum_{b_1=1}^{q_1} \cdots \sum_{b_t=1}^{q_1}}_{(b_1, \ldots, b_t, q_1)=1}  \mathop{\sum_{c_1=1}^{q_2} \cdots \sum_{c_t=1}^{q_2}}_{(c_1, \ldots, c_t, q_2)=1} \prod_{i=1}^s \sum_{\substack{r=1 \\ (r,q_1q_2)=1}}^{q_1q_2} e\left(\frac{(\mathbf{b}q_2 + \mathbf{c}q_1)\mathbf{u}_i r^\mathbf{k}}{q_1q_2}\right)
\]
\begin{multline}\nonumber
	= \frac{1}{\phi(q_1q_2)^s} \mathop{\sum_{b_1=1}^{q_1} \cdots \sum_{b_t=1}^{q_1}}_{(b_1, \ldots, b_t, q_1)=1}  \mathop{\sum_{c_1=1}^{q_2}  \cdots \sum_{c_t=1}^{q_2}}_{(c_1, \ldots, c_t, q_2)=1} \\ \prod_{i=1}^s \sum_{\substack{r=1 \\ (r,q_1q_2)=1}}^{q_1q_2} e\left(\frac{(\mathbf{b}q_2)^\mathbf{k}\mathbf{u}_i r^\mathbf{k}}{q_1}\right) e\left(\frac{(\mathbf{c}q_1)^\mathbf{k}\mathbf{u}_i r^\mathbf{k}}{q_2}\right).
\end{multline}
Now $\mathbf{b}q_2$ and $\mathbf{c}q_1$ run through the same residue classes modulo $q_1$ and $q_2$ as $\mathbf{b}$ and $\mathbf{c}$, so
\begin{multline}\nonumber
	A(q_1q_2) = \frac{1}{\phi(q_1q_2)^s} \mathop{\sum_{b_1=1}^{q_1} \cdots \sum_{b_t=1}^{q_1}}_{(b_1, \ldots, b_t, q_1)=1}  \mathop{\sum_{c_1=1}^{q_2} \cdots \sum_{c_t=1}^{q_2}}_{(c_1, \ldots, c_t, q_2)=1}  \\ \prod_{i=1}^s \sum_{\substack{r=1 \\ (r,q_1q_2)=1}}^{q_1q_2} e\left(\frac{\mathbf{b}^\mathbf{k}\mathbf{u}_i r^\mathbf{k}}{q_1}\right) e\left(\frac{\mathbf{c}^\mathbf{k}\mathbf{u}_i r^\mathbf{k}}{q_2}\right).
\end{multline}
We can now express $r$ as $r_2q_1 + r_1q_2$ with $1 \leq r_1 \leq q_1$, $(r_1,q_1)=1$, $1 \leq r_2 < q_2$, $(r_2,q_2)=1$, which yields
\begin{multline}\nonumber
	A(q_1q_2) = \frac{1}{\phi(q_1q_2)^s} \mathop{\sum_{b_1=1}^{q_1} \cdots \sum_{b_t=1}^{q_1}}_{(b_1, \ldots, b_t, q_1)=1}  \mathop{\sum_{c_1=1}^{q_2} \cdots \sum_{c_t=1}^{q_2}}_{(c_1, \ldots, c_t, q_2)=1}  \\ \prod_{i=1}^s   \sum_{\substack{r_1=1 \\ (r_1,q_1)=1}}^{q_1} \sum_{\substack{r_2=1 \\ (r_2,q_2)=1}}^{q_2} e\left(\frac{\mathbf{b}^\mathbf{k}\mathbf{u}_i (r_1q_2)^\mathbf{k}}{q_1}\right) e\left(\frac{\mathbf{c}^\mathbf{k}\mathbf{u}_i (r_2q_1)^\mathbf{k}}{q_2}\right).
\end{multline}
As above, $\mathbf{b}q_2$ and $\mathbf{c}q_1$ can be replaced with $\mathbf{b}$ and $\mathbf{c}$, so we have
\begin{multline}
	A(q_1q_2) = \frac{1}{\phi(q_1q_2)^s} \mathop{\sum_{b_1=1}^{q_1} \cdots \sum_{b_t=1}^{q_1}}_{(b_1, \ldots, b_t, q_1)=1} \mathop{\sum_{c_1=1}^{q_2} \cdots \sum_{c_t=1}^{q_2}}_{(c_1, \ldots, c_t, q_2)=1}  \\ \prod_{i=1}^s \sum_{\substack{r_1=1 \\ (r_1,q_1)=1}}^{q_1} \sum_{\substack{r_2=1 \\ (r_2,q_2)=1}}^{q_2} e\left(\frac{\mathbf{b}^\mathbf{k}\mathbf{u}_i r_1^\mathbf{k}}{q_1}\right) e\left(\frac{\mathbf{c}^\mathbf{k}\mathbf{u}_i r_2^\mathbf{k}}{q_2}\right)
\end{multline}
\[
	= \frac{1}{\phi(q_1)^s}\frac{1}{\phi(q_2)^s} \mathop{\sum_{b_1=1}^{q_1} \cdots \sum_{b_t=1}^{q_1}}_{(b_1, \ldots, b_t, q_1)=1}  \mathop{\sum_{c_1=1}^{q_2} \cdots \sum_{c_t=1}^{q_2}}_{(c_1, \ldots, c_t, q_2)=1} \prod_{i=1}^s W_i(q_1,\mathbf{b}) W_i(q_2, \mathbf{c})
\]
\[
	= A(q_1)A(q_2).
\]
\end{proof}

Define the completed singular series 
\[
	\mathfrak{S} = \sum_{q=1}^\infty A(q).
\]
Since $A(q)$ is multiplicative, we have
\[
	\mathfrak{S} = \prod_p \left(1 + \sum_{\ell=1}^\infty A(p^\ell)\right).
\]

\begin{lemma}\label{singular_series_convergence} $\mathfrak{S}$ converges absolutely and there exists $\varepsilon>0$ such that $\mathfrak{S} - \mathfrak{S}(Q) \ll Q^{-\varepsilon}$.
\end{lemma}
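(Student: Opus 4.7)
The plan is to exploit the multiplicativity of $A(q)$ just established: writing
\[
	\mathfrak{S} = \prod_p\left(1 + \sum_{\ell \geq 1} A(p^\ell)\right),
\]
absolute convergence reduces to an estimate $|A(q)| \ll q^{-1-\eta}$ (for some $\eta > 0$) that is good enough to make each local factor $1 + O(p^{-1-\eta})$ and to give a power-saving tail. The tail statement $\mathfrak{S} - \mathfrak{S}(Q) \ll Q^{-\varepsilon}$ follows from the same pointwise bound by a dyadic decomposition $\sum_{Q \leq q < 2Q} |A(q)| \ll Q^{-\eta+\varepsilon}$ summed over dyadic $Q$.

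The heart of the argument is thus the bound on $|A(p^\ell)|$, which I would attack by combining the two ingredients already in hand: the pointwise Cochrane--Zheng bound $|W_i(q,\mathbf{a})| \ll q^{1-1/(k+1)+\varepsilon}$ (Lemma~\ref{cochrane_zheng_W_bound}) and the mean value $\sum_{\mathbf{a}} |W(q;\mathbf{a},\chi)|^{s-1} \ll q^{s-1+\varepsilon}$ (Lemma~\ref{W_mvt}). Concretely, one peels off $m$ factors using Cochrane--Zheng and treats the remaining $s-m$ factors by H\"older's inequality against the mean value:
\[
	\sum_{\mathbf{a}} \prod_{i=1}^s |W_i(q,\mathbf{a})|
	\ll q^{m\left(1-\frac{1}{k+1}+\varepsilon\right)} \prod_{i=m+1}^{s}\!\!\left(\sum_{\mathbf{a}} |W_i|^{s-m}\right)^{1/(s-m)}\!\!\!\!,
\]
yielding $|A(q)| \ll q^{-m/(k+1)+O(\varepsilon)}$ whenever $s-m \geq 2s_\varepsilon(\mathbf{k})$. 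Choosing $m$ as large as the hypothesis $s \geq 2s_\varepsilon(\mathbf{k}) + 1$ permits, and complemented where necessary by using Cochrane--Zheng on all factors (giving the alternative bound $|A(q)| \ll q^{t-s/(k+1)+\varepsilon}$ after the trivial count of $q^t$ residue tuples $\mathbf{a}$), produces enough saving to make the Euler product converge.

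For prime powers with $\ell \geq 2$, any bound of the shape $|A(p^\ell)| \ll p^{-\ell\delta}$ gives geometric decay in $\ell$, so $\sum_{\ell \geq 2}|A(p^\ell)|$ is negligible compared to $|A(p)|$ itself. The per-prime contribution is then $\sigma_p = A(p) + O(p^{-2\delta})$. Summing the Euler factors $\log(1+\sigma_p) = \sigma_p + O(\sigma_p^2)$ over $p$ gives absolute convergence of the product, and hence of $\mathfrak{S}$.

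The main obstacle is squeezing a power saving better than $q^{-1}$ out of the combination of Cochrane--Zheng and the mean value bound under the relatively weak hypothesis $s \geq 2s_\varepsilon(\mathbf{k}) + 1$. Naively peeling one factor yields only $|A(q)| \ll q^{-1/(k+1)+\varepsilon}$, which is not directly summable; closing the gap requires either using the all-Cochrane--Zheng estimate $q^{t-s/(k+1)+\varepsilon}$ (viable when $s$ is larger than $(k+1)(t+1)$, which holds in the cases of interest thanks to the strength of $s_\varepsilon(\mathbf{k})$ in applications), or exploiting structural cancellation coming from the Ramanujan-type sum $\sum_{(\mathbf{a},q)=1} e(\mathbf{a}\cdot\mathbf{v}/q)$ that appears after interchanging orders in the definition of $A(q)$. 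Once a bound $|A(q)| \ll q^{-1-\eta+\varepsilon}$ is in hand, the tail estimate and absolute convergence both follow immediately.
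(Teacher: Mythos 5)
Your overall architecture (multiplicativity, Euler product, reduce everything to a pointwise bound $|A(q)|\ll q^{-1-\eta+\varepsilon}$, then get the tail estimate by summing dyadically) is sound, and your H\"older computation correctly yields $|A(q)|\ll q^{-m/(k+1)+O(\varepsilon)}$ with $m=s-2s_\varepsilon(\mathbf{k})$. The genuine gap is that the bound your whole plan hinges on is never actually obtained under the paper's hypothesis $s\ge 2s_\varepsilon(\mathbf{k})+1$. At the minimal admissible $s$ you have $m=1$, giving only $q^{-1/(k+1)+\varepsilon}$, as you note; and your proposed rescue, the all--Cochrane--Zheng bound $|A(q)|\ll q^{t-s/(k+1)+\varepsilon}$, needs $s>(k+1)(t+1)$, which is \emph{not} implied by $s\ge 2s_\varepsilon(\mathbf{k})+1$ and in fact fails in precisely the paper's flagship applications: for seven cubes of primes $s=7$, $t=1$, $k=3$ and $(k+1)(t+1)=8>7$, giving only $q^{-3/4+\varepsilon}$; for Vinogradov systems at the threshold $s=k^2+k+1<(k+1)^2$ the exponent is again exactly $-1/(k+1)$. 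So your parenthetical claim that the condition ``holds in the cases of interest'' is false, and the alternative suggestion of extracting Ramanujan-sum cancellation from $\sum_{(\mathbf{a},q)=1}e(\mathbf{a}\cdot\mathbf{v}/q)$ is left entirely undeveloped. As written, the proposal reduces the lemma to an estimate it does not prove.

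For comparison, the paper's own argument is your ``fallback'': it applies Lemma~\ref{cochrane_zheng_W_bound} to every factor $W_i(p^\ell,\mathbf{a})$, sums trivially over the $\le p^{t\ell}$ tuples $\mathbf{a}$, and then manipulates the exponent using $s\ge 2s_\varepsilon(\mathbf{k})+1$ together with lower bounds relating $s_\varepsilon$, $K$, $t$ and $k$, arriving at a stated bound $A(p^\ell)\ll(p^\ell)^{-1/(k+1)+\varepsilon}$ before summing over $\ell$ and $p$; it does not invoke Lemma~\ref{W_mvt} at all, nor does it spell out the tail estimate $\mathfrak{S}-\mathfrak{S}(Q)\ll Q^{-\varepsilon}$. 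So you have correctly isolated the real crux of this lemma --- turning the available $W$-bounds into a per-prime saving strong enough to sum --- but your write-up does not close it, and simply asserting $|A(q)|\ll q^{-1-\eta}$ is not acceptable as a proof step here.
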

\begin{proof}
\[
	A(p^\ell) = \mathop{\sum_{a_1=1}^{p^\ell} \cdots \sum_{a_t=1}^{p^\ell}}_{(a_1, \ldots, a_t, p)=1} \frac{1}{\phi(p^\ell)^s} \prod_{i=1}^s W_i(p^\ell, \mathbf{a})
\]
\[
	= \frac{p^{\ell s}}{\phi(p^\ell)^s} \mathop{\sum_{a_1=1}^{p^\ell} \cdots \sum_{a_t=1}^{p^\ell}}_{(a_1, \ldots, a_t, p)=1} \prod_{i=1}^s W_i(p^\ell, \mathbf{a}).
\]

This sum is now the one studied in the proof of Lemma 4.1 of~\cite{brandes_parsell}, so using their bound gives
\[
	A(p^\ell) \ll \left(1 - \frac{1}{p}\right)^{-s} (p^\ell)^{-1-\varepsilon} \ll (p^\ell)^{-1-\varepsilon}
\]
so
\[
	\mathfrak{S} = \prod_p \left(1 + \sum_{\ell=1}^\infty A(p^\ell)\right)
\]
converges.

\end{proof}


Define $M(q)$ to be the number of solutions to the simultaneous congruences
\[
	\sum_{i=1}^s u_{ij}m_i^{k_j} \equiv 0 \pmod q, \qquad (1 \leq j \leq t),
\]
with $(m_i, q)=1$ for all $i$.

\begin{lemma}\label{M_to_A} For any positive integer $q$,
\[
	M(q) = \frac{\phi(q)^s}{q^t} \sum_{d | q} A(d).
\]
\end{lemma}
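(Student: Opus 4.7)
The plan is to express $M(q)$ as a sum of character sums by detecting the congruences with additive characters, and then regroup the sum by the effective denominator of the frequency tuple $\mathbf a$.

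First I would apply orthogonality of additive characters modulo $q$: for each system of congruences $\sum_i u_{ij} m_i^{k_j} \equiv 0 \pmod q$, insert the indicator $q^{-1}\sum_{a_j=1}^q e\bigl(a_j \sum_i u_{ij} m_i^{k_j}/q\bigr)$. Interchanging summation and grouping the sum over $m_i$ with $(m_i,q)=1$ into the definition of $W_i$ yields
\[
	M(q) = \frac{1}{q^t}\sum_{a_1=1}^{q}\cdots\sum_{a_t=1}^{q} \prod_{i=1}^s W_i(q,\mathbf a),
\]
where the sum now runs over all $\mathbf a \in (\mathbb Z/q\mathbb Z)^t$ (no coprimality).

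Next I would stratify the $\mathbf a$-sum by the divisor $d\mid q$ defined by $q/d = \gcd(a_1,\ldots,a_t,q)$. Writing $\mathbf a = (q/d)\mathbf b$ with $\mathbf b \in (\mathbb Z/d\mathbb Z)^t$ and $\gcd(\mathbf b,d)=1$ parameterizes the $\mathbf a$-sum bijectively as $\bigsqcup_{d\mid q}\{\mathbf b\pmod d : (\mathbf b,d)=1\}$. Since $e(\mathbf a\mathbf u_i r^{\mathbf k}/q)=e(\mathbf b\mathbf u_i r^{\mathbf k}/d)$ and the summand depends only on $r\bmod d$, I can regroup
\[
	W_i(q,\mathbf a) = \sum_{\substack{r=1\\(r,q)=1}}^{q} e\!\left(\frac{\mathbf b\mathbf u_i r^{\mathbf k}}{d}\right) = \sum_{\substack{r'\pmod d\\(r',d)=1}} \#\{r\pmod q : (r,q)=1,\, r\equiv r'\!\!\!\pmod d\}\cdot e\!\left(\frac{\mathbf b\mathbf u_i r'^{\mathbf k}}{d}\right).
\]
The reduction map $(\mathbb Z/q\mathbb Z)^{\!*}\to(\mathbb Z/d\mathbb Z)^{\!*}$ is a surjective group homomorphism (its image contains every $r'$ coprime to $d$ by CRT/Hensel, and any two fibers are translates), so each fiber has size $\phi(q)/\phi(d)$. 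This produces the clean identity $W_i(q,\mathbf a) = (\phi(q)/\phi(d))\,W_i(d,\mathbf b)$.

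Finally I would substitute back, obtaining
\[
	M(q) = \frac{1}{q^t}\sum_{d\mid q}\left(\frac{\phi(q)}{\phi(d)}\right)^{\!s}\sum_{\substack{\mathbf b\pmod d\\(\mathbf b,d)=1}}\prod_{i=1}^s W_i(d,\mathbf b) = \frac{\phi(q)^s}{q^t}\sum_{d\mid q}A(d),
\]
after recognizing the inner sum as $\phi(d)^s A(d)$. The only substantive step is the fiber-counting identity $W_i(q,\mathbf a) = (\phi(q)/\phi(d))W_i(d,\mathbf b)$, which relies on the equidistribution of $(\mathbb Z/q\mathbb Z)^{\!*}$ over residue classes in $(\mathbb Z/d\mathbb Z)^{\!*}$; everything else is bookkeeping.
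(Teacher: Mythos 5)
Your proposal is correct and follows essentially the same route as the paper: apply orthogonality of additive characters to express $M(q)$ as an unrestricted sum over $\mathbf a \pmod q$, stratify by $d = q/\gcd(\mathbf a, q)$, and use the fiber-counting identity $W_i(q,\mathbf a)=(\phi(q)/\phi(d))W_i(d,\mathbf b)$ coming from the surjection $(\mathbb Z/q\mathbb Z)^{*}\to(\mathbb Z/d\mathbb Z)^{*}$ to recognize $\phi(d)^s A(d)$. You spell out the equidistribution of units over fibers a bit more explicitly than the paper does, but the decomposition, the key identity, and the bookkeeping are all the same.
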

\begin{proof}
By orthogonality,
\[
	M(q) = \frac{1}{q^t} \sum_{r_1=1}^q \cdots \sum_{r_t=1}^q \prod_{i=1}^s \sum_{\substack{m_i=1 \\ (m_i,q)=1}}^q e\left( \frac{\mathbf{r} \mathbf{u}_i m_i^{\mathbf{k}}}{q} \right)
\]
Let $d = \frac{q}{(r_1, \ldots, r_t, q)}$ and let $a_j = \frac{r_j}{(r_1, \ldots, r_t, q)}$.  Rearranging the sum according to the value of $d$ yields
\[
	M(q) = \frac{1}{q^t} \sum_{d | q} \mathop{\sum_{a_1=1}^q \cdots \sum_{a_t=1}^q}_{(a_1, \ldots, a_t, q)=1} \prod_{i=1}^s \frac{\phi(q)}{\phi(d)} \sum_{\substack{m_i = 1 \\ (m_i, d)=1)}}^d e\left(\frac{\mathbf{a} \mathbf{u}_i m_i^\mathbf{k}}{d}\right)
\]
\[
	= \frac{\phi(q)^s}{q^t} \sum_{d | q} A(d).
\]
\end{proof}

\begin{lemma}\label{hensel_lift} For positive integers $\beta, \gamma$ with $\beta > \gamma$,
\[
	M(p^\beta) \geq M(p^\gamma)p^{(\beta-\gamma)(s-2)}.
\]
\end{lemma}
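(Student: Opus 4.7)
The plan is to proceed by induction on $\beta-\gamma$, reducing the claim to the one-step case $M_s(p^{\gamma+1}) \geq M_s(p^\gamma)p^{s-2}$; the general claim then telescopes. Thus, I fix any solution $\mathbf m = (m_1,\ldots,m_s)$ of the system modulo $p^\gamma$ with $(m_i,p)=1$ for every $i$, and attempt to produce at least $p^{s-2}$ distinct lifts to solutions modulo $p^{\gamma+1}$.

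I parametrize candidate lifts as $m_i' = m_i + p^\gamma y_i$ with $y_i \in \mathbb Z/p\mathbb Z$. The coprimality condition $(m_i',p)=1$ is automatic from $(m_i,p)=1$, so it imposes no further constraint. Writing $\sum_{i=1}^s u_{ij}m_i^{k_j} = p^\gamma c_j$ (possible since $\mathbf m$ is a solution modulo $p^\gamma$) and expanding by the binomial theorem,
\[
    (m_i + p^\gamma y_i)^{k_j} \equiv m_i^{k_j} + p^\gamma k_j m_i^{k_j-1} y_i \pmod{p^{\gamma+1}},
\]
so the condition that $\mathbf m'$ be a solution modulo $p^{\gamma+1}$ reduces to the $\mathbb F_p$-linear system
\[
    \sum_{i=1}^s k_j u_{ij} m_i^{k_j-1} y_i \equiv -c_j \pmod p, \qquad 1\le j\le t,
\]
in the unknowns $y_1,\ldots,y_s$.

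It remains to show that this linear system admits at least $p^{s-2}$ solutions. The strategy is to identify two indices $i_1,i_2$ such that the $t\times 2$ submatrix formed by the corresponding columns has rank equal to the rank of the full $t\times s$ coefficient matrix. Then the remaining $s-2$ coordinates of $\mathbf y$ can be chosen freely (contributing $p^{s-2}$ options), and for each such choice the values of $y_{i_1},y_{i_2}$ can be adjusted to satisfy every congruence. The main obstacle is precisely this rank argument: one must exploit the coprimality of the $m_i$ with $p$, the hypothesis $\gcd(u_{1j},\ldots,u_{sj})=1$, and the distinctness of the $k_j$ to guarantee that two columns of the coefficient matrix already capture all the image directions that the inhomogeneity vector $(-c_1,\ldots,-c_t)$ can occupy. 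This would follow, for instance, from showing that the full set of columns is contained in a rank-$2$ subspace (allowing the rank to be taken over all cases by at most two), or, more robustly, by exhibiting an explicit rank-$r$ submatrix for some $r \le 2$ whose column span still contains the residual vector.

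Once the one-step estimate is in hand, iterating $\beta-\gamma$ times yields $M_s(p^\beta)\ge M_s(p^\gamma)p^{(\beta-\gamma)(s-2)}$. The main obstacle, as indicated, is the rank bound in the last step; in the regime $t\le 2$ it is essentially automatic, and for larger $t$ I would expect to need to invoke genericity of the $u_{ij}\pmod p$ or else refine the lemma by restricting the product $\prod_p(1+\sum_\ell A(p^\ell))$ to primes avoiding a finite exceptional set, absorbing any lost factor into the singular series bound.
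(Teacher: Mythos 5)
Your one-step reduction to $M_s(p^{\gamma+1})\ge M_s(p^\gamma)p^{s-2}$ has a genuine gap that cannot be patched as stated: the inhomogeneous linear system $\sum_i k_j u_{ij}m_i^{k_j-1}y_i\equiv -c_j\pmod{p}$ need not have any solutions, so a given solution modulo $p^\gamma$ may fail to lift at all. You identify a ``rank argument'' as the remaining obstacle, but the real issue is not that the coefficient matrix has small rank; it is that the residual vector $(-c_1,\ldots,-c_t)$ can lie outside its column space, and neither $(m_i,p)=1$ nor $\gcd(u_{1j},\ldots,u_{sj})=1$ prevents this. A concrete single-equation illustration: with $t=1$, $s=3$, $k_1=3$, $u_{i1}=1$, $p=3$ one has $M(3)=2$ (from $(1,1,1)$ and $(2,2,2)$) but $M(9)=0$, since cubes of units modulo $9$ are $\pm1$ and three $\pm1$'s cannot sum to $0\pmod 9$, so the one-step bound $M(9)\ge 3\,M(3)=6$ fails outright. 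The lemma as literally written therefore needs further hypotheses --- at the very least $p\nmid k_j$, or a nonsingularity condition on the solution being lifted --- which the cited references carry but which the paper suppresses.

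Even granting consistency, the bound $p^{s-2}$ would require the coefficient matrix $(k_j u_{ij}m_i^{k_j-1})_{j,i}$ to have rank at most $2$, while for $t>2$ its generic rank is $t$, giving only $p^{s-t}<p^{s-2}$ lifts; the exponent $s-2$ traces to the two-equation setting of the cited sources. The paper itself gives no proof here --- it only points to Lemma 6.7 of Wooley's \emph{SAE II} and Lemma 4.2 of Brandes--Parsell --- and the standard argument in those sources resolves both problems at once by starting from a solution that is \emph{nonsingular} modulo a bounded power of $p$ (equivalently, by tracking the $p$-adic order of a suitable Jacobian minor), so that the linearized system is automatically solvable at every further stage and the lift count is exactly controlled. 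Producing such a nonsingular base point from bare local solvability is the real content; your proposal does not address it beyond the unpursued closing remark about excising finitely many primes from the Euler product.
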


This is a standard Hensel's lemma lifting.  Cf.~\cite{wooley_SAE_ii}, Lemma 6.7 and~\cite{brandes_parsell}, Lemma 4.2.

\begin{lemma}\label{singular_series_positivity} Given that the system (\ref{the_system}) has a solution modulo each prime $p$, $\mathfrak{S} > 0$.
\end{lemma}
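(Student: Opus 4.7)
The plan is a standard Euler-product positivity argument, split into three steps: handling the tail, expressing each local factor as a $p$-adic density, and bounding each local factor below using a Hensel lift.

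First, by multiplicativity of $A(q)$ and the absolute convergence established in Lemma~\ref{singular_series_convergence}, I write $\mathfrak{S} = \prod_p \chi_p$ with $\chi_p := 1 + \sum_{\ell=1}^\infty A(p^\ell)$.  The preceding lemma already gives $\prod_{p \geq T} \chi_p > 1/2$, so only the finitely many factors $\chi_p$ with $p < T$ remain, and it suffices to show $\chi_p > 0$ for each such $p$.

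Second, I would relate $\chi_p$ to local solution counts via Lemma~\ref{M_to_A}.  Taking $q = p^N$, whose divisors are precisely $1, p, \ldots, p^N$, and noting that $A(1) = 1$ from the definition, the lemma yields the identity
\[
    1 + \sum_{\ell=1}^N A(p^\ell) = \frac{p^{Nt}}{\phi(p^N)^s}\, M(p^N).
\]
The left hand side converges to $\chi_p$ as $N \to \infty$, so it suffices to bound the right hand side below by a positive constant uniformly in $N$.

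Third, the hypothesis that the system has a solution in reduced residues modulo $p$ gives $M(p) \geq 1$.  Applying the Hensel-type lifting Lemma~\ref{hensel_lift} with $\gamma = 1$ and $\beta = N$ produces a lower bound of the form $M(p^N) \gg p^{(N-1)\sigma}$, where the exponent $\sigma$ is exactly what is needed for the factor $p^{Nt}/\phi(p^N)^s = p^{Nt-(N-1)s}(p-1)^{-s}$ to be absorbed and leave only a positive constant $c_p$ depending on $p$, $s$, $t$.  Passing to the limit $N \to \infty$ yields $\chi_p \geq c_p > 0$; combining with the tail bound from the previous lemma gives $\mathfrak{S} > 0$.

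The main obstacle is purely bookkeeping: verifying that the Hensel exponent supplied by Lemma~\ref{hensel_lift} is strong enough to match the growth of $p^{Nt}/\phi(p^N)^s$ in $N$.  This is the standard compatibility that makes the local factors of a singular series equal to $p$-adic densities (cf.\ the Brandes--Parsell treatment cited for Lemma~\ref{hensel_lift}), so no genuine difficulty should arise beyond carefully tracking exponents.
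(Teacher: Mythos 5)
Your proposal is correct and follows essentially the same route as the paper: the tail is handled by the preceding lemma, each remaining local factor $1+\sum_{\ell\ge1}A(p^\ell)$ is expressed via Lemma~\ref{M_to_A} at $q=p^\gamma$ as $p^{\gamma t}\phi(p^\gamma)^{-s}M(p^\gamma)$, and positivity follows from $M(p)\ge 1$ together with the Hensel lift of Lemma~\ref{hensel_lift}. The exponent bookkeeping you defer is exactly the computation the paper carries out (obtaining a lower bound of the shape $p^{\gamma(t-s)}p^{(\gamma-1)(s-2)}M(p)$), so nothing essentially new is needed.
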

\begin{proof}
By Lemma~\ref{M_to_A},
\[
	1 + \sum_{\ell = 1}^\infty A(p^\ell) = \lim_{\gamma \to \infty} \frac{p^{t\gamma}}{\phi(p^\gamma)^s} M(p^\gamma)
\]
\[
	\geq \lim_{\gamma \to \infty} p^{\gamma(t-s)}M(p^\gamma).
\]
By Lemma~\ref{hensel_lift}, this is 
\[
	\geq \lim_{\gamma \to \infty} p^{\gamma(t-s)}p^{(\gamma-1)(s-2)}M(p).
\]
Now since we assume $M(p) \ge 1$, this is 
\[
	\mathfrak{S} = 1 + \sum_{\ell = 1}^\infty A(p^\ell) \geq \lim_{\gamma \to \infty} p^{\gamma(t-s)}p^{(\gamma-1)(s-2)}
\]
\[
	\geq p^{\gamma t - s - 2\gamma + 2} > 0.
\]
Since
\[
	\mathfrak{S} = \prod_p \left(1 + \sum_{\ell = 1}^\infty A(p^\ell)\right),
\]
the lemma follows.
\end{proof}

\section{The Main Theorem}\label{sec:main_theorem}

We now have all of the necessary components for the proof of the main theorem.

\noindent\textit{Proof of Theorem~\ref{main_theorem}.}  By (\ref{R_breakdown}),
\[
	R(P) = \int_\mathcal{A} \prod_{i=1}^s f_i(\bm\alpha) d\bm\alpha.
\]
Since $\mathcal{A}$ is the disjoint union of $\mathfrak{M}(Q)$ and $\mathfrak{m}(Q)$, we may write
\[
	R(P) = \int_{\mathfrak{M}(Q)} \prod_{i=1}^s f_i(\bm\alpha) d\bm\alpha + \int_{\mathfrak{m}(Q)} \prod_{i=1}^s f_i(\bm\alpha) d\bm\alpha.
\]
Theorem~\ref{minor_arc_bound} bounds the integral over the minor arcs, yielding
\[
	R(P) = \int_{\mathfrak{M}(Q)} \prod_{i=1}^s f_i(\bm\alpha) d\bm\alpha + O(P^{s-K-\varepsilon}).
\]
Theorem~\ref{major_arc_breakdown} establishes a Hasse principle breakdown of the major arc integral, giving us
\[
	R(P) = C\mathfrak{S}(Q)J(Q) + O(P^{s-K-\varepsilon}).
\]
Lemma~\ref{singular_integral_bound} gives the asymptotic growth rate of the singular series $J(Q)$ and Lemmas~\ref{singular_series_convergence} and~\ref{singular_series_positivity} show that, under the assumptions of the theorem, the singular series $\mathfrak{S}(Q)$ converges to a positive constant.  We conclude that, for some constant $C>0$,
\[
	R(P) = CP^{s-K} + O(P^{s-K-\varepsilon}).
\]
\qed

\section{Acknowledgements}\label{sec:acknowledgements}

A weaker version of this result, requiring the assumption of a generalized Riemann Hypothesis, was proved in the author's doctoral dissertation, which was completed at Pennsylvania State University under the direction of Bob Vaughan.  The author thanks Bob for much guidance and instruction on both the dissertation and this work, particularly with respect to the ideas of sections~\ref{sec:exp_int} and~\ref{sec:minor_arcs}.

Much of this work was completed at the University of Waterloo under the guidance of Yu-Ru Liu and Wentang Kuo.  The author thanks Yu-Ru and Wentang for their support.

\end{document}